  \theoremstyle{definition}
    \newtheorem{para}{}[section]
    \newtheorem{definition}[para]{Definition}
    \newtheorem{example}[para]{Example}
  \theoremstyle{theorem}
    \newtheorem{lemma}[para]{Lemma}
    \newtheorem{theorem}[para]{Theorem}
    \newtheorem{proposition}[para]{Proposition}
\titleformat{\section}{\large\centering\bfseries}{\thesection}{1em}{}
\title{Duality for Constructive Modal Logics:\\ from Sahqlvist to Goldblatt-Thomason}
\author{%
  Jim de Groot$^1$, Ian Shillito$^2$ and Ranald Clouston$^3$ \\[1em]
  \normalsize{$^1$ Mathematical Institute, University of Bern, Bern, Switzerland}\\[-.2em]
  \normalsize{\texttt{jim.degroot@unibe.ch}} \\[.2em]
  \normalsize{$^2$ School of Computer Science, University of Birimingham, Birmingham, UK}\\[-.2em]
  \normalsize{\texttt{i.b.p.shillito@bham.ac.uk}} \\[.2em]
  \normalsize{$^3$ School of Computing, The Australian National University, Canberra, Australia}\\[-.2em]
  \normalsize{\texttt{ranald.clouston@anu.edu.au}}
  }
\date{}
\newcommand{\mc}[1]{\mathcal{#1}}
\newcommand{\ms}[1]{\mathscr{#1}}
\newcommand{\mo}[1]{\mathfrak{#1}}  
\newcommand{\alg}[1]{\mathscr{#1}}  
\renewcommand{\log}[1]{\mathsf{#1}}   
\newcommand{\ff}{\mathfrak}   
\renewcommand{\iff}{\quad\text{iff}\quad}
\renewcommand{\phi}{\varphi}
\newcommand{\Prop}{\mathrm{Prop}}
\newcommand{\Formulas}{\mathrm{Form}}
\newcommand{\subsetsim}{\mathrel{\substack{\textstyle\subset\\[-0.2ex]\textstyle\sim}}}
\newcommand{\SEG}{\mathrm{SEG}}
\newcommand{\false}{\mathord{\mathfrak{ff}}}
\newcommand{\ccc}[1]{\overline{#1}}
\newcommand{\cleq}{\eqslantless}
\newcommand{\cR}{\overline{R}}
\DeclareMathOperator{\upp}{up_{\expl}}
\DeclareMathOperator{\pf}{pf}
\newcommand{\deriv}[3]{{#2}\vdash_{\mathsf{#1}}{#3}}
\DeclareMathOperator{\st}{st}
\DeclareMathOperator{\so}{so}
\newcommand{\FOL}{\mathrm{FOL}}
\newcommand{\SOL}{\mathrm{SOL}}
\mathchardef\hyphen="2D
\newcommand{\ISUP}{\operatorname{ISUP}}
\newcommand{\POS}{\operatorname{POS}}
\newcommand{\BOXAT}{\operatorname{BOX\hyphen AT}}
\newcommand{\rto}[1]{to node[#1]{\footnotesize{$R$}}}
\newcommand{\ito}[1]{to node[#1]{\footnotesize{$\leq$}}}
\newcommand{\To}{\Rightarrow}
\newcommand{\Eta}{\bar{\eta}}
\newcommand{\Thet}{\bar{\theta}}
\newcommand{\CK}{\ensuremath{\mathsf{CK}}}
\newcommand{\CKAlg}{\mathsf{CKAlg}}
\newcommand{\CKAlgAx}[1]{\Alg #1}
\newcommand{\CKFrm}{\mathsf{CKFrm}}
\newcommand{\CKDescr}{\mathsf{CKDescr}}
\newcommand{\op}{\mathrm{op}}
\newcommand{\id}{\mathtt{id}}
\DeclareMathOperator{\Alg}{Alg}
\DeclareMathOperator{\seg}{\mathfrak{seg}}
\newcommand{\isup}{\mathsf{isup}}
\newcommand{\myitem}[1]{%
    \renewcommand{\labelenumi}{\textup{(\theenumi)}}
    \renewcommand{\theenumi}{#1}
    \item%
  }
\newcommand{\expl}{\text{\scalebox{.8}{\faBomb}}}
\DeclareMathOperator{\cto}{\mathrel{\kern1pt\dot\to\kern1pt}}
\DeclareMathOperator{\cor}{\mathrel{\kern.5pt\dot\lor\kern.5pt}}
\newcommand{\val}{v}
\renewcommand{\int}[1]{I^#1}
\newcommand{\LT}[1]{\mathbb{LT}^{\mathsf{#1}}}
\newcommand{\eqprv}[1]{\llbracket #1 \rrbracket}
\newcommand{\eqcla}[1]{\eqprv{\Formulas}^{#1}}
\newcommand{\pdt}[1]{\models^{\mathcal{#1}}}
\let\oldDiamond\Diamond
\renewcommand{\Diamond}{%
  \mathchoice{\raisebox{-.9pt}{$\displaystyle\oldDiamond$}}
             {\raisebox{-.9pt}{$\oldDiamond$}}
             {\raisebox{-0.5pt}{$\scriptstyle\oldDiamond$}}
             {\raisebox{-0.2pt}{$\scriptscriptstyle\oldDiamond$}}}
\newcommand{\dbox}{\mathord{\Box\kern-1.2ex\cdot\kern.55ex}}
\newcommand{\ddiamond}{\mathord{\Diamond\kern-1.25ex\cdot\kern.6ex}}
\begin{document}

\maketitle

\begin{abstract}
  \noindent
  We carry out a semantic study of the constructive modal logic $\log{CK}$.
  We provide a categorical duality linking the algebraic and birelational semantics of the logic.
  We then use this to prove Sahlqvist style correspondence and completeness
  results, as well as a Goldblatt-Thomason style
  theorem on definability of classes of frames.
\end{abstract}

\section{Introduction}

  The question of how best to define a basic intuitionistic version of modal logic, particularly in
  the presence of the sometimes omitted $\Diamond$ (possibility) operator, has received a
  wide range of differing answers~\cite{Fis84,Wij90,BeldePRit01,Koj12,BalGaoGenOli24}.
  A plausible minimal answer is Constructive $\log{K}$ (\CK)~\cite{BeldePRit01}, which can be
  defined by extending the usual axioms of intuitionsitic propositional logic with two further
  axioms
  \begin{enumerate}
    \setlength{\itemindent}{1em}
    \myitem{$\mathsf{K_{\Box}}$} \label{ax:Kb}
          $\Box(\phi \to \psi) \to (\Box \phi \to \Box \psi)$
    \myitem{$\mathsf{K_{\Diamond}}$} \label{ax:Kd}
          $\Box(\phi \to \psi) \to (\Diamond \phi \to \Diamond \psi)$
  \end{enumerate}
  where $\Box$ is the necessity operator, and by adding Modus Ponens and the usual necessitation
  rule of modal logic (if $\phi$ is a theorem, then so is $\Box\phi$).
  Alternatively, $\CK$ arises proof-theoretically from restricting a standard
  sequent calculus for the classical modal logic $\log{K}$ to single conclusions.

  Most competing notions of basic intuitionistic modal logic can
  be expressed as axiomatic extensions of \CK, but \CK\, is not too minimal to be given
  \emph{birelational} (i.e.~Kripke style) semantics~\cite{MendeP05}.%
  \footnote{A non-example of this is the logic of Bo\v{z}i\'{c} and Do\v{s}en~\cite{BozDos84},
  which is incomparable to \CK\, but has only been given \emph{trirelational} semantics in which
  $\Box$ and $\Diamond$ do not talk about the same relation.}
  It was recently shown by the authors of this paper~\cite{GroShiClo25} how to capture the most
  well known alternative notions of basic intuitionistic modal logic
  not just as axiomatic extensions of \CK,
  but by corresponding conditions on the birelational frames for \CK,
  creating a new semantic understanding of this zoo of competing logics.
  
  The logic $\CK$ and its extensions have applications ranging from
  knowledge representation~\cite{MendeP05}
  to various flavours of constructive epistemic logic~\cite{Wil92,ArtPro16,Pac24}
  to modelling parallel computation~\cite{Wij90,WijNer05} and evaluation~\cite{Pit91}.
  Besides, the $\Diamond$-free fragment of $\CK$,
  which can be axiomatised simply by dropping the \ref{ax:Kd} axiom,
  has been studied extensively.
  By contrast, the $\Diamond$-free fragment
  of the alternative basic intuitionistic modal logic Intuitionistic
  $\log{K}$~\cite{Fis84} is somewhat
  mysterious, with no known finite axiomatisation~\cite{Gre99,DasMar23}.

  In this paper, we continue the semantic study of $\log{CK}$.
  We begin in Section~\ref{Sec:MainConstructions} by introducing the main constructions of
  our algebraic and frame semantics, and in Section~\ref{sec:duality} we derive
  a categorical duality between the algebraic semantics of
  $\log{CK}$ and a suitable notion of \emph{descriptive} \CK-frames.
  We then use this duality to further study $\log{CK}$:
  In Section~\ref{Sec:Sahlqvist}
          we prove Sahlqvist style correspondence and completeness results
          for the logic. The latter theorem relies on the general
          completeness theorem of extensions of $\log{CK}$ with respect to
          a suitable class of descriptive \CK-frames, which follows
          immediately from the duality.
  Finally, in Section~\ref{Sec:GT} we prove a Goldblatt-Thomason style theorem,
          which describes when certain classes of \CK-frames are definable by formulas.
          This time we rely on the duality to transfer Birkhoff's variety theorem
          from algebras to \CK-frames.

\paragraph{Duality}
  In the field of (modal) logic, duality theorems are used to bridge the
  gap between the algebraic and geometric
  (frame-based) views of a logic.
  They can be used to prove completeness results, as well as (analogues of)
  Sahlqvist completeness theorems, bisimilarity-somewhere-else, and
  Goldblatt-Thomason style theorems.

  The study of dualities dates back to
  Stone's representation theorem for Boolean algebras~\cite{Sto36},
  which establishes a categorical duality between Boolean algebras
  and certain topological spaces (now called Stone spaces).
  Subsequent notable results include
  the McKinsey-Tarski representation theorem for closure algebras
  \cite{McKTar44}, the Priestley duality theorem \cite{Pri70} linking distributive
  lattices and spaces now known as Priestley spaces, and Esakia
  duality for Heyting algebras (the algebraic semantics of intuitionistic logic)~\cite{Esa74,Esa19}.

  Dualities for modal logics often build on a duality for the underlying
  propositional logic. The first such result was Goldblatt's duality for modal
  algebras~\cite{Gol76a,Gol76b}, which extends Stone duality.
  This establishes a duality between the algebraic semantics of normal
  modal logic on the one hand, and Kripke frames with extra structure on the
  other. The resulting frames are called \emph{descriptive} Kripke frames,
  and consist of a Kripke frame together with a collection of
  so-called \emph{admissible} subsets of the frame that satisfy certain
  conditions.
  Similar dualities have been derived for monotone modal
  logic~\cite{HanKup04} and neighbourhood logic~\cite{Dos89},
  where in each case the geometric side of the duality is given by
  frames for the logic with a suitable collection of admissible sets.
  
  Mirroring this, various dualities for modal extensions of positive logic
  were derived by
  extending Priestley duality~\cite{CelJan01,Gro21a,GroPat22-pmwi}.
  When working over intuitionistic logic, one often builds on
  Esakia duality~\cite{Pal04b,WolZak99,Gro22-phd}.
  Again, the geometric side of these dualities is given by frames of the logic
  with a collection of admissible subsets.

  But in the case of $\log{CK}$ something interesting happens: the duality
  between \CK-algebras and a descriptive analogue of \CK-frames cannot
  piggy-back on Esakia duality.
  The intuitive reason is that the birelational semantics for $\log{CK}$
  makes crucial use of the fact that the intuitionistic accessibility relation
  is a preorder, while Esakia duality requires a partial order.
  More precisely, the \CK-frame dual to a \CK-algebra is not based on a set
  of prime filters, but on a set of \emph{segments}. These are pairs
  $(\ff{p}, \Gamma)$ consisting of a prime filter $\ff{p}$ together with
  a set of prime filters $\Gamma$. The intuitionistic accessibility relation
  is then given by inclusion of the first argument, and since we can have
  different segments headed by the same prime filter, this gives rise to a
  preorder but not a partial order.
  Hence the duality of this paper is not merely a mathematical tool for further study of the logic,
  but also has interesting features in its own right.%
  \footnote{It would still be possible to piggy-back on Esakia if we used a
            different semantics, for example trirelational frames.
            Since we insist on using the birelational \CK-frames
            we disregard such alternatives.}

\paragraph{Sahlqvist correspondence and completeness}
  After the introduction of the Kripke semantics for classical modal logic,
  it was rapidly noticed that some modal axioms, such as~$\Box p\to p$ and $\Box p\to\Box\Box p$,
  correspond to properties on frames, e.g.~reflexivity and transitivity. 
  These insights led the community to wonder about the existence
  of a general theory of correspondence.
  
  In 1975, Sahlqvist gathered the results from his master's thesis
  into what constitutes the best-to-date attempt at providing such
  a theory~\cite{Sah75}.
  While modal formulas naturally correspond to second-order frame properties,
  Sahlqvist's work syntactically characterises a large class of modal formulas
  which correspond to computable \emph{first-order} frame properties.
  Moreover, the addition of any subset $\Lambda$ of formulas of this class to the
  axioms of the classical modal logic $\log{K}$ yields a logic that is complete
  with respect to the class of frames captured by the frame properties
  corresponding to $\Lambda$. 
  The first result, which is known as the ``Sahlqvist correspondence theorem'',
  was proved independently by Van Benthem~\cite{Ben75},
  while the second, also called ``Sahlqvist completeness theorem'',
  was given a simplified proof via duality by Sambin and Vaccaro~\cite{SamVac89}.
  
  While Sahlqvist-like results have been proved for other classes of
  logics~\cite{CelJan99,Han03,GerNagVen05,ConPal12,ForMor23},
  none currently exists with \CK\, as a basis.
  For example, while results for distributive modal logic~\cite{GerNagVen05,ConPal12}
  can be extended to some intuitionistic modal logics, in that logic
  $\Diamond\bot$ entails $\bot$, and $\Diamond(p\lor q)$ entails $\Diamond p\lor \Diamond q$,
  which are not valid in \CK.
  In this paper we define a class of formulas for which
  we establish correspondence and completeness results. 
  The class of formulas we capture is more restricted
  than the classical one, as we notably forbid the presence of
  diamonds in the antecedent of implications.
  Despite this restriction, many important axioms fall under
  the scope of our definition, including
  the two intuitionistic versions of the $T$ axiom ($\Box p \to p$ and $p \to \Diamond p$),
  the $\Box$ version of the $4$ axiom ($\Box p \to \Box\Box p$),
  and the seriality axiom ($\Box p \to \Diamond p$).

\paragraph{Definability via a Goldblatt-Thomason theorem}
  A prominent question in the study of (modal) logics and their semantics
  is what classes of frames can be defined as the class of frames satisfying
  some set of formulas.
  Such classes are often called \emph{axiomatic} or \emph{modally definable}.
  In the context of classical normal modal logic, a partial answer to this question
  was given by Goldblatt and Thomason~\cite{GolTho74},
  who proved that an elementary class $\ms{K}$ of Kripke frames is axiomatic if and only if
  it reflects ultrafilter extensions and is closed under p-morphic images,
  generated subframes and disjoint unions.
  Instead of assuming $\ms{K}$ to be axiomatic, it suffices to assume
  that it is closed under ultrafilter extensions.
  The proof in~\cite{GolTho74} essentially dualises Birkhoff's variety
  theorem~\cite{Bir35}, using ultrafilter extensions and the duality for
  modal algebras to bridge the gap between the algebraic semantics and
  Kripke frames.
  A model-theoretic proof was provided almost twenty years later by
  Van Benthem~\cite{Ben93}.

  A similar result for intuitionistic logic (without modal operators)
  was proven by Rodenburg \cite{Rod86} (see also \cite{Gol05}),
  where the interpreting structures are \emph{intuitionistic} Kripke frames and models.
  While this uses the same notion of p-morphic images, generated subframes
  and disjoint unions, it replaces ultrafilter extensions with
  so-called prime filter extensions.
  More recently, Goldblatt-Thomason style theorems for many other logics have
  been proven, including for positive normal modal logic~\cite{CelJan99},
  graded modal logic \cite{SanMa10},
  modal extensions of {\L}ukasiewicz finitely-valued logics~\cite{Teh16,BadCaiNog23},
  and modal logics with a universal modality \cite{SanVir19}.
  General approaches to such theorems for coalgebraic and dialgebraic logics
  were given in~\cite{KurRos07,Gro22gt} and~\cite[Section~11]{Gro22-phd}.
  
  In this paper we derive an analogue of the Goldblatt-Thomason theorem
  for $\log{CK}$. 
  Since our duality for $\log{CK}$ is not based on Esakia duality,
  the general approach from~\cite{Gro22gt} is not applicable. 
  But this does not stop us: using the proof idea from~\cite{GolTho74},
  we can still prove the desired analogue.
  This requires two interesting modifications of the original result.
  First, we modify the definition of the disjoint union of a family of frames.
  Since our frames are equipped with an inconsistent world, the disjoint
  union should identify all these worlds. (From a categorical point of view
  nothing changes, because the resulting construction is still given by
  the coproduct in the category of frames and appropriate morphisms.)
  Second, we replace ultrafilter extensions with \emph{segment extensions}.
  These provide the bridge between \CK-frames and their descriptive counterparts,
  and together with the duality theorem allow us to transfer Birkhoff's
  variety theorem to the class of frames.

\

\emph{The third author, Ranald Clouston, would like to express his profound gratitude towards Rob Goldblatt,
first for teaching him mathematical logic, then for supervising his first ever research project.
Rob's brilliant clarity of communication and gentle, supportive manner made him an ideal first mentor.
This paper is dedicated to Rob.}

\section{Algebras and Frames}\label{Sec:MainConstructions}

In this section we introduce our basic logic (Subsection~\ref{subsec:CK}),
its abstract algebraic semantics (Subsection~\ref{subsec:algsem}),
and its birelational frame semantics (Subsection~\ref{subsec:frames}).
We also define the notion of bounded morphism between frames and models,
which allows us to state a proposition regarding the closure of
axiomatically defined classed of frames.

\subsection{Constructive $\log{K}$}\label{subsec:CK}

\begin{definition}
Constructive $\log{K}$ (\CK) is the logic whose set of formulas $\Formulas$ is defined by the
grammar 
\begin{equation*}
    \varphi ::= p \mid \bot \mid \varphi \land \varphi \mid \varphi
    \lor \varphi \mid \varphi \rightarrow \varphi \mid \Box\varphi \mid \Diamond\varphi
\end{equation*}
where $p$ is drawn from a set $\Prop$ of propositional variables.
We may define $\neg\phi$ as $\phi\rightarrow\bot$ and $\top$ as $\neg\bot$.

Where $\Gamma$ is a set of formulas and $\phi$ is a formula, we define 
the consequence relation $\deriv{}{\Gamma}{\phi}$ by extending the usual axioms and rules
of intuitionistic propositional logic with axioms \ref{ax:Kb} and \ref{ax:Kd}, and the rule of
necessitation. For a more careful description of consequence, and some of its properties,
see De Groot et al.~\cite[Section III]{GroShiClo25}.

If $\mathsf{Ax}$ is a set of formulas, or abusing notation, a single formula, we write
$\log{CK}\oplus \mathsf{Ax}$ for the logic that extends \CK\, by permitting all substitution
instances of formulas in $\mathsf{Ax}$ to be used as axioms. We additionally note $\Gamma\vdash_{\mathsf{Ax}}\phi$ for the consequence relation of $\log{CK}\oplus \mathsf{Ax}$.
\end{definition}

For interesting examples of such
axiomatic extensions of \CK, see De Groot et al.~\cite{GroShiClo25}, and Section~\ref{subsec:Sahlqvist} of this paper.

\subsection{Algebraic Semantics}\label{subsec:algsem}

As the modal logics we scrutinise are intuitionistic,
the algebraic structures they correspond to are \emph{Heyting algebras}
with operators.

\begin{definition}\label{def:CK-alg}
  A \CK-algebra $\alg{A}$ is a Heyting algebra $(A,\top,\bot,\land,\lor,\to)$ with unary operators
  $\Box, \Diamond : A \to A$ satisfying
  \begin{equation*}
    \Box\top = \top, \qquad
    \Box a \wedge \Box b = \Box(a \wedge b), \qquad
    \Diamond a \leq \Diamond(a \vee b), \qquad
    \Box a \wedge \Diamond b \leq \Diamond(a \wedge b)
  \end{equation*}
  We denote the class of \CK-algebras by $\CKAlg$.
  A \emph{homomorphism} $\alg{A} \to \alg{B}$ of \CK-algebras is a function
  $A \to B$ on the underlying sets that preserves all structure. 
\end{definition}

  This definition of \CK-algebras abuses notation in the usual way, by using the same symbols as
  those of the logic.
  We exploit this in our definition of \emph{interpretation}:

\begin{definition}\label{def:algsem}
  Let $\alg{A}$ be a \CK-algebra.
  A \emph{valuation} on $\alg{A}$ is a function $\val:\Prop\to A$.
  Given such a valuation, and a formula $\phi$,
  we recursively define the \emph{interpretation $\int{\val}(\phi)$ of $\phi$ 
  in $\alg{A}$ via $\val$} 
  as follows, where $\star\in\{\land,\lor,\to\}$ and $\heartsuit\in\{\Box,\Diamond\}$: 
  \begin{center}
  \begin{tabular}{l c l @{\hspace{2cm}} l c l}
  $\int{\val}(p)$ & $:=$ & $\val(p)$ & $\int{\val}(\psi \star \chi)$ & $:=$ & $\int{\val}(\psi)\star\int{\val}(\chi)$ \\
  $\int{\val}(\bot)$ & $:=$ & $\bot$ &  $\int{\val}(\heartsuit\psi)$ & $:=$ & $\heartsuit\int{\val}(\psi)$ \\
  \end{tabular}
  \end{center}
\end{definition}

\begin{definition}\label{def:alg_satisfy}
  Given a formula $\phi$, we say that $\alg{A}$ \emph{satisfies} $\phi$, and write
  $\alg{A}\models\phi$, if $\int{\val}{(\phi)}=\top$ for all valuations $\val$.
  If $\Phi \subseteq \Formulas$ then we write $\alg{A} \models \Phi$ if
  $\alg{A} \models \phi$ for all $\phi \in \Phi$.
  We write $ \Alg \Phi$ for $\{ \alg{A} \in \CKAlg \mid \alg{A} \models \Phi \}$, the collection
  of \CK-algebras satisfying $\Phi$.
  We say that a class $\ms{C} \subseteq \CKAlg$ is \emph{axiomatic} if $\ms{C} = \Alg \Phi$
  for some collection $\Phi$ of formulas.  
\end{definition}

\begin{definition}\label{def:ptd}
  Let $\mathcal{C} \subseteq \CKAlg$ of algebras
  (or, by abuse of notation, a single \CK-algebra).
  For a set $\Lambda$ of formulas, write $\Lambda' \subseteq_f \Lambda$ if
  $\Lambda'$ is a finite subset of $\Lambda$.
  The \emph{logic preserving degrees of truth} $\pdt{C}$ is defined by
  \begin{equation}\label{eq:pdt}
    \Lambda\pdt{C}\phi
      \;\; := \;\; \exists \Lambda' \subseteq_f \Lambda .\,
         \forall \alg{A} \in \mathcal{C} . \,
         \forall \val. \,
         \forall a. \;\; 
           \text{if }\;(\forall \psi \in \Lambda'. \;a \leq \int{\val}(\psi))
         \;\text{ then }\;a \leq \int{\val}(\phi)
  \end{equation}
\end{definition}

  Note that The use of a finite $\Lambda'$ builds in the compactness of the logic,
  following Moraschini~\cite[Remark 2.4]{Mor24}.
  If we did not restrict to a finite subset, we would need to work with
  the \emph{canonical extension} of the Lindenbaum Tarski algebra, defined below, 
  which has infinite meets~\cite{GerHar01}.

Our intention is now to show that for any set of axioms $\mathsf{Ax}$,
the logic $\log{CK}\oplus \mathsf{Ax}$ is exactly
the logic preserving degrees of truth $\pdt{\CKAlgAx{Ax}}$.
We establish this fact via a standard argument involving Lindenbaum-Tarski
algebras, which are defined over equivalence classes of formulas.

\begin{definition}
  Let $\mathsf{Ax} \cup \{ \phi, \psi \} \cup \Lambda \subseteq \Formulas$.
  We say that $\phi$ and $\psi$ are \emph{$\mathsf{Ax}$-equivalent} if
  $\deriv{Ax}{}{\phi \leftrightarrow \psi}$.
  The set of formulas $\mathsf{Ax}$-equivalent to
  $\phi$ is denoted by $\eqprv{\phi}^{\mathsf{Ax}}$.
  The class of all sets $\eqprv{\phi}^{\mathsf{Ax}}$, where $\phi$ ranges over
  $\Formulas$, is denoted $\eqcla{\mathsf{Ax}}$.
\end{definition}

  When there is no danger of confusion we drop the superscript $\mathsf{Ax}$
  and simply write $\eqprv{\phi}$ and $\eqcla{}$. In particular, for the 
  remainder of this section we develop our results for a 
  fixed but arbitrary set of axioms $\mathsf{Ax}$.

\begin{definition}[Lindenbaum-Tarski algebra]\label{def:LTalg}
We define $\LT{Ax}=(\eqcla{\mathsf{Ax}},\top,\bot,\land,\lor,\to,\Box,\Diamond)$
to be the \CK-algebra with the operators defined below,
where $\star\in\{\land,\lor,\to\}$ and $\heartsuit \in\{\Box,\Diamond\}$.
\begin{center}
\begin{tabular}{c @{\hspace{1.5cm}}c @{\hspace{1.5cm}} c @{\hspace{1.5cm}} c}
$\top:=\eqprv{\top}$ &
$\bot:=\eqprv{\bot}$ &
$\eqprv{\phi}\star\,\eqprv{\psi}:=\eqprv{\phi\star\psi}$ &
$\heartsuit\eqprv{\phi} := \eqprv{\heartsuit\phi}$ \\
\end{tabular}
\end{center}
\end{definition}

A crucial property of the Lindenbaum-Tarski algebra $\LT{}$ is that
formulas are interpreted as their equivalence class under the \emph{canonical valuation}
of the next lemma.

\begin{lemma}\label{lem:algtrulem}
Let the canonical valuation $\val$ be defined as $\val(p)=\eqprv{p}$.
Then $\int{\val}_{\scriptscriptstyle\LT{}}(\phi)=\eqprv{\phi}$.
\end{lemma}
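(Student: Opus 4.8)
The plan is to prove this by structural induction on the formula $\phi$, following the standard Lindenbaum-Tarski argument. The statement $\int{\val}_{\scriptscriptstyle\LT{}}(\phi)=\eqprv{\phi}$ says that the algebraic interpretation in the Lindenbaum-Tarski algebra, using the canonical valuation $\val(p)=\eqprv{p}$, sends each formula to its own equivalence class. The key observation that makes the induction work is that the operations on $\LT{}$ were \emph{defined} in Definition~\ref{def:LTalg} precisely so that they commute with the bracket map: $\eqprv{\phi}\star\eqprv{\psi}=\eqprv{\phi\star\psi}$ and $\heartsuit\eqprv{\phi}=\eqprv{\heartsuit\phi}$. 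So each inductive step amounts to matching the recursive clause of Definition~\ref{def:algsem} against the corresponding clause of Definition~\ref{def:LTalg}.

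First I would handle the base cases. For a propositional variable $p$, we have $\int{\val}_{\scriptscriptstyle\LT{}}(p)=\val(p)=\eqprv{p}$ directly from the definition of interpretation and the choice of canonical valuation. For $\bot$, the interpretation clause gives $\int{\val}_{\scriptscriptstyle\LT{}}(\bot)=\bot$, and in $\LT{}$ the bottom element is $\eqprv{\bot}$ by Definition~\ref{def:LTalg}, so the two agree. Then for the inductive steps, suppose the claim holds for $\psi$ and $\chi$. For a binary connective $\star\in\{\land,\lor,\to\}$, the interpretation clause gives $\int{\val}_{\scriptscriptstyle\LT{}}(\psi\star\chi)=\int{\val}_{\scriptscriptstyle\LT{}}(\psi)\star\int{\val}_{\scriptscriptstyle\LT{}}(\chi)$, which by the induction hypothesis equals $\eqprv{\psi}\star\eqprv{\chi}$, and this is $\eqprv{\psi\star\chi}$ by the algebra operations in Definition~\ref{def:LTalg}. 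The modal cases $\heartsuit\in\{\Box,\Diamond\}$ are identical in form: $\int{\val}_{\scriptscriptstyle\LT{}}(\heartsuit\psi)=\heartsuit\int{\val}_{\scriptscriptstyle\LT{}}(\psi)=\heartsuit\eqprv{\psi}=\eqprv{\heartsuit\psi}$.

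The only subtlety, and the step I would be most careful about, is not the induction itself but the implicit well-definedness underlying it. The operations in Definition~\ref{def:LTalg} are defined on representatives, so one must know that they do not depend on the chosen representative; that is, if $\deriv{Ax}{}{\phi_1\leftrightarrow\phi_2}$ and $\deriv{Ax}{}{\psi_1\leftrightarrow\psi_2}$, then $\deriv{Ax}{}{(\phi_1\star\psi_1)\leftrightarrow(\phi_2\star\psi_2)}$ and $\deriv{Ax}{}{\heartsuit\phi_1\leftrightarrow\heartsuit\phi_2}$. This is a congruence property of $\mathsf{Ax}$-provable equivalence, and it is exactly what guarantees $\LT{}$ is a genuine \CK-algebra in the first place. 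For the propositional connectives this is standard intuitionistic reasoning, while for the modal operators it follows from the necessitation rule together with the \ref{ax:Kb} and \ref{ax:Kd} axioms (the congruence for $\Box$ and $\Diamond$ being derivable from these). Since Definition~\ref{def:LTalg} already asserts that $\LT{}$ is a \CK-algebra, I would treat this congruence as established and simply invoke it, so that the induction above goes through cleanly. The proof therefore reduces to unwinding the two recursive definitions side by side, with no genuine obstacle once the algebra structure is in place.
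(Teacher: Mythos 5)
Your proof is correct and matches the paper's approach exactly: the paper simply states ``Straightforward induction on $\phi$,'' and your structural induction (base cases from the canonical valuation, inductive steps from the matching clauses of Definitions~\ref{def:algsem} and~\ref{def:LTalg}) is precisely that argument spelled out. Your remark on the congruence property needed for well-definedness of the operations on $\LT{}$ is a sensible observation, correctly attributed to the algebra structure already asserted in Definition~\ref{def:LTalg} rather than to this lemma.
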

\begin{proof}
Straightforward induction on $\phi$.
\end{proof}

Leveraging the properties of $\LT{}$, we can show the coincidence of the two logics
under scrutiny.

\begin{theorem}\label{thm:pdt}
The logic $\log{CK}\oplus \mathsf{Ax}$ is the logic preserving degrees of truth over $\CKAlgAx{\mathsf{Ax}}$.
\end{theorem}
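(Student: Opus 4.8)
The plan is to show the two consequence relations coincide by unpacking both definitions and routing everything through the Lindenbaum-Tarski algebra $\LT{Ax}$ and the truth lemma (Lemma~\ref{lem:algtrulem}). We must prove that for all $\Lambda \cup \{\phi\}$, we have $\Lambda \vdash_{\mathsf{Ax}} \phi$ if and only if $\Lambda \pdt{\CKAlgAx{Ax}} \phi$. I would treat the two directions separately, since the soundness direction (deductions yield semantic consequence) is an induction on derivations, while the completeness direction (semantic consequence yields deductions) exploits $\LT{Ax}$ as a canonical model.

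For the **soundness direction** ($\Lambda \vdash_{\mathsf{Ax}} \phi$ implies $\Lambda \pdt{\CKAlgAx{Ax}} \phi$), I would first observe that $\log{CK}\oplus\mathsf{Ax}$ is compact, so any derivation of $\phi$ from $\Lambda$ uses only finitely many premises $\Lambda' \subseteq_f \Lambda$; this supplies the witness demanded by the existential in~\eqref{eq:pdt}. Then I would fix an arbitrary $\alg{A} \in \CKAlgAx{Ax}$, a valuation $\val$, and an element $a$ with $a \leq \int{\val}(\psi)$ for all $\psi \in \Lambda'$, and argue by induction on the derivation that $a \leq \int{\val}(\phi)$. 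The base cases are the intuitionistic and modal axioms, which hold because $\alg{A}$ is a \CK-algebra (so the axioms evaluate to $\top$, hence dominate $a$), together with the $\mathsf{Ax}$-axioms, which hold because $\alg{A} \models \mathsf{Ax}$; the inductive cases handle modus ponens (using the residuation law $a \leq x$ and $a \leq x \to y$ imply $a \leq y$ in a Heyting algebra) and necessitation (using $\Box\top = \top$ and monotonicity of $\Box$).

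For the **completeness direction** ($\Lambda \pdt{\CKAlgAx{Ax}} \phi$ implies $\Lambda \vdash_{\mathsf{Ax}} \phi$), I would instantiate the semantic consequence at the Lindenbaum-Tarski algebra $\LT{Ax}$, which lies in $\CKAlgAx{Ax}$ because it validates $\mathsf{Ax}$ by construction. Taking the canonical valuation $\val(p) = \eqprv{p}$ and the element $a := \eqprv{\bigwedge \Lambda'}$ (the equivalence class of the finite conjunction of the witnessing $\Lambda'$), the truth lemma gives $\int{\val}_{\scriptscriptstyle\LT{}}(\psi) = \eqprv{\psi}$ for each $\psi$, so the hypothesis $a \leq \eqprv{\psi}$ holds for all $\psi \in \Lambda'$ since $\bigwedge\Lambda' \vdash_{\mathsf{Ax}} \psi$. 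The semantic consequence then yields $a \leq \eqprv{\phi}$, i.e.\ $\eqprv{\bigwedge\Lambda'} \leq \eqprv{\phi}$ in $\LT{Ax}$, which by the definition of the Heyting order unwinds to $\bigwedge \Lambda' \vdash_{\mathsf{Ax}} \phi$, and hence $\Lambda \vdash_{\mathsf{Ax}} \phi$.

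**The main obstacle** I anticipate is the bookkeeping around the finite set $\Lambda'$ and its conjunction: one must confirm that the order $\eqprv{\chi} \leq \eqprv{\psi}$ in $\LT{Ax}$ is exactly $\vdash_{\mathsf{Ax}} \chi \to \psi$, that finite conjunctions are represented correctly as meets, and that $\bigwedge\Lambda' \vdash_{\mathsf{Ax}} \psi$ for each conjunct $\psi \in \Lambda'$. These are all routine consequences of the Heyting algebra structure and the deduction-theorem-like behaviour of intuitionistic consequence, but they are precisely where the finiteness built into Definition~\ref{def:ptd} does its work, matching the \emph{compactness} used in the soundness direction. The modal cases cause no special difficulty here, since the modal axioms already hold in every \CK-algebra and so do not interact with the degrees-of-truth quantifier over $a$.
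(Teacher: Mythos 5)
Your proposal is correct and follows essentially the same route as the paper: the completeness direction instantiates the degrees-of-truth condition at the Lindenbaum--Tarski algebra with the canonical valuation and the element $\eqprv{\bigwedge\Lambda'}$, uses Lemma~\ref{lem:algtrulem} to discharge the antecedent, and unwinds $\eqprv{\bigwedge\Lambda'}\leq\eqprv{\phi}$ to derivability, while the soundness direction is the routine induction the paper leaves implicit. The only cosmetic difference is that you appeal directly to the Heyting order's relation to $\vdash_{\mathsf{Ax}}$ where the paper routes through $\eqprv{\top}\leq\eqprv{(\bigwedge\Lambda')\to\phi}$ and deduction-detachment.
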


\begin{proof}
Suppose $\Lambda\pdt{\CKAlg^{\mathsf{Ax}}}\phi$ and consider
the finite $\Lambda'$ of definition~\eqref{eq:pdt}.
By its finiteness we can create the conjunction of its elements, $\bigwedge\Lambda'$.
Instantiating the definition with $\LT{}$, the canonical valuation, and
$\eqprv{\bigwedge\Lambda'}$, we have that
$$\text{if }(\forall \psi\in\Lambda'.\;\eqprv{\bigwedge\Lambda'}\leq \int{\val}_{\scriptscriptstyle\LT{}}(\psi))\text{ then }\eqprv{\bigwedge\Lambda'}\leq \int{\val}_{\scriptscriptstyle\LT{}}(\phi)$$
By the properties of meet,
$\forall \psi\in\Lambda'.\;\eqprv{\bigwedge\Lambda'}\leq \eqprv{\psi}$,
and so by Lemma~\ref{lem:algtrulem} we have the antecedent of the implication above.
Therefore $\eqprv{\bigwedge\Lambda'}\leq \int{\val}_{\scriptscriptstyle\LT{}{}}(\phi)$ and so
by Lemma~\ref{lem:algtrulem} again, $\eqprv{\bigwedge\Lambda'}\leq \eqprv{\phi}$.
Hence by a standard argument on Heyting algebras,
$\eqprv{\top}\leq \eqprv{(\bigwedge\Lambda')\to\phi}$,
and so $\eqprv{\top}$ and $\eqprv{(\bigwedge\Lambda')\to\phi}$ belong to the
same equivalence class.
This means that $\deriv{}{}{\top\to((\bigwedge\Lambda')\to\phi)}$, so $\deriv{}{}{(\bigwedge\Lambda')\to\phi}$.
By deduction-detachment (see~\cite[Section III]{GroShiClo25})
$\deriv{}{\bigwedge\Lambda'}{\phi}$, so $\deriv{}{\Lambda'}{\phi}$,
hence $\deriv{}{\Lambda}{\phi}$.

The other direction, showing $\Lambda\pdt{\CKAlgAx{Ax}}\phi$ from $\deriv{}{\Lambda}{\phi}$, is a straightforward soundness argument.
\end{proof}

\subsection{Frames, Models, and Bounded Morphisms}\label{subsec:frames}

  We recall the definition of a \CK-frame and define disjoint unions,
  generated subframes and bounded morphic images.

\begin{definition}
  A \emph{\CK-frame} is a tuple $\mo{X} = (X, \expl, \leq, R)$ where
  $(X, \leq)$ is a preordered set of \emph{worlds},
  the \emph{exploding} or \emph{inconsistent world} $\expl \in X$ is maximal with respect to
  $\leq$, and $R$ is a binary relation on $X$ such that
  $\expl R x$ if and only if $x = \expl$, for all $x \in X$.

  We write $R[x]$ for $\{ y \in X \mid xRy \}$,
  and $x \sim y$ where $x \leq y \leq x$,
  i.e.~if two worlds are in the same cluster.
\end{definition}

  Some intution may be helpful for this definition. The two binary relations
  $\leq$ and $R$ represent the \emph{intuitionistic} and \emph{modal}
  reachability relations, respectively.
  The exploding world $\expl$ can be understood as the \emph{unique} world at which all
  formulas are satisfied, including $\bot$.
  This allows us to see that $\lnot\Diamond\bot$
  is not a theorem of the basic modal logic \CK, although it is of many of
  its extensions: it is falsified by any world than can reach $\expl$ via the $R$ relation.

  Let us write $\upp(\mo{X})$ for the collection of upsets of $(X, \leq)$
  that contain $\expl$. These form a Heyting algebra, that we will also denote
  by $\upp(\mo{X})$, with top, 
  bottom, meet and join given by
  $W$, $\{ \expl \}$, intersection and union,
  and implication defined via
  \begin{equation*}
      a \To b
      := \{ x \in X \mid (\forall y)(x \leq y \text{ and } y \in a
                                               \text{ imply } y \in b) \}. \\
  \end{equation*}
  If moreover we define $\dbox, \ddiamond : \upp(\mo{X}) \to \upp(\mo{X})$ by
  \begin{align*}
    \dbox a
      &:= \{ x \in X \mid (\forall y)(\forall z)(x \leq y R z
          \text{ implies } z \in a) \} \\
    \ddiamond a
      &:= \{ x \in X \mid (\forall y)(x \leq y
          \text{ implies } (\exists z)(y R z \text{ and } z \in a)) \}
  \end{align*}
  then we obtain a \CK-algebra $\mo{X}^+ := (\upp(\mo{X}), \dbox, \ddiamond)$,
  called the \emph{complex algebra} of $\mo{X}$.

\begin{lemma}
  Let $\mo{X} = (X, \expl, \leq, R)$ be a \CK-frame.
  Then $\mo{X}^+ = (\upp(\mo{X}), \dbox, \ddiamond)$ is a \CK-algebra.
\end{lemma}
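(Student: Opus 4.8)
The plan is to verify directly that $\mo{X}^+ = (\upp(\mo{X}), \dbox, \ddiamond)$ satisfies all the conditions of Definition~\ref{def:CK-alg}. Since we are already told that $\upp(\mo{X})$ with the stated operations forms a Heyting algebra, the real work is to check (i) that $\dbox a$ and $\ddiamond a$ are genuinely elements of $\upp(\mo{X})$, i.e.~upsets containing $\expl$, and (ii) the four \CK-algebra identities $\dbox\top = \top$, $\dbox a \cap \dbox b = \dbox(a \cap b)$, $\ddiamond a \subseteq \ddiamond(a \cup b)$, and $\dbox a \cap \ddiamond b \subseteq \ddiamond(a \cap b)$.

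For the well-definedness in (i), I would first check that each $\dbox a$ and $\ddiamond a$ is a $\leq$-upset: this follows because both definitions quantify over all $y$ with $x \leq y$, so the defining condition is inherited upward by transitivity of $\leq$. To see that $\expl \in \dbox a$ and $\expl \in \ddiamond a$ whenever $a \in \upp(\mo{X})$, the key is the frame condition that $\expl R x$ holds iff $x = \expl$, together with maximality of $\expl$. For $\dbox a$: if $\expl \leq y R z$ then $y = \expl$ by maximality, so $z = \expl \in a$. For $\ddiamond a$: given $\expl \leq y$ we have $y = \expl$, and $\expl R \expl$ with $\expl \in a$ witnesses the existential.

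For the identities in (ii), $\dbox\top = \top$ is immediate since every $z$ lies in the top element $X$. The equation $\dbox a \cap \dbox b = \dbox(a \cap b)$ is a routine unfolding: the $\forall z$ condition distributes over intersection of the targets. For the two $\ddiamond$ conditions I would argue pointwise. For $\ddiamond a \subseteq \ddiamond(a \cup b)$: any witness $z \in a$ for $x \in \ddiamond a$ also lies in $a \cup b$, so the same $z$ witnesses membership in $\ddiamond(a \cup b)$. For the interaction axiom $\dbox a \cap \ddiamond b \subseteq \ddiamond(a \cap b)$: suppose $x \in \dbox a \cap \ddiamond b$ and $x \leq y$; from $x \in \ddiamond b$ we obtain some $z$ with $y R z$ and $z \in b$, and from $x \in \dbox a$ applied to $x \leq y R z$ we get $z \in a$, so $z \in a \cap b$ witnesses $x \in \ddiamond(a \cap b)$.

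The only step requiring genuine care is showing $\expl$ belongs to $\dbox a$ and $\ddiamond a$, since this is precisely where the special frame axioms governing the exploding world come into play; the remaining verifications are mechanical manipulations of the first-order definitions. I would present the proof as a sequence of short verifications, being explicit about the use of the $\expl R x \iff x = \expl$ condition and the maximality of $\expl$, and leaving the purely Heyting-algebraic identities (which hold by the stated structure on $\upp(\mo{X})$) as understood.
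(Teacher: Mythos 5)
Your proposal is correct and matches the paper's approach: the paper simply declares the well-definedness of the operations and the verification of the (in)equations of Definition~\ref{def:CK-alg} to be routine, and your write-up supplies exactly those routine checks, correctly identifying that the only non-mechanical point is membership of $\expl$ in $\dbox a$ and $\ddiamond a$, which uses the maximality of $\expl$ (in the strong sense forced by $\{\expl\}$ being an upset) together with the condition $\expl R x \iff x = \expl$.
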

\begin{proof}
  It is routine to verify that meets, joins, $\To$, $\dbox$ and $\ddiamond$ are well defined as
  functions on $\upp(\mo{X})$.
  It is similarly routine to check the (in)equations of Definition~\ref{def:CK-alg}.
\end{proof}
  
\begin{definition}\label{def:modsem}
  A \emph{\CK-model} is a pair $\mo{M} = (\mo{X}, V)$ consisting of a \CK-frame
  $\mo{X}$ and a valuation $V : \Prop \to \upp(\mo{X})$ that assigns to each
  proposition letter $p$ an upset of $\mo{X}$ that contains $\expl$.
  The valuation $V$ can be extended to a map $\Formulas \to \upp(\mo{X})$ by:
  \begin{align*}
    V(\bot)
      &= \{ \expl \}
      &  V(\phi \to \psi)
      &= V(\phi) \To V(\psi) \\
    V(\Box\phi)
      &= \dbox V(\phi)
      &  V(\phi \wedge \psi)
      &= V(\phi) \cap V(\psi) \\
    V(\Diamond\phi)
      &= \ddiamond V(\phi)
      &  V(\phi \vee \psi)
      &= V(\phi) \cup V(\psi)
  \end{align*}
  We write $\mo{M}, x \Vdash \phi$ if $x \in V(\phi)$ and
  $\mo{M} \Vdash \phi$ if $V(\phi) = X$.
  Further, we write $\mo{X} \Vdash \phi$ if $\mo{M} \Vdash \phi$
  for every model of the form $\mo{M} = (\mo{X}, V)$.
\end{definition}

\begin{lemma}\label{lem:frm-to-alg}
  Let $\mo{X}$ be a \CK-frame. Then $\mo{X} \Vdash \phi$ if and only if $\mo{X}^+\pdt{}\phi$
  for all $\phi \in \Formulas$.
\end{lemma}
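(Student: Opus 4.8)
The plan is to collapse both sides of the biconditional to the single statement that $\int{\val}(\phi) = \top$ for every valuation $\val$ on $\mo{X}^+$, where $\top$ is the top element of the Heyting algebra $\upp(\mo{X})$, namely the full world set $X$. The key point enabling this is that the carrier of $\mo{X}^+$ is $\upp(\mo{X})$ itself, so that a model valuation $V : \Prop \to \upp(\mo{X})$ in the sense of Definition~\ref{def:modsem} is literally the same datum as an algebra valuation $\val : \Prop \to \upp(\mo{X})$ on $\mo{X}^+$ in the sense of Definition~\ref{def:algsem}.

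First I would establish, by induction on $\phi$, that $V(\phi) = \int{\val}(\phi)$ whenever $V = \val$ as functions $\Prop \to \upp(\mo{X})$. The base case is immediate, since $V(p) = \val(p) = \int{\val}(p)$, and each inductive clause is a direct match: $V(\bot) = \{\expl\}$ is the bottom of $\mo{X}^+$, the clauses for $\wedge, \vee, \to$ are interpreted by intersection, union and $\To$, and the clauses for $\Box, \Diamond$ by $\dbox, \ddiamond$. As these are exactly the operations equipping $\mo{X}^+$, beyond invoking the induction hypothesis there is nothing to compute.

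Next I would reduce each side to this common statement. On the frame side, $\mo{X} \Vdash \phi$ unfolds to ``$V(\phi) = X$ for every $V$'', and since $X$ is the top element of $\upp(\mo{X})$, the identity above rewrites this as ``$\int{\val}(\phi) = \top$ for every $\val$'', which is precisely $\mo{X}^+ \models \phi$ in the sense of Definition~\ref{def:alg_satisfy}. On the algebra side I would unwind $\mo{X}^+ \pdt{} \phi$ through Definition~\ref{def:ptd} instantiated with empty premise set: the only finite subset of $\emptyset$ is $\emptyset$ itself, so the antecedent of the implication there is vacuously satisfied, leaving ``$\forall \val.\, \forall a.\; a \leq \int{\val}(\phi)$''. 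Instantiating $a = \top$ shows this too is equivalent to ``$\int{\val}(\phi) = \top$ for every $\val$''. Chaining these equivalences yields the lemma.

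I do not expect a genuine obstacle, as the argument is pure bookkeeping: the model semantics of Definition~\ref{def:modsem} was set up to agree operation-by-operation with the complex algebra $\mo{X}^+$. The only steps meriting explicit mention are the identification of the top element of $\upp(\mo{X})$ with the world set $X$, which is what lets global frame validity correspond to the algebraic value $\top$, and the collapse of the quantifier $\forall a$ in the definition of $\pdt{}$ to the condition that the interpretation equals $\top$.
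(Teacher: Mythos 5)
Your proposal is correct and follows essentially the same route as the paper, whose proof is a one-line observation that a formula valid on the frame has truth set $X$, which is exactly the top element of the complex algebra; you merely spell out the bookkeeping (the operation-by-operation match between $V(\phi)$ and $\int{\val}(\phi)$, and the collapse of $\pdt{}$ with empty premise set to ``interpretation equals $\top$'') that the paper leaves implicit.
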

\begin{proof}
Straightforward, as for all valuations $V$, a satisfied formula $\varphi$ has valuation $X$, which
is exactly the definition of $\top$ in the complex algebra as required.
\end{proof}

  Next, we define bounded morphisms between frames and between models.
  These are bounded with respect to both relations in both backwards and forwards directions,
  and are pointed morphisms with respect to the exploding world.

\begin{definition}\label{def:mor}
  A \emph{bounded morphism} between \CK-frames $\mo{X} = (X, \expl, \leq, R)$
  and $\mo{X}' = (X', \expl', \leq', R')$ is a function $f : X \to X'$ such that
  for all $x, y \in X$ and $z' \in X'$:
  \begin{enumerate}\itemsep=0em \itemindent=1.5em
    \myitem{B$_{\expl}$} \label{it:Bt}
            $f(x) = \expl'$ if and only if $x = \expl$;
    \myitem{F$_{\leq}$} \label{it:Fleq}
            if $x \leq y$ then $f(x) \leq' f(y)$;
    \myitem{B$_{\leq}$} \label{it:Bleq}
            if $f(x) \leq' z'$ then there exists a
            $z \in X$ such that $x \leq z$ and $f(z) = z'$;
    \myitem{F$_R$} \label{it:FR}
            if $x R y$ then $f(x) R' f(y)$;
    \myitem{B$_R$} \label{it:BR}
            if $f(x) R' z'$ then there exists a
            $z \in X$ such that $x R z$ and $f(z) = z'$.
  \end{enumerate}
  A \emph{bounded morphism} between models $\mo{M} = (\mo{X}, V)$
  and $\mo{M}' = (\mo{X}', V')$ is a bounded morphism
  $f : \mo{X} \to \mo{X}'$ such that $V = f^{-1} \circ V'$.
\end{definition}

\begin{lemma}\label{lem:bm_props}
  Let $f : \mo{X} \to \mo{X}'$ be a bounded morphism. Then
  \begin{enumerate}
    \item \label{it:mor-cpx-alg}
          $f^{-1} : (\mo{X}')^+ \to \mo{X}^+$ is a homomorphism of \CK-algebras.
    \item If $f$ is an embedding then $f^{-1}$ is surjective.
    \item If $f$ is surjective then $f^{-1}$ is injective.
  \end{enumerate}
\end{lemma}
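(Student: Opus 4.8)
The plan is to prove the three parts of Lemma~\ref{lem:bm_props} in order, with part~(1) carrying the conceptual weight and parts~(2) and~(3) following by elementary set-theoretic reasoning about preimages.

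For part~(1), I would show that $f^{-1}$ sends upsets containing $\expl'$ to upsets containing $\expl$ and that it preserves every piece of \CK-algebra structure. First I would check that $f^{-1}$ is well defined on the carrier: if $a' \in \upp(\mo{X}')$, then $f^{-1}(a')$ is an upset by monotonicity of $f$ (property~\ref{it:Fleq}), and it contains $\expl$ because $f(\expl) = \expl' \in a'$ by property~\ref{it:Bt}. Preservation of the finitary Heyting structure ($\top$, $\bot$, $\cap$, $\cup$) is immediate from the standard fact that preimage commutes with Boolean operations, together with property~\ref{it:Bt} to handle $\bot = \{\expl\}$. The two genuinely modal-intuitionistic cases are $\To$, $\dbox$, and $\ddiamond$, and here the forward conditions~\ref{it:Fleq},~\ref{it:FR} and the back conditions~\ref{it:Bleq},~\ref{it:BR} must be used in tandem: the forward conditions give one inclusion and the back conditions give the reverse. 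For instance, to show $f^{-1}(\dbox a') = \dbox f^{-1}(a')$, the forward direction uses \ref{it:Fleq} and \ref{it:FR} to transport a witnessing chain $x \leq y \mathrel{R} z$ across $f$, while the containment $f^{-1}(\dbox a') \supseteq \dbox f^{-1}(a')$ requires, given $f(x) \leq' y' \mathrel{R'} z'$, pulling back first along \ref{it:Bleq} to find $y$ with $x \leq y$ and $f(y) = y'$, then along \ref{it:BR} to find $z$ with $y \mathrel{R} z$ and $f(z) = z'$. The $\ddiamond$ and $\To$ cases are dual variants of the same back-and-forth argument.

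For parts~(2) and~(3), I would argue purely about the map $f^{-1}$ on powersets. If $f$ is an embedding (injective), then surjectivity of $f^{-1}$ follows because every upset of $\mo{X}$ can be written as the preimage of its image: one checks that $a = f^{-1}(f[a])$ for upsets $a$, using injectivity of $f$ together with the back condition~\ref{it:Bleq} to ensure $f[a]$ is itself an upset containing $\expl'$ and that no spurious points enter the preimage. If $f$ is surjective, then $f^{-1}$ is injective because preimage along a surjection is injective on all of $\mathcal{P}(X')$: if $f^{-1}(a') = f^{-1}(b')$ then surjectivity forces $a' = b'$.

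The main obstacle I expect is the verification that $f^{-1}$ commutes with $\dbox$, $\ddiamond$, and especially $\To$, since these are precisely the operations whose definitions quantify over the $\leq$- and $R$-successors in the intermediate frame; getting the inclusions right demands careful alternation of the forward and backward morphism conditions, and the $\To$ case in particular requires the back condition \ref{it:Bleq} to reconstruct an intuitionistic successor in $\mo{X}$ from one in $\mo{X}'$. By contrast, the bookkeeping for parts~(2) and~(3) is routine once one recognizes that they reduce to standard facts about preimages under injective and surjective maps.
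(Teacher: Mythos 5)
Your proposal is correct and follows essentially the same route as the paper's proof: well-definedness of $f^{-1}$ on $\upp(\mo{X}')$ via \ref{it:Bt} and \ref{it:Fleq}, the Boolean operations handled by standard preimage facts, the operations $\To$, $\dbox$, $\ddiamond$ by pairing the forward conditions for one inclusion with the back conditions \ref{it:Bleq}, \ref{it:BR} for the other, and parts (2) and (3) by the same elementary arguments ($a = f^{-1}(f[a])$ for an embedding, and injectivity of preimage along a surjection). You correctly identify the only genuinely delicate point, namely which inclusion in each of the $\To$/$\dbox$/$\ddiamond$ cases requires the back conditions.
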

\begin{proof}
  The first item requires first, that $f^{-1}$ is a function $\upp(\mo{X}')\to\upp(\mo{X})$, i.e.~that
  for each upset of $X'$ containing $\expl$, its $f$-inverse is an upset containing $\expl$.
  This follows from \eqref{it:Bt} and \eqref{it:Fleq}.
  We then must check that $f^{-1}$ exactly preserves all operators.
  Preservation of $\top$,
  meets, and joins is trivial,
  while $\bot= \{ \expl \}$ follows immediately from \eqref{it:Bt}.
  For the other operators we present $\To$ and $\dbox$, with $\ddiamond$
  following similarly.
  
  Let $a', b' \in \upp(\mo{X}')$ and $x \in X$ such that $f(x) \in a' \To b'$.
  We need to show that $x \in f^{-1}(a') \To f^{-1}(b')$.
  To this end, suppose $x \leq y$ and $y \in f^{-1}(a')$.
  Then $f(y) \in a'$ and $f(x) \leq' f(y)$ by~\eqref{it:Fleq}, hence $f(y) \in b'$,
  so that $y \in f^{-1}(b')$ as required.
  Conversely, let $x \in f^{-1}(a') \To f^{-1}(b')$ and 
  suppose $f(x) \leq' y'$ for some $y' \in a'$.
  Then by~\ref{it:Bleq} there exists a $y \in X$ with $x \leq y$ and $f(y) = y'$. 
  This then implies $y \in f^{-1}(a')$, hence $y \in f^{-1}(b')$,
  so that $y' = f(y) \in b'$. This proves $x \in f^{-1}(a' \To b')$.
  
  For the $\dbox$-case,
  let $f(x) \in \dbox a'$ for some $a' \in (\mo{X}')^+$, and suppose that
  $x \leq y\,R\,z$. Then $f(x)\leq' f(y)\,R'\,f(z)$ by \eqref{it:Fleq} and \eqref{it:FR}.
  Hence $z \in \dbox f^{-1}(a')$ as required. The converse follows similarly,
  using~\eqref{it:FR} and~\eqref{it:BR}.
  
  For the second item, if $a \in \upp(\mo{X})$ then since $f$ is bounded,
  $f[a] \subseteq \upp(\mo{X'})$.
  Then $a \in f^{-1}(f[a])$, and since $f$ is an $\leq$-embedding we also
  have $f^{-1}(f[a]) \subseteq a$. Therefore $a = f^{-1}(f[a])$, so $f^{-1}$
  is surjective.
  
  Third, suppose $a', b' \in \upp(\mo{X'})$ and $f^{-1}(a') = f^{-1}(b')$.
  Let $x' \in a'$. By surjectivity of $f$ we can find some $x \in X$ such that
  $f(x) = x'$. Then $f(x) \in a'$, so $x \in f^{-1}(a') = f^{-1}(b')$,
  and hence $x' = f(x) \in b'$. This proves $a' \subseteq b'$.
  We similarly obtain $b' \subseteq a'$, so $a' = b'$.
  This proves injectivity of $f^{-1}$.
\end{proof}

\begin{proposition}\label{prop:mor-pres-truth}
  Let $\mo{X} = (X, \expl, \leq, R)$ and $\mo{X}' = (X', \expl', \leq', R')$
  be two \CK-frames, and $\mo{M} = (\mo{X}, V)$ and $\mo{M}' = (\mo{X}', V')$
  two \CK-models. Suppose $f : \mo{M} \to \mo{M}'$ is a bounded morphism.
  Then for all formulas $\phi$ and all $x \in X$  we have
  \begin{equation*}
    \mo{M}, x \Vdash \phi \iff \mo{M}', f(x) \Vdash \phi.
  \end{equation*}
\end{proposition}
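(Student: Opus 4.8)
The plan is to argue by induction on the structure of $\phi$, invoking the clauses of Definition~\ref{def:mor} one at a time. The base and propositional cases are immediate. For a proposition letter $p$ I would use directly that $f$ is a morphism of models, so that $V(p) = f^{-1}(V'(p))$; hence $x \in V(p)$ iff $f(x) \in V'(p)$. For $\bot$ I would use $V(\bot) = \{\expl\}$ and $V'(\bot) = \{\expl'\}$ together with condition~\ref{it:Bt}, which says exactly that $x = \expl$ iff $f(x) = \expl'$. The cases $\phi \wedge \psi$ and $\phi \vee \psi$ follow at once from the induction hypothesis, since $V$ and $V'$ commute with $\cap$ and $\cup$.

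The remaining cases each split into two directions, and this is where the back-and-forth conditions on $\leq$ and $R$ do the work. For $\phi \to \psi$, to move satisfaction from $x$ to $f(x)$ I would take an arbitrary $y'$ with $f(x) \leq' y'$ and $\mo{M}', y' \Vdash \phi$, lift it via~\ref{it:Bleq} to some $y$ with $x \leq y$ and $f(y) = y'$, obtain $\mo{M}, y \Vdash \phi$ from the induction hypothesis, and conclude $\mo{M}, y \Vdash \psi$ hence $\mo{M}', y' \Vdash \psi$; the converse direction instead pushes a $\leq$-successor forward using~\ref{it:Fleq}. The $\Box$ clause is analogous but threads one $R$-step after the $\leq$-prefix: transferring from $x$ to $f(x)$ I would decompose $f(x) \leq' y' R' z'$ by first lifting $y'$ through~\ref{it:Bleq} and then lifting $z'$ through~\ref{it:BR}, while the converse direction pushes $x \leq y R z$ forward through~\ref{it:Fleq} and~\ref{it:FR}. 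The $\Diamond$ clause dualises this: because it asserts the existence of an $R$-successor, one direction produces a witness with~\ref{it:FR} (after lifting the $\leq$-prefix by~\ref{it:Bleq}), and the other pulls a witness back with~\ref{it:BR} (after pushing the $\leq$-prefix forward by~\ref{it:Fleq}). In this way each of the five morphism conditions, together with~\ref{it:Bt}, is used exactly where it is needed.

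The argument is routine rather than deep, but the one place demanding care is the coordination of the intuitionistic $\leq$-prefix with the modal step in the $\Box$ and $\Diamond$ clauses: in the classical setting these modalities quantify over $R$ alone, whereas here every modal clause carries a hidden $\leq$-movement that must be lifted or pushed in the correct direction before the $R$-step is handled. Choosing the matching condition for each sub-case — a forward condition whenever one transports a hypothesis along $f$, a backward condition whenever one reflects a successor through $f$ — is the only genuine bookkeeping, and the symmetry of Definition~\ref{def:mor} in its forward and backward clauses is precisely what guarantees that both directions of each biconditional go through.
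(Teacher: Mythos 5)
Your proof is correct and follows essentially the same route as the paper: an induction on the structure of $\phi$ in which each connective's case is discharged by the matching forward or backward clause of Definition~\ref{def:mor}. The only difference is presentational — the paper delegates the connective-by-connective work to Lemma~\ref{lem:bm_props} (that $f^{-1}$ is a \CK-algebra homomorphism, so $V = f^{-1}\circ V'$ extends from proposition letters to all formulas), whereas you inline exactly those arguments; your case analysis, including the coordination of the $\leq$-prefix with the $R$-step in the $\Box$ and $\Diamond$ clauses, matches the computations in that lemma's proof.
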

\begin{proof}
  By induction on the structure of $\phi$, with use of the previous lemma.
\end{proof}

\begin{definition}
  For each $i$ in some index set $I$, let
  $\mo{X}_i = (X_i, \expl_i, \leq_i, R_i)$ be a \CK-frame.
  Then the \emph{disjoint union} of the $\mo{X}_i$ is the disjoint union of frames
  modulo an equivalence relation identifying $\expl_i$ for all $i \in I$. More formally,
  \begin{equation*}
    \coprod_{i \in I} \mo{X}_i = (X, \expl, \leq, R)
  \end{equation*}
  where $X = \bigcup \{ (i, x) \mid i \in I, x \in X_i \setminus \expl_i \} \cup \{ \expl \}$,
  and relations $\leq$ and $R$ given by
  \begin{align*}
    (i, x) \leq (j, y) &\iff i = j \text{ and } x \leq_i y 
    &&&    (i, x) R (j, y) &\iff i = j \text{ and } x R_i y \\
    (i, x) \leq \expl &\iff x \leq_i \expl_i 
    &&&    (i, x) R \expl &\iff x R_i \expl_i \\
    \expl \leq \expl
    &&&&    \expl R \expl
  \end{align*}
\end{definition}

  It is easy to verify that $\coprod_{i \in I} \mo{X}_i$ is a \CK-frame.

\begin{lemma}\label{lem:coprod_and_satisfaction}
  Let $\mo{X}_i = (X_i, \expl_i, \leq_i, R_i)$ be a collection of \CK-frames,
  where $i$ ranges over some index set $I$. Then for all $\phi \in \Formulas$,
  \begin{equation*}
    \coprod_{i \in I} \mo{X}_i \Vdash \phi
      \iff \mo{X}_i \Vdash \phi \text{ for all } i \in I.
  \end{equation*}
\end{lemma}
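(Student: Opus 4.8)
The plan is to exhibit, for each $i \in I$, the canonical injection $\iota_i : \mo{X}_i \to \coprod_{j \in I} \mo{X}_j$ sending $x \mapsto (i,x)$ for $x \neq \expl_i$ and $\expl_i \mapsto \expl$, to show that it is a bounded morphism, and then to transfer truth back and forth using Proposition~\ref{prop:mor-pres-truth}. First I would verify that each $\iota_i$ satisfies the five conditions of Definition~\ref{def:mor}. Conditions \ref{it:Bt}, \ref{it:Fleq} and \ref{it:FR} are immediate from the definition of $\leq$ and $R$ on the disjoint union, since each relation is internal to a single summand apart from the edges into the glued world $\expl$. The back conditions \ref{it:Bleq} and \ref{it:BR} are the delicate part: one observes that any world $\leq$- or $R$-above $(i,x)$ again lies in the $i$-th summand or equals $\expl$, and that $\expl$ is $\leq$-maximal and satisfies $\expl R y$ iff $y = \expl$, so no witness outside the image of $\iota_i$ can arise.

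For the direction $(\Leftarrow)$, assume $\mo{X}_i \Vdash \phi$ for every $i$, and let $V$ be an arbitrary valuation on $\coprod_j \mo{X}_j$ and $w$ an arbitrary world. If $w = (i,x)$, set $V_i := \iota_i^{-1} \circ V$; its values are upsets containing $\expl_i$ (as in the proof of Lemma~\ref{lem:bm_props}), so it is a valuation on $\mo{X}_i$, and it makes $\iota_i$ a morphism of models by construction. Since $(\mo{X}_i, V_i) \Vdash \phi$, Proposition~\ref{prop:mor-pres-truth} gives $(\coprod_j \mo{X}_j, V), (i,x) \Vdash \phi$. The remaining world $w = \expl$ equals $\iota_i(\expl_i)$ for any $i$ when $I \neq \emptyset$, so the same argument applies; and when $I = \emptyset$ the disjoint union is the single exploding world, which validates every formula. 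Thus $\coprod_j \mo{X}_j \Vdash \phi$.

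For $(\Rightarrow)$, assume $\coprod_j \mo{X}_j \Vdash \phi$ and fix $i$, a valuation $V_i$ on $\mo{X}_i$, and a world $x \in X_i$. I would extend $V_i$ to a valuation $V$ on the disjoint union by setting $V(p) := \iota_i[V_i(p)] \cup \bigcup_{j \neq i} \iota_j[X_j]$, which is an upset containing $\expl$ and satisfies $\iota_i^{-1}(V(p)) = V_i(p)$, so that $\iota_i : (\mo{X}_i, V_i) \to (\coprod_j \mo{X}_j, V)$ is again a morphism of models. Since the disjoint union validates $\phi$, the point $\iota_i(x)$ satisfies $\phi$, and Proposition~\ref{prop:mor-pres-truth} transfers this back to $(\mo{X}_i, V_i), x \Vdash \phi$; as $V_i$ and $x$ were arbitrary, $\mo{X}_i \Vdash \phi$.

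The main obstacle is the verification that the injections $\iota_i$ are bounded morphisms, which is where the identification of all exploding worlds must be handled with care: the back conditions could in principle fail if the gluing made a fresh world reachable, but maximality of $\expl$ under $\leq$ and the frame condition on $R$ at $\expl$ rule this out. An alternative, purely algebraic route would establish $(\coprod_j \mo{X}_j)^+ \cong \prod_j \mo{X}_j^+$ and combine Lemma~\ref{lem:frm-to-alg} with the fact that a product of \CK-algebras satisfies $\phi$ exactly when each factor does; the model-theoretic argument above seems more direct given the tools already developed.
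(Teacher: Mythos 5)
Your proposal is correct and follows essentially the same route as the paper: both directions are handled by exhibiting the canonical inclusions as bounded morphisms of models (restricting a coproduct valuation to a summand, resp.\ extending a summand valuation to the coproduct) and then invoking Proposition~\ref{prop:mor-pres-truth}. The only differences are cosmetic --- your explicit verification of the back conditions at the glued world $\expl$, your particular choice of extended valuation in the forward direction, and the remark about the empty index set --- none of which changes the argument.
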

\begin{proof}
  Suppose $\coprod_{i \in I} \mo{X}_i \Vdash \phi$.
  Let $V$ be a valuation for $\mo{X}_j$ for some $j \in I$.
  Then $V$ is also a valuation for $\coprod_{i \in I} \mo{X}_i$.
  Moreover, with this valuation the inclusion function
  \begin{equation*}
    (\mo{X}_j, V) \to \Big( \coprod_{i \in I} \mo{X}_i, V \Big)
  \end{equation*}
  is a bounded morphism between the resulting models.
  It then follows from Lemma~\ref{prop:mor-pres-truth} that
  $(\mo{X}_j, V) \Vdash \phi$, and since $V$ was arbitrary $\mo{X}_j \Vdash \phi$.
  
  For the converse, suppose $\mo{X}_i \Vdash \phi$ for all $i \in I$.
  Let $V$ be a valuation for $\coprod_{i \in I} \mo{X}_i$.
  For each $j \in I$, define the valuation $V_j$ for $\mo{X}_j$ by
  $V_j(p) = V(p) \cap X_j$. Then for each $j \in I$ the inclusion function
  \begin{equation*}
    (\mo{X}_j, V_j) \to \Big( \coprod_{i \in I} \mo{X}_i, V \Big)
  \end{equation*}
  is a bounded morphism between models.
  Since every world in the coproduct lies in the image of one of the inclusion
  functions, Proposition~\ref{prop:mor-pres-truth} entails
  $(\coprod_{i \in I} \mo{X}_i, V) \Vdash \phi$,
  and hence $\coprod_{i \in I} \mo{X}_i \Vdash \phi$ because the valuation $V$ was arbitrary.
\end{proof}

\begin{lemma}\label{lem:prod-coprod}
  Let $\mo{X}_i = (X_i, \expl_i, \leq_i, R_i)$ be a collection of \CK-frames,
  where $i$ ranges over some index set $I$.
  Then we have
  \begin{equation*}
    \Big( \coprod_{i \in I} \mo{X}_i \Big)^+ \cong \prod_{i \in I} \mo{X}_i^+.
  \end{equation*}
\end{lemma}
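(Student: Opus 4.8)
The plan is to build the isomorphism directly out of the inclusion maps of the coproduct, leaning on the functoriality of the complex-algebra construction so as to get the homomorphism property almost for free. The coproduct $\coprod_{i \in I}\mo{X}_i$ comes equipped with inclusions $\iota_i : \mo{X}_i \to \coprod_{j\in I}\mo{X}_j$ sending $\expl_i$ to the shared $\expl$ and each $x \neq \expl_i$ to $(i,x)$; these are bounded morphisms, exactly as already used in the proof of Lemma~\ref{lem:coprod_and_satisfaction}. By the first item of Lemma~\ref{lem:bm_props}, each preimage map $\iota_i^{-1}$ is then a homomorphism $(\coprod_{j}\mo{X}_j)^+ \to \mo{X}_i^+$. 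Since \CK-algebras form a variety, the product $\prod_{i}\mo{X}_i^+$ carries the pointwise structure and is the categorical product, so a map into it is a homomorphism precisely when each of its components is. Hence the $\iota_i^{-1}$ assemble into a single homomorphism $\Phi := \langle \iota_i^{-1}\rangle_{i\in I} : (\coprod_{j}\mo{X}_j)^+ \to \prod_{i}\mo{X}_i^+$, given explicitly by $\Phi(a) = (\iota_i^{-1}[a])_{i\in I}$ where $\iota_i^{-1}[a] = \{x \in X_i : \iota_i(x) \in a\}$. It then suffices to show that $\Phi$ is a bijection.

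For injectivity I would argue that $a$ is recoverable from its components. Every world of the coproduct is either the shared $\expl$, which lies in every admissible upset, or of the form $(i,x) = \iota_i(x)$ for a unique $i$ and some $x \neq \expl_i$; and for such $x$ one has $(i,x) \in a$ precisely when $x \in \iota_i^{-1}[a]$. Consequently, if $\iota_i^{-1}[a] = \iota_i^{-1}[b]$ for all $i$, then $a$ and $b$ contain the same worlds, i.e.\ $a = b$.

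Surjectivity is the part that requires actual work. Given a family $(a_i)_{i\in I}$ with each $a_i \in \upp(\mo{X}_i)$, I would set $a := \bigcup_{i\in I}\iota_i[a_i] = \{\expl\}\cup\{(i,x) : i \in I,\ x \in a_i,\ x\neq\expl_i\}$ and verify three things: that $a$ contains $\expl$ (clear, since $\expl_i \in a_i$ for every $i$); that $a$ is an upset (the $\leq$-successors of a non-exploding world $(i,x)$ are again of the form $(i,y)$ with $x \leq_i y$, together possibly with $\expl$, so upward closure of $a$ reduces to upward closure of each $a_i$ inside $\mo{X}_i$); and that $\iota_i^{-1}[a] = a_i$ for every $i$ (immediate at $\expl_i$, and for $x \neq \expl_i$ it holds because the images $\iota_j[a_j]$ are pairwise disjoint away from $\expl$). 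This yields $\Phi(a) = (a_i)_{i\in I}$, proving surjectivity. The main obstacle, such as it is, is purely the bookkeeping around the identified exploding world: one must track when a successor escapes a component into $\expl$, and use both that $\expl$ belongs to every admissible upset and that $\expl R x$ forces $x = \expl$. Routing the argument through Lemma~\ref{lem:bm_props} is what lets me avoid re-verifying preservation of $\To$, $\dbox$ and $\ddiamond$ by hand, which would otherwise be the most tedious part of the proof.
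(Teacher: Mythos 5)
Your proposal is correct and uses essentially the same isomorphism as the paper: your $\Phi$ is exactly the paper's left-to-right map (the $i$'th component of $a$ is the set of elements of the disjoint union with first component $i$), and your surjectivity witness is the paper's right-to-left map (the disjoint union of the components). The paper simply asserts these are mutually inverse, whereas you additionally justify the homomorphism property cleanly via Lemma~\ref{lem:bm_props} and the universal property of the product, which is a tidy way to avoid re-checking $\To$, $\dbox$ and $\ddiamond$ by hand.
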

\begin{proof}
  The right-to-left function maps each tuple to the disjoint union of its components, while the
  left-to-right function has as $i$'th component the elements of the disjoint union whose first
  component is $i$. These maps are easily seen to be inverses.
\end{proof}

\begin{definition}
  A \emph{generated subframe} of a \CK-frame $\mo{X} = (X, \expl, \leq, R)$ is a \CK-frame
  $(X', \expl, \leq', R')$ where $X'$ is a subset of $X$ containing $\expl$ that is closed upwards for
  both $\leq$ and $R$, and $\leq'$ and $R'$ are the restrictions of $\leq$ and $R$ to $X'$.

  The \CK-frame $\mo{X'}$ is a \emph{bounded morphic image} of $\mo{X}$ if there exists a surjective bounded
  morphism $\mo{X}\to\mo{X'}$
\end{definition}

\begin{lemma}
  Let $\mo{X} = (X, \expl, \leq, R)$ and $\mo{X}' = (X', \expl', \leq', R')$ be
  two \CK-frames such that $X \subseteq X'$.
  Then $\mo{X}$ is a generated subframe of $\mo{X}'$ if and only if
  the inclusion $i : X \to X'$ is an embedding.
\end{lemma}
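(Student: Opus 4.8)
The plan is to unwind both sides of the biconditional into conditions relating the exploding world and the two relations of $\mo{X}$ to those of $\mo{X}'$, and to observe that these conditions coincide. Recall that an embedding is an injective bounded morphism; since the inclusion $i$ is automatically injective, it suffices to show that $\mo{X}$ is a generated subframe of $\mo{X}'$ if and only if $i$ satisfies the five conditions \ref{it:Bt}--\ref{it:BR} of Definition~\ref{def:mor}. Throughout I use that $i(x) = x$, so each of these conditions becomes a statement purely about $\expl$, $\expl'$, $\leq$, $\leq'$, $R$ and $R'$.

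First I would treat the exploding world. Condition \ref{it:Bt} reads: for all $x \in X$, $x = \expl'$ iff $x = \expl$. Taking $x = \expl$ shows this forces $\expl = \expl'$, and conversely $\expl = \expl'$ clearly makes the biconditional hold for every $x$. Hence \ref{it:Bt} is equivalent to $\expl = \expl'$, which is exactly the requirement in the definition of generated subframe that $X$ contain the shared exploding world.

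Next I would handle the intuitionistic relation. Condition \ref{it:Fleq} says $x \leq y$ implies $x \leq' y$, i.e.\ that $\leq$ is contained in the restriction of $\leq'$ to $X$. Condition \ref{it:Bleq} says that whenever $x \in X$ and $x \leq' z'$, there is $z \in X$ with $x \leq z$ and $z = z'$; unwinding this with $z = z'$, it asserts both that $z' \in X$ (so $X$ is upward closed under $\leq'$) and that $x \leq z'$ (so the restriction of $\leq'$ to $X$ is contained in $\leq$). Together \ref{it:Fleq} and \ref{it:Bleq} are therefore equivalent to the statement that $X$ is $\leq'$-upward closed and that $\leq$ is the restriction of $\leq'$ to $X$. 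The identical argument applied to \ref{it:FR} and \ref{it:BR} shows that they are jointly equivalent to: $X$ is $R'$-upward closed and $R$ is the restriction of $R'$ to $X$. Combining the three cases, the conjunction of \ref{it:Bt}--\ref{it:BR} is precisely the definition of $\mo{X}$ being a generated subframe of $\mo{X}'$, which settles both directions simultaneously.

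The only point that needs care is that the back-conditions \ref{it:Bleq} and \ref{it:BR} each do double duty: they encode both the upward-closure of $X$ and the reflection of the relation (the containment of $\leq'\!\restriction X$ in $\leq$, respectively $R'\!\restriction X$ in $R$), while the forward conditions \ref{it:Fleq} and \ref{it:FR} supply the reverse containments. Beyond this bookkeeping there is no genuine obstacle: injectivity of $i$ is immediate, reflection of the orders comes for free from the back-conditions (so nothing extra is needed even under a stricter, order-embedding reading of ``embedding''), and no appeal to the complex algebra or to Lemma~\ref{lem:bm_props} is required.
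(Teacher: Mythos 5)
Your proof is correct: the paper states this lemma without proof, and your direct unwinding of conditions \ref{it:Bt}--\ref{it:BR} of Definition~\ref{def:mor} into the exploding-world, upward-closure and restriction requirements of the generated-subframe definition is exactly the routine argument the authors evidently intend. You also handle the two genuinely delicate points well --- that \ref{it:Bt} forces $\expl=\expl'$, and that the back-conditions simultaneously yield upward closure and the reverse containment of the restricted relations (so order-reflection is automatic) --- so nothing further is needed.
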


\begin{lemma}\label{lem:morphisms_and_satisfaction}
  Let $f : \mo{X} \to \mo{X}'$ be a bounded morphism.
  \begin{enumerate}
    \item If $f$ is an embedding, then $\mo{X}' \Vdash \phi$ implies
          $\mo{X} \Vdash \phi$, for all $\phi \in\Formulas$.
    \item If $f$ is surjective, then $\mo{X} \Vdash \phi$ implies
          $\mo{X}' \Vdash \phi$, for all $\phi \in \Formulas$.
  \end{enumerate}
\end{lemma}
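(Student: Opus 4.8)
The plan is to transfer the statement to the algebraic side, where it becomes two instances of the familiar fact that satisfaction of a formula is preserved under passage to subalgebras and to homomorphic images. Two earlier results do all the heavy lifting. First, Lemma~\ref{lem:frm-to-alg} tells us that $\mo{X}\Vdash\phi$ holds precisely when the complex algebra $\mo{X}^+$ satisfies $\phi$ in the sense of Definition~\ref{def:alg_satisfy}, and likewise for $\mo{X}'$, so it suffices to reason about $\mo{X}^+$ and $(\mo{X}')^+$. Second, Lemma~\ref{lem:bm_props} shows that $f^{-1}\colon(\mo{X}')^+\to\mo{X}^+$ is always a homomorphism of \CK-algebras, that it is surjective when $f$ is an embedding, and that it is injective when $f$ is surjective. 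The point to keep track of is that $f^{-1}$ is \emph{contravariant}: an embedding of frames produces a surjection of complex algebras, and a surjection of frames produces an embedding of complex algebras.

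First I would record the two purely algebraic preservation facts, both standard and both proved by a routine induction on $\phi$. (i) If $h\colon\alg{A}\to\alg{B}$ is a surjective homomorphism of \CK-algebras and $\alg{A}\models\phi$, then $\alg{B}\models\phi$: any valuation on $\alg{B}$ lifts along $h$ to a valuation on $\alg{A}$, and $h$ commutes with interpretation, so the interpretation of $\phi$ in $\alg{B}$ is the $h$-image of $\top$, namely $\top$. (ii) If $\alg{A}$ is a subalgebra of $\alg{B}$ and $\alg{B}\models\phi$, then $\alg{A}\models\phi$: a valuation on $\alg{A}$ is also a valuation on $\alg{B}$, and since $\alg{A}$ is closed under the operations the interpretation of $\phi$ is computed identically, hence equals $\top$. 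Both rest on the fact that $\alg{A}\models\phi$ is the equational assertion that the interpretation of $\phi$ is constantly $\top$.

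With these in hand the two items assemble immediately, provided one respects the direction reversal. For item~1, $f$ being an embedding makes $f^{-1}$ a surjective homomorphism onto $\mo{X}^+$, so $\mo{X}^+$ is a homomorphic image of $(\mo{X}')^+$; by (i), $(\mo{X}')^+\models\phi$ yields $\mo{X}^+\models\phi$, that is, $\mo{X}'\Vdash\phi$ yields $\mo{X}\Vdash\phi$. For item~2, $f$ being surjective makes $f^{-1}$ an injective homomorphism, so $(\mo{X}')^+$ is isomorphic to a subalgebra of $\mo{X}^+$; by (ii), $\mo{X}^+\models\phi$ yields $(\mo{X}')^+\models\phi$, that is, $\mo{X}\Vdash\phi$ yields $\mo{X}'\Vdash\phi$.

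Since all of the semantic content has been pushed into the two cited lemmas, there is no genuine obstacle here; the only thing that requires care is matching up the directions, because the contravariance of $f^{-1}$ swaps the roles of ``embedding'' and ``surjection'' as one moves between frames and algebras. Should one prefer a direct model-theoretic route, one can instead invoke Proposition~\ref{prop:mor-pres-truth}: for the surjective case, put $V=f^{-1}\circ V'$ and deduce satisfaction at every world of $\mo{X}'$ from surjectivity of $f$; for the embedding case, the one mild subtlety is realising an arbitrary valuation $V$ on $\mo{X}$ as $f^{-1}\circ V'$ for some $V'$ on $\mo{X}'$, which is exactly what the surjectivity of $f^{-1}$ in Lemma~\ref{lem:bm_props} provides.
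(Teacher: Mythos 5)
Your argument is correct, but it takes a genuinely different route from the paper. The paper proves both items directly on the model side: given a valuation on the ``target'' frame of the implication, it constructs a matching valuation on the other frame (explicitly, $V^{\uparrow}(p) = \{x' \mid \exists y \in V(p).\, f(y) \leq x'\}$ in the embedding case, and $V = f^{-1}\circ V'$ in the surjective case), checks that $f$ then becomes a bounded morphism of \emph{models}, and invokes Proposition~\ref{prop:mor-pres-truth}. That is exactly the ``direct model-theoretic route'' you sketch in your closing paragraph, including the observation that the only real content in the embedding case is realising an arbitrary $V$ as $f^{-1}\circ V^{\uparrow}$. Your primary argument instead factors everything through the complex algebras: Lemma~\ref{lem:frm-to-alg} converts $\Vdash$ into algebraic satisfaction, Lemma~\ref{lem:bm_props} turns an embedding (resp.\ surjection) of frames into a surjective (resp.\ injective) homomorphism $f^{-1}$, and the two standard preservation facts for homomorphic images and subalgebras finish the job. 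Both routes are sound; you correctly handle the contravariance, and your reading of Lemma~\ref{lem:frm-to-alg} as ``$\mo{X}\Vdash\phi$ iff every valuation interprets $\phi$ as $\top$ in $\mo{X}^+$'' is the intended one. What the paper's route buys is self-containedness -- it needs only Proposition~\ref{prop:mor-pres-truth}, with no auxiliary algebraic preservation lemmas. What your route buys is a cleaner conceptual fit with the rest of the paper: the $H$/$S$ preservation facts you prove are precisely the ones the paper later invokes without proof in Lemma~\ref{lem:mdv} when transferring Birkhoff's theorem, so your version makes that dependency explicit. The one small cost of your approach is that preservation under homomorphic images requires choosing preimages of $V'(p)$ for each proposition letter, a (harmless) use of choice that the direct construction avoids.
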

\begin{proof}
  Suppose $\mo{X}' \Vdash \phi$ and let $V$ be a valuation for $\mo{X}$.
  Define a valuation $V^{\uparrow}$ for $\mo{X}'$ by
  $V^{\uparrow}(p) = \{ x' \in X' \mid \exists y \in V(p) \text{ s.t. } f(y) \leq x' \}$.
  Then the fact that $f$ is an embedding implies that $V = f^{-1} \circ V^{\uparrow}$.
  Using this, and the fact that $f$ is a bounded morphisms between the \CK-frames
  $\mo{X}$ and $\mo{X}'$, it follows that
  $f : (\mo{X}, V) \to (\mo{X}', V^{\uparrow})$ is a
  bounded morphism between \CK-models.
  The assumption that $\mo{X} \Vdash \phi$, together with
  Proposition~\ref{prop:mor-pres-truth}, implies that $(\mo{X}, V) \Vdash \phi$.
  Since $V$ is arbitrary, we conclude $\mo{X} \Vdash \phi$.

  For the second item, suppose $\mo{X} \Vdash \phi$ and let $V'$ be any
  valuation for $\mo{X}'$. Define the valuation $V$ for $\mo{X}$ by
  $V(p) = f^{-1}(V'(p))$. Then the same argument as above yields $\mo{X}' \Vdash \phi$.
\end{proof}

\begin{definition}\label{def:frm-axiomatic}
  A collection $\mathcal C$ of \CK-frames is called \emph{axiomatic} if there exists a set of
  formulas $\mathsf{Ax}$ such that $\mathcal C$ is the class of all frames that satisfy the theorems
  of $\CK\oplus\mathsf{Ax}$.
\end{definition}

\begin{proposition}[c.f. Definition~\ref{def:alg_satisfy}]\label{prop:ax-class-1}
Axiomatic classes are closed under taking disjoint unions, generated subframes and bounded morphic images.
\end{proposition}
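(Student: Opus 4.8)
The plan is to show closure under each of the three constructions separately, relying on the satisfaction lemmas already established. Let $\ms{C}$ be an axiomatic class, so that $\ms{C}$ is the class of all \CK-frames satisfying the theorems of $\CK \oplus \mathsf{Ax}$ for some set of formulas $\mathsf{Ax}$; equivalently, $\mo{X} \in \ms{C}$ if and only if $\mo{X} \Vdash \phi$ for every theorem $\phi$. Since the three closure properties are about preservation of the single condition ``$\mo{X} \Vdash \phi$ for all theorems $\phi$'', it suffices to verify, construction by construction, that this condition is inherited, and in each case we may fix an arbitrary theorem $\phi$ and argue frame-by-frame.

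First I would treat \emph{generated subframes} and \emph{bounded morphic images} together, as both are instances of bounded morphisms. If $\mo{X}$ is a generated subframe of some $\mo{X}' \in \ms{C}$, then the inclusion is an embedding by the lemma characterising generated subframes, so by item~(1) of Lemma~\ref{lem:morphisms_and_satisfaction}, $\mo{X}' \Vdash \phi$ implies $\mo{X} \Vdash \phi$; since $\mo{X}'$ satisfies every theorem, so does $\mo{X}$, whence $\mo{X} \in \ms{C}$. Dually, if $\mo{X}'$ is a bounded morphic image of some $\mo{X} \in \ms{C}$, there is a surjective bounded morphism $\mo{X} \to \mo{X}'$, so item~(2) of the same lemma gives $\mo{X} \Vdash \phi \Rightarrow \mo{X}' \Vdash \phi$, and again $\mo{X}' \in \ms{C}$.

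For \emph{disjoint unions}, suppose $\mo{X}_i \in \ms{C}$ for all $i$ in some index set $I$, and let $\phi$ be any theorem of $\CK \oplus \mathsf{Ax}$. Then $\mo{X}_i \Vdash \phi$ for every $i$, and Lemma~\ref{lem:coprod_and_satisfaction}, which establishes exactly the biconditional between satisfaction in the coproduct and satisfaction in every summand, yields $\coprod_{i \in I} \mo{X}_i \Vdash \phi$. As $\phi$ was an arbitrary theorem, $\coprod_{i \in I} \mo{X}_i \in \ms{C}$.

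I do not expect any genuine obstacle here: the proposition is essentially a bookkeeping corollary of the three preservation lemmas (Lemmas~\ref{lem:coprod_and_satisfaction} and~\ref{lem:morphisms_and_satisfaction}), which do all the real work. The only point requiring a little care is the logical quantifier structure—unwinding ``axiomatic'' into the universally-quantified satisfaction statement and observing that each lemma is applied uniformly across all theorems $\phi$—but this is routine. The substantive content has already been discharged upstream, in particular in Proposition~\ref{prop:mor-pres-truth}, on which both satisfaction lemmas depend.
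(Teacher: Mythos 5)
Your proof is correct and follows exactly the paper's route: the paper's own proof is the one-line citation of Lemmas~\ref{lem:coprod_and_satisfaction} and~\ref{lem:morphisms_and_satisfaction}, which is precisely the decomposition you spell out. Nothing to add.
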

\begin{proof}
By Lemmas~\ref{lem:coprod_and_satisfaction} and~\ref{lem:morphisms_and_satisfaction}.
\end{proof}

\section{Duality}\label{sec:duality}

  Complex algebras provide a method of turning \CK-frames into
  \CK-algebras. Can we also go in the reverse direction?
  In other words, given a \CK-algebra, can we construct a \CK-frame?
  Usually in intuitionistic (modal) logic, a frame constructed from an algebra
  is based on the collection of \emph{prime filters} of the
  algebra, see for example~\cite[Section~8.2]{ChaZak97}, \cite{WolZak99} or~\cite{Pal04b}.
  When ordered by inclusion, these form an intuitionistic Kripke frame.
  Inspired by the classical modal setting~\cite[Section~5.3]{BRV01},
  the modal relation can then be defined by letting $\ff{p} R \ff{q}$ if
  for all elements $a$ of the algebra we have that
  $\Box a \in \ff{p}$ implies $a \in \ff{q}$, and
  $a \in \ff{q}$ implies $\Diamond a \in \ff{p}$.

  However, in case of \CK\ this does not work because the frames constructed
  in this way validate formulas that are not derivable, such as
  $\Diamond(\phi \vee \psi) \to \Diamond \phi \vee \Diamond \psi$.
  To remedy this, the dual of a \CK-algebra is based on \emph{segments},
  i.e.~pairs $(\ff{p}, \Gamma)$ consisting of a prime filter $\ff{p}$
  together with a set of prime filters $\Gamma$ which intuitively encodes
  the successors of the segments.

  In Section~\ref{subsec:generalframes}, we start by defining general \CK-frames.
  These are \CK-frames with a collection of ``admissible'' subsets.
  The idea behind these is that denotations of formulas always need to be admissible.
  We then use segments to construct a general \CK-frame from a \CK-algebra,
  and show that the action of starting with a \CK-algebra, constructing its
  dual general \CK-frame, and then taking the \CK-algebra of admissible subsets
  gives rise to an isomorphism of \CK-algebras.
  
  Next, in Section~\ref{subsec:descriptive} we restrict the class of general
  \CK-frames to \emph{descriptive} \CK-frames in order to obtain a duality between
  \CK-algebras and descriptive \CK-frames. Finally, in Section~\ref{subsec:dual}
  we extend this to morphisms, resulting in a full duality between the category
  of \CK-algebras and homomorphisms on the one hand, and descriptive \CK-frames
  and general bounded morphisms on the other.

\subsection{General \CK-Frames}\label{subsec:generalframes}

  We generalise the notion of \CK-frame, by equipping it with a collection of
  admissible subsets. We then evaluate formulas only in admissible sets,
  rather than all upsets containing $\expl$.
  Abstractly, the collection of admissible subsets can be seen as a subalgebra
  of the complex algebra of the frame.

\begin{definition}
  A \emph{general \CK-frame} $\mo{G}$ is a tuple $(X, \expl, \leq, R, A)$ consisting of
  a \CK-frame $(X, \expl, \leq, R)$ and a set $A \subseteq \upp(\mo{X})$
  containing $\{ \expl \}$ and $X$, which is closed under $\cap, \cup$ and
  under the operations $\To, \dbox, \ddiamond$ as defined in Section~\ref{subsec:frames}.
\end{definition}

  It is straightforward to observe that a (complex) algebra may be defined on the set $A$ of
  admissible upsets of a general \CK-frame $\mo{G}$, as the definition ensures closure under
  all operations. We write this \CK-algebra as $\mo{G}^*$.
  
  We now show that every \CK-algebra $\alg{A}$ gives rise to a
  general \CK-frame, denoted by $\alg{A}_*$.

\begin{definition}
  Let $\alg{A}$ be a Heyting algebra.
  An \emph{ideal} is a nonempty downset of $A$ that is closed under finite joins.
  Dually, a \emph{filter} is a nonempty upset $\ff{p}$ of $A$ that is closed under finite
  meets. A filter is \emph{prime} if $a \vee b \in \ff{p}$ implies
  $a \in \ff{p}$ or $b \in \ff{p}$. A (prime) filter is \emph{proper} if it is not the whole of $A$, but
  we do not in general require our filters be proper.
  We write $\pf(A)$ for the set of prime filters of $A$.
  If $a \in A$ then we denote by $\theta(a)$ the set of prime filters containing $a$,
  that is, $\theta(a) := \{ \ff{p} \in \pf(A) \mid a \in \ff{p} \}$.
\end{definition}

\begin{lemma}[Prime Filter Lemma]\label{lem:prime}
  Let $\alg{A}$ be a Heyting algebra, $F$ be a filter of $\alg{A}$ and $I$ be
  an ideal of $\alg{A}$. If $F \cap I = \emptyset$ then there exists a
  prime filter $\ff{p}$ extending $F$ that is disjoint from $I$.
\end{lemma}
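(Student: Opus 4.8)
The plan is to prove the Prime Filter Lemma by a standard Zorn's Lemma argument, adapting the classical proof for distributive lattices to the Heyting algebra setting. First I would consider the collection
\[
\mathcal{P} = \{ F' \mid F' \text{ is a filter}, \; F \subseteq F', \; F' \cap I = \emptyset \}.
\]
This is nonempty since $F \in \mathcal{P}$ by hypothesis, and it is partially ordered by inclusion. I would verify that $\mathcal{P}$ is closed under unions of chains: given a chain $\{F'_k\}$ in $\mathcal{P}$, its union is again a filter (upward closure and closure under finite meets both transfer to the union of a chain because any two elements already lie in a common member of the chain), contains $F$, and remains disjoint from $I$ (since each $F'_k$ is). Hence every chain has an upper bound in $\mathcal{P}$, and Zorn's Lemma yields a maximal element $\ff{p} \in \mathcal{P}$.

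It then remains to show that the maximal filter $\ff{p}$ is prime. I would argue by contradiction: suppose $a \vee b \in \ff{p}$ but $a \notin \ff{p}$ and $b \notin \ff{p}$. The key move is to consider the filters generated by $\ff{p} \cup \{a\}$ and by $\ff{p} \cup \{b\}$, each of which strictly contains $\ff{p}$. By maximality of $\ff{p}$ in $\mathcal{P}$, neither of these enlarged filters can be disjoint from $I$, so each must meet $I$. Concretely, there exist $i_a, i_b \in I$ and $q, r \in \ff{p}$ with $i_a \geq q \wedge a$ and $i_b \geq r \wedge b$ (using that the filter generated by $\ff{p} \cup \{a\}$ consists of elements above some $q \wedge a$ with $q \in \ff{p}$, by closure under finite meets).

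The main obstacle, and the heart of the proof, is to derive a contradiction from these two facts using the lattice structure together with the fact that $I$ is a downset closed under finite joins. Setting $c = q \wedge r \in \ff{p}$, distributivity of the Heyting algebra gives
\[
c \wedge (a \vee b) = (c \wedge a) \vee (c \wedge b) \leq i_a \vee i_b,
\]
and since $a \vee b \in \ff{p}$ and $c \in \ff{p}$, the left-hand side lies in $\ff{p}$; but $i_a \vee i_b \in I$ because $I$ is closed under finite joins, so by upward closure $i_a \vee i_b \in \ff{p}$ as well, contradicting $\ff{p} \cap I = \emptyset$. This establishes primeness, completing the proof. The only delicacy to handle carefully is the explicit description of the filter generated by adjoining a single element, which relies on closure under finite meets, and the appeal to distributivity, which holds in every Heyting algebra.
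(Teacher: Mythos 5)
Your proof is correct: the Zorn's lemma argument, the explicit description of the filter generated by $\ff{p} \cup \{a\}$, and the use of distributivity (valid in any Heyting algebra) together with closure of $I$ under finite joins are all sound, and this is the standard argument. The paper itself does not prove this lemma but only cites Wolter and Zakharyaschev, and your proof is essentially the one found in such references, so there is nothing to reconcile.
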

\begin{proof}
See e.g.~Wolter and Zakharyaschev~\cite[Theorem 7.41]{WolZak97}.
\end{proof}
  
\begin{definition}
  Let $\alg{A}$ be a \CK-algebra and  $\Gamma \cup \{ \ff{p} \} \subseteq \pf(A)$.
  We call the pair $(\ff{p}, \Gamma)$ a \emph{segment} if for all $a \in A$:
  \begin{itemize}
    \item If $\Box a \in \ff{p}$ then $a \in \ff{q}$ for all $\ff{q} \in \Gamma$;
    \item If $\Diamond a \in \ff{p}$ then $a \in \ff{q}$ for some $\ff{q} \in \Gamma$.
  \end{itemize}
  Let $\SEG$ be the set of all segments.
  Let $\expl$ be $(A,\{A\})$,
  and define binary relations $\subsetsim$ and $R_A$ on $\SEG$ by
  \begin{align*}
    (\ff{p}, \Gamma) \subsetsim (\ff{q}, \Delta) &\iff \ff{p} \subseteq \ff{q} \\
    (\ff{p}, \Gamma) R_A (\ff{q}, \Delta) &\iff \ff{q} \in \Gamma
  \end{align*}
  Then we denote by $\alg{A}_+$ the \CK-frame
  \begin{equation*}
    \alg{A}_+ := (\SEG, \expl, \subsetsim, R_A).
  \end{equation*}
  Finally, for each $a \in A$ let $\Thet(a) = \{ (\ff{p}, \Gamma) \in \SEG \mid a \in \ff{p} \}$
  and $\Thet(A) = \{ \Thet(a) \mid a \in A \}$, and define
  \begin{equation*}
    \alg{A}_* := (\SEG, \expl, \subsetsim, R_A, \Thet(A)).
  \end{equation*}
\end{definition}

  It is clear that $\alg{A}_+ := (\SEG, \expl, \subsetsim, R_A)$ is a \CK-frame.
  To show that $\alg{A}_*$ is a general \CK-frame
  we must verify that $\Thet(A)$ is closed under the desired operations.
  We have $\Thet(\bot) = \{ \expl \} \in \Thet(A)$ and $\Thet(\top) = \SEG \in \Thet(A)$,
  and it easy to see that $\Thet(a) \cap \Thet(b) = \Thet(a \wedge b)$
  and $\Thet(a) \cup \Thet(b) = \Thet(a \vee b)$, so $\Thet(A)$ is closed under
  binary intersections and binary unions.
  Closure under $\To, \dbox$ and $\ddiamond$ will be verified in Lemma~\ref{lem:A}.
  
\begin{lemma}\label{lem:1}
  Let $\alg{A}$ be a \CK-algebra and $\ff{p} \in \pf(A)$.
  \begin{enumerate}
    \item $(\ff{p}, \{ A \})$ is a segment.
    \item If $\Diamond b \in \ff{p}$ and $\Diamond a \notin \ff{p}$,
          then there exists a prime filter $\ff{q}$ such that
          $\Box^{-1}(\ff{p}) \subseteq \ff{q}$ and $b \in \ff{q}$,
          but $a \notin \ff{q}$.
  \end{enumerate}
\end{lemma}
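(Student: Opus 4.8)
The plan is to dispatch the first item immediately and to concentrate the real work on the second. For item~(1), the set $\{A\}$ has $A$ as its only member, and \emph{every} element of the algebra lies in $A$; hence both segment conditions (the $\Box$-condition and the $\Diamond$-condition) hold vacuously, for any $\ff{p} \in \pf(A)$. No algebraic structure is needed here.

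For item~(2), I would invoke the Prime Filter Lemma (Lemma~\ref{lem:prime}), producing $\ff{q}$ by separating a suitable filter from a suitable ideal. First I would record that $\Box^{-1}(\ff{p}) := \{ c \in A \mid \Box c \in \ff{p} \}$ is itself a filter: it contains $\top$ because $\Box\top = \top \in \ff{p}$; it is closed under binary meets because $\Box c_1 \wedge \Box c_2 = \Box(c_1 \wedge c_2)$ and $\ff{p}$ is meet-closed; and it is an upset because $\Box$ is monotone, which follows from $\Box a \wedge \Box b = \Box(a \wedge b)$. I would then take $F$ to be the filter generated by $\Box^{-1}(\ff{p}) \cup \{ b \}$ and let $I$ be the principal ideal $\{ c \in A \mid c \leq a \}$. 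Since $\Box^{-1}(\ff{p})$ is already meet-closed, every element of $F$ lies above $c \wedge b$ for some $c$ with $\Box c \in \ff{p}$.

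The main obstacle is verifying $F \cap I = \emptyset$, which I would argue by contradiction. If some $x$ lies in both, then $c \wedge b \leq x \leq a$ for some $c$ with $\Box c \in \ff{p}$, and so $c \wedge b \leq a$. Here the two diamond (in)equations of Definition~\ref{def:CK-alg} combine to force $\Diamond a \in \ff{p}$: from $\Box c \wedge \Diamond b \leq \Diamond(c \wedge b)$, together with $\Box c \in \ff{p}$ and $\Diamond b \in \ff{p}$, we obtain $\Diamond(c \wedge b) \in \ff{p}$; and since $c \wedge b \leq a$ gives $(c \wedge b) \vee a = a$, the axiom $\Diamond x \leq \Diamond(x \vee y)$ (instantiated at $x = c \wedge b$, $y = a$) yields $\Diamond(c \wedge b) \leq \Diamond a$, whence $\Diamond a \in \ff{p}$. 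This contradicts the hypothesis $\Diamond a \notin \ff{p}$. Note that this single computation also subsumes the degenerate case $b \leq a$, via the choice $c = \top$.

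With disjointness established, the Prime Filter Lemma supplies a prime filter $\ff{q} \supseteq F$ with $\ff{q} \cap I = \emptyset$. Unwinding the three requirements: $\ff{q} \supseteq F \supseteq \Box^{-1}(\ff{p})$ gives the inclusion $\Box^{-1}(\ff{p}) \subseteq \ff{q}$; $b \in F \subseteq \ff{q}$ gives $b \in \ff{q}$; and $a \in I$ with $\ff{q} \cap I = \emptyset$ gives $a \notin \ff{q}$. These are precisely the asserted properties, completing the argument.
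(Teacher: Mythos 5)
Your proposal is correct and follows essentially the same route as the paper: item~(1) is immediate from the definition of a segment, and for item~(2) you separate the filter generated by $\Box^{-1}(\ff{p})\cup\{b\}$ from the principal ideal ${\downarrow}a$ via the Prime Filter Lemma, with disjointness forced by $\Box c \wedge \Diamond b \leq \Diamond(c\wedge b) \leq \Diamond a$. The extra details you supply (that $\Box^{-1}(\ff{p})$ is a filter, and that monotonicity of $\Diamond$ follows from $\Diamond x \leq \Diamond(x\vee y)$) are correct elaborations of steps the paper leaves implicit.
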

\begin{proof}
  The first item follows immediately from the definition of a segment.
  For the second, note that ${\downarrow}a := \{ c \in A \mid c \leq a \}$ is an ideal,
  and let $F$ be the smallest filter containing $\Box^{-1}(\ff{p})$ and $b$.
  We claim that $F \cap {\downarrow}a = \emptyset$.
  If this were not the case then, since $\Box^{-1}(\ff{p})$ is closed under finite meets,
  we could find $c \in \Box^{-1}(\ff{p})$ such that $c \wedge b \leq a$.
  Then $\Box c \wedge \Diamond b \leq \Diamond(c\wedge b) \leq \Diamond a$.
  But $\Box c \wedge \Diamond b\in\ff{p}$, which is upwards closed,
  so $\Diamond a \in \ff{p}$, a contradiction.
  So we can use the prime filter lemma~\ref{lem:prime}
  to extend $F$ to a prime filter $\ff{q}$ disjoint from ${\downarrow}a$.
\end{proof}

\begin{lemma}\label{lem:A}
  Let $\alg{A} = (A, \Box, \Diamond)$ be a \CK-algebra.
  Then for any $a, b \in A$ we have
  \begin{equation*}
    \Thet(a \to b) = \Thet(a) \To \Thet(b)
    \quad\text{and}\quad
    \Thet(\Box a) = \dbox \Thet(a)
    \quad\text{and}\quad
    \Thet(\Diamond a) = \ddiamond \Thet(a).
  \end{equation*}
\end{lemma}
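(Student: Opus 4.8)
The plan is to prove each of the three identities as an equality of subsets of $\SEG$, exploiting the fact that membership of a segment $(\ff{p},\Gamma)$ in any of the six sets involved depends only on its head filter $\ff{p}$. Indeed $(\ff{p},\Gamma)\in\Thet(c)$ iff $c\in\ff{p}$, and unfolding the frame operations on $\alg{A}_+=(\SEG,\expl,\subsetsim,R_A)$ shows that the right-hand sides quantify only over the head filters of $\subsetsim$-larger and $R_A$-related segments, since $(\ff{p},\Gamma)\subsetsim(\ff{q},\Delta)$ means $\ff{p}\subseteq\ff{q}$ and $(\ff{q},\Delta)\,R_A\,(\ff{r},\Theta)$ means $\ff{r}\in\Delta$. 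In each case the forward inclusion is routine and the reverse inclusion is the substance; I would prove the latter by contraposition, constructing a witnessing segment with the Prime Filter Lemma~\ref{lem:prime} and Lemma~\ref{lem:1}.

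For the implication, the inclusion $\Thet(a\to b)\subseteq\Thet(a)\To\Thet(b)$ follows because filters are closed under modus ponens: if $a\to b\in\ff{p}\subseteq\ff{q}$ and $a\in\ff{q}$ then $b\in\ff{q}$. For the converse I would assume $a\to b\notin\ff{p}$ and use the standard Heyting-algebra argument: the ideal ${\downarrow}b$ is disjoint from the filter generated by $\ff{p}\cup\{a\}$ (otherwise some $c\in\ff{p}$ satisfies $c\wedge a\le b$, forcing $a\to b\in\ff{p}$), so Lemma~\ref{lem:prime} yields a prime $\ff{q}\supseteq\ff{p}$ with $a\in\ff{q}$ and $b\notin\ff{q}$. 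By Lemma~\ref{lem:1}(1) the pair $(\ff{q},\{A\})$ is a segment, and it witnesses $(\ff{p},\Gamma)\notin\Thet(a)\To\Thet(b)$.

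For the box, $\Thet(\Box a)\subseteq\dbox\Thet(a)$ is immediate from the first segment condition. For the converse I would assume $\Box a\notin\ff{p}$ and observe that $\Box^{-1}(\ff{p})$ is a filter disjoint from ${\downarrow}a$ (monotonicity of $\Box$ gives that $b\le a$ and $\Box b\in\ff{p}$ imply $\Box a\in\ff{p}$), so Lemma~\ref{lem:prime} provides a prime $\ff{r}\supseteq\Box^{-1}(\ff{p})$ with $a\notin\ff{r}$. The key point is then to package $\ff{r}$ into a segment headed by $\ff{p}$: taking $\Delta=\{\ff{r},A\}$, both segment conditions hold, since every element of $\Delta$ contains $\Box^{-1}(\ff{p})$ and the improper filter $A$ witnesses every diamond in $\ff{p}$. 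Then $(\ff{p},\Delta)\,R_A\,(\ff{r},\{A\})$ with $a\notin\ff{r}$ shows $(\ff{p},\Gamma)\notin\dbox\Thet(a)$.

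The diamond case is where I expect the real work. The inclusion $\Thet(\Diamond a)\subseteq\ddiamond\Thet(a)$ again follows directly from the second segment condition. For the converse, assuming $\Diamond a\notin\ff{p}$, I must build a single segment $(\ff{p},\Delta)$ \emph{all} of whose successors avoid $a$, while still witnessing every diamond in $\ff{p}$. This is exactly what Lemma~\ref{lem:1}(2) delivers: for each $b$ with $\Diamond b\in\ff{p}$ it produces a prime filter $\ff{r}_b\supseteq\Box^{-1}(\ff{p})$ with $b\in\ff{r}_b$ and $a\notin\ff{r}_b$. Setting $\Delta=\{\ff{r}_b\mid\Diamond b\in\ff{p}\}$ makes $(\ff{p},\Delta)$ a segment: the box condition holds since each $\ff{r}_b$ contains $\Box^{-1}(\ff{p})$, and the diamond condition holds by construction. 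As no $\ff{r}\in\Delta$ contains $a$, this segment certifies $(\ff{p},\Gamma)\notin\ddiamond\Thet(a)$. The main obstacle is precisely this simultaneous fulfilment of all diamond-witnesses with uniform avoidance of $a$, which is why Lemma~\ref{lem:1}(2) was isolated beforehand.
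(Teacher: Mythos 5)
Your proposal is correct and follows essentially the same route as the paper: forward inclusions from the segment conditions and filter closure, reverse inclusions by constructing witnessing segments via the Prime Filter Lemma and Lemma~\ref{lem:1}, with the diamond case resting on Lemma~\ref{lem:1}(2) exactly as in the paper. The only (immaterial) deviations are that in the box case you use the tail $\{\ff{r},A\}$ where the paper uses $\Gamma\cup\{\ff{q}\}$, and your implication case spells out the Heyting-algebra/prime-filter argument that the paper states more loosely.
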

\begin{proof}
  \textit{Case for implication.}
    Suppose $(\ff{p}, \Gamma) \in \Thet(a \to b)$, so $a \to b \in \ff{p}$.
    Then for any segment $(\ff{q}, \Delta)$ such that
    $(\ff{p}, \Gamma) \subsetsim (\ff{q}, \Delta)$
    we have $\ff{p} \subseteq \ff{q}$, so $a \to b \in \ff{q}$.
    If $(\ff{q}, \Delta) \in \Thet(a)$ then $a \in \ff{q}$ ,
    hence by deductive closure of $\ff{q}$ also $b \in \ff{q}$,
    so that $(\ff{q}, \Delta) \in \Thet(b)$.
    This proves that $(\ff{p}, \Gamma) \in \Thet(a) \To \Thet(b)$.
    
    Conversely, suppose $(\ff{p}, \Gamma) \notin \Thet(a \to b)$.
    Then $a \to b \notin \ff{p}$, so by the deduction theorem $\ff{p}, a \not\vdash b$.
    Therefore the deduction theorem yields a prime filter $\ff{q}$ that
    contains $\ff{p}$ and $a$ but not $b$, so $\ff{q} \in \Thet(a)$
    but $\ff{q} \notin \Thet(b)$.
    We can extend $\ff{q}$ to a segment, say, $(\ff{q}, \{ A \})$.
    Since $\ff{p} \subseteq \ff{q}$ we have
    $(\ff{p}, \Gamma) \subsetsim (\ff{q}, \{ A \})$, which witnesses
    $(\ff{p}, \Gamma) \notin \Thet(a) \To \Thet(b)$.
  
  \medskip\noindent
  \textit{Case for boxes.}
  Let $(\ff{p}, \Gamma)$ be a segment.
  Suppose $(\ff{p}, \Gamma) \in \Thet(\Box a)$, i.e.~$\Box a \in \ff{p}$.
  Let $(\ff{p}, \Gamma) \subsetsim (\ff{q}, \Delta) R_A (\ff{s}, \Sigma)$.
  Then $\Box a \in \ff{q}$ by definition of $\subsetsim$
  and $a \in \ff{s}$ because $(\ff{q}, \Delta)$ is a segment
  and $\ff{s} \in \Delta$ by definition of $R_A$. So $(\ff{s}, \Sigma) \in \Thet(a)$.
  This proves $(\ff{p}, \Gamma) \in \dbox \Thet(a)$.
  
  For the converse, suppose $(\ff{p}, \Gamma) \notin \Thet(\Box a)$,
  so that $\Box a \notin \ff{p}$.
  Then $\Box^{-1}(\ff{p})$ is a filter of $A$ that does not contain $a$,
  so we can extend it to a prime filter $\ff{q}$ that does not contain $a$.
  Then $(\ff{p}, \Gamma \cup \{ \ff{q} \})$ and
  $(\ff{q}, \{ A \})$ are segments.
  Now we have $(\ff{p}, \Gamma) \subsetsim (\ff{p}, \Gamma \cup \{ \ff{q} \}) R_A (\ff{q}, \{ A \})$
  and by construction $a \notin \ff{q}$, so $(\ff{q}, \{ A \}) \notin \Thet(a)$.
  Therefore $(\ff{p}, \Gamma) \notin \dbox \Thet(a)$.

  \medskip\noindent
  \textit{Case for diamonds.}
    Let $(\ff{p}, \Gamma)$ be a segment.
    Suppose $(\ff{p}, \Gamma) \in \Thet(\Diamond a)$, i.e.~$\Diamond a \in \ff{p}$.
    Let $(\ff{p}, \Gamma) \subsetsim (\ff{q}, \Delta)$.
    Then $\ff{p} \subseteq \ff{q}$ so $\Diamond a \in \ff{q}$,
    hence by definition of a segment there exists some $\ff{s} \in \Delta$ such
    that $a \in \ff{s}$. This implies that $(\ff{q}, \Delta) R_A (\ff{s}, \{ A \})$
    and $(\ff{s}, \{ A \}) \in \Thet(a)$.
    Therefore $\ff{p} \in \ddiamond\Thet(a)$.
    
    Conversely, suppose $(\ff{p}, \Gamma) \notin \Thet(\Diamond a)$,
    so $\Diamond a \notin \ff{p}$.
    Then for each $\Diamond b \in \ff{p}$ we can use Lemma~\ref{lem:1}
    to find a prime filter $\ff{q}_b$ containing $\Box^{-1}(\ff{p})$ and $b$ but
    not $a$.
    Let $\Theta := \{ \ff{q}_b \mid \Diamond b \in \ff{p} \}$.
    Then by construction $( \ff{p}, \Theta)$
    is a segment such that $(\ff{p}, \Theta) R_A (\ff{q}, \Delta)$
    implies $(\ff{q}, \Delta) \notin \Thet(a)$.
    Moreover $(\ff{p}, \Gamma) \subsetsim (\ff{p}, \Theta)$,
    hence $(\ff{p}, \Gamma) \notin \ddiamond(\Thet(a))$.
\end{proof}

  We conclude from Lemma~\ref{lem:A} that $\alg{A}_*$ is well defined.
  Furthermore, the map $\Thet$ witnesses:

\begin{proposition}
  The map $\Thet : \alg{A} \to (\alg{A}_*)^*$ is an isomorphism.
\end{proposition}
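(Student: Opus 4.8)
The plan is to verify that $\Thet$ is a bijective homomorphism of \CK-algebras by checking three things separately: surjectivity, preservation of all the operations, and injectivity. Surjectivity is immediate, since the underlying set of $(\alg{A}_*)^*$ is by definition $\Thet(A) = \{ \Thet(a) \mid a \in A \}$, which is precisely the image of the map $\Thet$.

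For the homomorphism property, most of the work has already been carried out. Preservation of $\top$ and $\bot$ was recorded directly after the definition of $\alg{A}_*$ (namely $\Thet(\top) = \SEG$ and $\Thet(\bot) = \{ \expl \}$), as were the cases for $\wedge$ and $\vee$, which follow from the fact that prime filters respect finite meets and that primeness handles finite joins. The remaining cases, for $\to$, $\Box$ and $\Diamond$, are exactly the content of Lemma~\ref{lem:A}. Hence the only genuinely new obligation is injectivity.

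For injectivity I would argue contrapositively. Suppose $a \neq b$ in $A$; since the Heyting algebra carries a partial order, we may assume without loss of generality that $a \not\leq b$. The goal is to produce a segment lying in $\Thet(a)$ but not in $\Thet(b)$. Consider the principal filter ${\uparrow}a = \{ c \in A \mid a \leq c \}$ and the principal ideal ${\downarrow}b = \{ c \in A \mid c \leq b \}$. These are disjoint, since any $c$ in their intersection would satisfy $a \leq c \leq b$, contradicting $a \not\leq b$. The Prime Filter Lemma~\ref{lem:prime} then yields a prime filter $\ff{p}$ extending ${\uparrow}a$ and disjoint from ${\downarrow}b$, so that $a \in \ff{p}$ and $b \notin \ff{p}$. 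By Lemma~\ref{lem:1}(1) the pair $(\ff{p}, \{ A \})$ is a segment, and by construction it belongs to $\Thet(a)$ but not to $\Thet(b)$. Thus $\Thet(a) \neq \Thet(b)$, as required.

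I expect no serious obstacle here, precisely because Lemma~\ref{lem:A} has already absorbed the delicate combinatorics of segments into the verification that $\Thet$ preserves the modal operators; what remains is the standard separation argument via the Prime Filter Lemma. The only point worth care is the reduction of $a \neq b$ to a one-sided failure $a \not\leq b$ using the partial-order structure of $A$, which is what allows a single separating prime filter (rather than a pair) to suffice.
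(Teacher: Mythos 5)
Your proof is correct and follows exactly the route the paper intends: the paper states this proposition without proof, having already established surjectivity (by definition of $\Thet(A)$) and preservation of the operations (in the remarks preceding Lemma~\ref{lem:A} and in Lemma~\ref{lem:A} itself), so the only genuinely new obligation is injectivity. Your separation argument for that step --- reducing $a \neq b$ to $a \not\leq b$, applying the Prime Filter Lemma~\ref{lem:prime} to ${\uparrow}a$ and ${\downarrow}b$, and packaging the resulting prime filter as the segment $(\ff{p}, \{A\})$ via Lemma~\ref{lem:1} --- is the standard one and is carried out correctly.
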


\subsection{Descriptive \CK-Frames}\label{subsec:descriptive}

  We have seen that the double dual of a \CK-algebra is isomorphic to itself.
  In this section we investigate when the same is true for general \CK-frames.
  That is, we identify conditions for a general \CK-frame $\mo{G}$ ensuring
  that it is isomorphic to $(\mo{G}^*)_*$.
  (An isomorphism between two general \CK-frames is
  a bijective function that preserves and reflects the inconsistent world,
  both orders, and the
  admissible sets.)

\begin{definition}
  Let $\mo{G} = (X, \expl, \leq, R, A)$ be a general \CK-frame.
  Then for $x \in X$ and $B \subseteq X$, we define
  $\eta(x) = \{ a \in A \mid x \in a \}$
  and $\eta[B] = \{ \eta(a) \mid a \in B \}$.
\end{definition}

\begin{lemma}
  Let $\mo{G} = (X, \expl, \leq, R, A)$ be a general \CK-frame and
  $(\mo{G}^*)_* = (\SEG, \expl, \subsetsim, R_A, \Thet(A))$ its double dual.
  Then the function
  \begin{equation*}
    \Eta
      : X \to \SEG
      : x \mapsto ( \eta(x), \eta[R[x]] )
  \end{equation*}
  is a well-defined order-preserving map $\mo{G} \to (\mo{G}^*)_*$.
\end{lemma}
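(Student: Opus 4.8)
The plan is to establish the two assertions of the lemma separately: first that $\Eta$ genuinely lands in $\SEG$ (well-definedness), and then that it preserves $\leq$. Both follow by unwinding the definition of $\eta$ together with the way the Heyting and modal operations are computed in the complex algebra $\mo{G}^*$ (whose carrier is the admissible set $A$). As a preliminary I would check that for each $x \in X$ the set $\eta(x) = \{ a \in A \mid x \in a \}$ is a prime filter of $\mo{G}^*$. This is immediate: upward closure and closure under binary meets hold because joins and meets in $\mo{G}^*$ are unions and intersections and because admissible sets are upsets; nonemptiness holds since $X \in \eta(x)$; and primality holds because $a \vee b$ is computed as $a \cup b$, so $x \in a \cup b$ forces $x \in a$ or $x \in b$. (Note $\eta(x)$ need not be proper---indeed $\eta(\expl) = A$---but the paper's definition explicitly permits improper prime filters.) The same argument shows each $\eta(y)$ with $y \in R[x]$ is a prime filter, so that $\eta[R[x]] \subseteq \pf(\mo{G}^*)$.

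The crux is verifying the two segment conditions for the pair $(\eta(x), \eta[R[x]])$, and this is exactly where reflexivity of $\leq$ is used. For the box condition, suppose $\dbox a \in \eta(x)$, i.e.\ $x \in \dbox a$ (here $\dbox$ is the box of $\mo{G}^*$). Given any $\ff{q} = \eta(y) \in \eta[R[x]]$ with $xRy$, I instantiate the defining clause of $\dbox a$ with the chain $x \leq x R y$ to conclude $y \in a$, hence $a \in \eta(y) = \ff{q}$. For the diamond condition, suppose $\ddiamond a \in \eta(x)$, i.e.\ $x \in \ddiamond a$; instantiating its defining clause at $x \leq x$ produces some $z$ with $xRz$ and $z \in a$, so $\eta(z) \in \eta[R[x]]$ witnesses $a \in \eta(z)$. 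Thus $(\eta(x), \eta[R[x]])$ satisfies both clauses of the definition of a segment, and $\Eta$ is well defined.

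Order-preservation is then routine: if $x \leq y$ and $a \in \eta(x)$, then $x \in a$, and since $a$ is an upset we get $y \in a$, so $a \in \eta(y)$; hence $\eta(x) \subseteq \eta(y)$, which by the definition of $\subsetsim$ on $(\mo{G}^*)_*$ is precisely $\Eta(x) \subsetsim \Eta(y)$. I do not expect a genuine difficulty anywhere in this proof; the only point demanding care is to invoke reflexivity of $\leq$ when instantiating the $\dbox$ and $\ddiamond$ clauses, since that is what allows a world to see its own $R$-successors and thereby match the segment conditions to the relation $R[x]$.
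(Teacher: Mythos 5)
Your argument is essentially the paper's own proof: the segment conditions are verified by instantiating the defining clauses of $\dbox$ and $\ddiamond$ at $x \leq x$, and $\leq$-preservation follows from admissible sets being upsets. The explicit check that $\eta(x)$ is a prime filter of $\mo{G}^*$ (using that meets and joins in the complex algebra are intersections and unions) is a worthwhile addition that the paper leaves tacit, and your remark that $\eta(\expl)=A$ is an improper but permitted prime filter is accurate.

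There is one omission: the lemma's ``order-preserving'' covers \emph{both} relations, $\leq$ and $R$, as the paper's proof makes explicit and as the subsequent injectivity lemma relies on (``We have already seen that $\Eta$ is order-preserving with respect to $\leq$ and $R$''). You prove only the $\leq$ case. The $R$ case is a one-liner you should add: if $x R y$ then $\eta(y) \in \eta[R[x]]$, i.e.\ $\eta(y)$ lies in the tail of $\Eta(x)$, which is precisely the definition of $\Eta(x) \mathrel{R_A} \Eta(y)$. With that line included the proof is complete and matches the paper's.
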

\begin{proof}
  First we need to verify that it is well defined, i.e.~that
  $\Eta(x)$ is a segment for any $x \in X$.
  \begin{itemize}
    \item Suppose $\dbox a \in \eta(x)$.
          Then $x \in \dbox a$, so $x \leq y R z$ implies $z \in a$.
          In particular, $x R z$ implies $z \in a$, so $a \in \eta(z)$
          for all $\eta(z) \in \eta[R[x]]$.
    \item Suppose $\ddiamond a \in \eta(x)$.
          Then $x \in \ddiamond a$, so there exists some $z \in X$
          such that $x R z$ and $z \in a$. This implies that
          $\eta(z) \in \eta[R[x]]$ is such that $a \in \eta(z)$.
  \end{itemize}
  So $\Eta(x) = (\eta(x), \eta[R[x]])$ is a segment.
  
  Now let $x, y \in X$. If $x \leq y$ then $x \in a$ implies $y \in a$ for
  all $a \in A$, so $\eta(x) \subseteq \eta(y)$, and therefore $\Eta(x) \subsetsim \Eta(y)$.
  Lastly, suppose $x R y$. Then $\eta(y)$ is in the tail of $\Eta(x)$,
  and hence $\Eta(x) R_A \Eta(y)$.
\end{proof}

\begin{lemma}[Injectivity]\label{lem:Eta-inj}
  Let $\mo{G} = (X, \expl, \leq, R, A)$ be a general \CK-frame.
  If $\mo{G}$ satisfies:
  \begin{enumerate}
    \myitem{D1}\label{it:D1}
          for any $x,y \in X$, if $x \not\leq y$ then there exists $a \in A$ such that
          $x \in a$ and $y \notin a$;
  \end{enumerate}
  then $x \leq y$ iff $\Eta(x) \subsetsim \Eta(y)$.
  If moreover it satisfies
  \begin{enumerate}
    \myitem{D2}\label{it:D2}
          for all $x, y, z \in X$, if $x R y \sim z$ then $x R z$;
  \end{enumerate}
  then $\Eta$ is injective and an embedding with respect to $R$.
\end{lemma}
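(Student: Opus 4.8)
The plan is to reduce every claim to the level of the sets $\eta(x)$ of admissible upsets containing a world, exploiting that $\Eta(x) = (\eta(x), \eta[R[x]])$ and that, by the definitions of $\subsetsim$ and $R_A$, one has $\Eta(x) \subsetsim \Eta(y)$ exactly when $\eta(x) \subseteq \eta(y)$, and $\Eta(x) \, R_A \, \Eta(y)$ exactly when $\eta(y) \in \eta[R[x]]$. For the first biconditional, the forward direction $x \leq y \Rightarrow \eta(x) \subseteq \eta(y)$ is just the monotonicity of $\eta$ already recorded in the preceding lemma, since admissible sets are upsets. For the converse I argue contrapositively using \ref{it:D1}: if $x \not\leq y$, then \ref{it:D1} supplies an $a \in A$ with $x \in a$ and $y \notin a$, i.e.\ $a \in \eta(x)$ but $a \notin \eta(y)$, so $\eta(x) \not\subseteq \eta(y)$ and hence $\Eta(x) \subsetsim \Eta(y)$ fails.

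Next I treat the embedding with respect to $R$, i.e.\ $x R y$ iff $\Eta(x) \, R_A \, \Eta(y)$. Unfolding the right-hand side, $\Eta(x) \, R_A \, \Eta(y)$ asserts $\eta(y) = \eta(z)$ for some $z$ with $x R z$. The forward direction is immediate by taking $z = y$. For the converse, such a $z$ satisfies $\eta(z) = \eta(y)$, so applying the first biconditional in both directions gives $z \sim y$; then \ref{it:D2}, which reads $x R z \sim y \Rightarrow x R y$, delivers $x R y$. This is the step in which \ref{it:D2} is indispensable.

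Finally, for injectivity I suppose $\Eta(x) = \Eta(y)$. Equality of the first coordinates gives $\eta(x) = \eta(y)$, hence $x \leq y$ and $y \leq x$ by the first biconditional, so $x \sim y$. Equality of the second coordinates gives $\eta[R[x]] = \eta[R[y]]$; feeding each $R$-successor of $x$ through \ref{it:D1} and \ref{it:D2} exactly as in the previous paragraph yields $R[x] \subseteq R[y]$, and symmetrically $R[x] = R[y]$. I expect the last mile to be the main obstacle: separating two worlds that already lie in a common cluster and share the same $R$-successor set. Because $\leq$ is only a preorder, the first coordinate $\eta(\cdot)$ cannot discriminate $\sim$-equivalent worlds, so the entire weight of injectivity must be borne by the second coordinate $\eta[R[x]]$, and it is precisely here that \ref{it:D2}, together with the way the $R$-successors are forced to record the cluster structure, has to be brought to bear. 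I would therefore isolate this cluster case and verify with care that two distinct worlds sharing a cluster are discriminated by their successor data $\eta[R[x]]$, treating this verification as the crux of the lemma.
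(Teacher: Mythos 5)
Your proofs of the two reflection claims are correct and coincide with the paper's argument step for step: the $\leq$-biconditional is the contrapositive application of (D1), and the $R$-case is exactly the paper's chain ``$\Eta(x) \, R_A \, \Eta(y)$ gives $\eta(y) = \eta(z)$ for some $z \in R[x]$, hence $x R z \sim y$ (via the first biconditional used in both directions), hence $x R y$ by (D2)''.

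The point worth dwelling on is injectivity, which you rightly single out as the crux and do not manage to close. Be aware that the paper's own proof does not close it either: it establishes only that $\Eta$ reflects $\leq$ and $R$, and is silent on how to separate two worlds of one cluster that have the same successor data. Your suspicion is in fact well founded, because injectivity does not follow from (D1) and (D2). Take $X = \{x, y, \expl\}$ with $x \sim y$ and $x, y \leq \expl$, with $R[x] = R[y] = \{x, y\}$ and $R[\expl] = \{\expl\}$, and $A = \upp(\mo{X}) = \{\{\expl\}, X\}$. This is a general \CK-frame satisfying (D1) and (D2), yet $\eta(x) = \eta(y) = \{X\}$ and $\eta[R[x]] = \eta[R[y]]$, so $\Eta(x) = \Eta(y)$ while $x \neq y$. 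Hence the ``last mile'' you isolate cannot be walked from the stated hypotheses: what (D1) and (D2) actually deliver is that $\Eta(x) = \Eta(y)$ forces $x \sim y$ and $R[x] = R[y]$ (your derivation of this is correct), i.e.\ that $\Eta$ is injective only up to that identification. Closing the gap would require an additional hypothesis ruling out distinct worlds in a single cluster with identical $R$-successor sets; neither your argument nor the paper's supplies one, so on this point your attempt is no weaker than the source, and your instinct to treat it as the genuinely problematic part of the lemma is the right one.
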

\begin{proof}
  We have already seen that $\Eta$ is order-preserving with respect to
  $\leq$ and $R$, so we only prove it to be order-reflecting.
  If $x \not\leq y$ then it follows immediately from~\eqref{it:D1} that
  $\eta(x) \not\subseteq \eta(y)$, hence $\Eta(x) \not\subsetsim \Eta(y)$.
  If $\Eta(x) R_A \Eta(y)$ then $\eta(y) \in \eta[R[x]]$,
  so $\eta(y) = \eta(z)$ for some $z \in R[x]$.
  This implies $x R z \sim y$, hence by~\eqref{it:D2} $x R y$.
\end{proof}

\begin{lemma}[Surjectivity]\label{lem:Eta-surj}
  Let $\mo{G} = (X, \expl, \leq, R, A)$ be a general \CK-frame.
  For each $a \in A$ define $-a = X \setminus a$, and let
  $-A = \{ -a \mid a \in A \}$.
  If $\mo{G}$ satisfies:
  \begin{enumerate}
    \myitem{D3}\label{it:D3}
          if $B \subseteq A \cup -A$ has the finite intersection property,
          then $\bigcap B \neq \emptyset$;
  \end{enumerate}
  then for every $(\ff{p}, \Gamma) \in \SEG$ there exists an $x \in X$ such that
  $x \sim (\ff{p}, \Gamma)$.
  Property~\eqref{it:D3} is also known as \emph{compactness}.
  If moreover,
  \begin{enumerate}
    \myitem{D4}\label{it:D4}
          if $x \in X$ and $U \subseteq X$ is such that for all $a \in A$:
          \begin{itemize}
            \item $x \in \dbox a$ implies $U \subseteq a$,
            \item $x \in \ddiamond a$ implies $U \cap a \neq \emptyset$,
          \end{itemize}
          then there exists a $x' \in X$ such that $x \sim x'$ and $R[x'] = U$;
  \end{enumerate}
  then $\Eta$ is surjective.
\end{lemma}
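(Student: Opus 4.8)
The plan is to prove the two claims in turn, the second building on the first. I read the conclusion of the first claim, ``$x \sim (\ff{p}, \Gamma)$'', as $\Eta(x) \sim (\ff{p}, \Gamma)$ in the double dual; since $\subsetsim$ compares only first coordinates, this is equivalent to $\eta(x) = \ff{p}$. Observe first that $\eta(x)$ is always a prime filter of $\mo{G}^*$: upward closure and closure under meets are immediate from the definition of $\eta$, and primeness follows from $x \in a \cup b$ iff $x \in a$ or $x \in b$. So the content of the first claim is exactly that $\eta$ is surjective onto $\pf(\mo{G}^*)$, and it suffices to realise a given $\ff{p}$ by some world.

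For this I would take the family
\[ B := \ff{p} \cup \{ -a \mid a \in A \setminus \ff{p} \} \subseteq A \cup -A, \]
noting that any $x \in \bigcap B$ satisfies $x \in a$ for $a \in \ff{p}$ and $x \notin a$ for $a \notin \ff{p}$, i.e.\ $\eta(x) = \ff{p}$. Thus the whole of the first claim reduces to checking that $B$ has the finite intersection property and invoking compactness~\eqref{it:D3}. Given $a_1, \dots, a_n \in \ff{p}$ and $b_1, \dots, b_m \notin \ff{p}$, set $a = a_1 \wedge \dots \wedge a_n$ and $b = b_1 \vee \dots \vee b_m$ in $\mo{G}^*$ (so meets are intersections and joins unions). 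Then $a \in \ff{p}$ because $\ff{p}$ is a filter, while $b \notin \ff{p}$ because $\ff{p}$ is prime; were $a \subseteq b$ we would get $b \in \ff{p}$ by upward closure, a contradiction, so $a \setminus b \neq \emptyset$ and the finite intersection $a_1 \cap \dots \cap a_n \cap (-b_1) \cap \dots \cap (-b_m)$ is nonempty.

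For the second claim I use the first to attach the correct successor set. Fixing a segment $(\ff{p}, \Gamma)$, the first claim yields a world $x$ with $\eta(x) = \ff{p}$. For each $\ff{q} \in \Gamma$ the pair $(\ff{q}, \{ A \})$ is a segment by Lemma~\ref{lem:1}, so the first claim again provides a world $x_{\ff{q}}$ with $\eta(x_{\ff{q}}) = \ff{q}$; putting $U = \{ x_{\ff{q}} \mid \ff{q} \in \Gamma \}$ gives $\eta[U] = \Gamma$ by construction. It then remains to verify the hypotheses of~\eqref{it:D4} for this $x$ and $U$. Since $\dbox a, \ddiamond a \in A$, we have $x \in \dbox a$ iff $\dbox a \in \eta(x) = \ff{p}$, and similarly for $\ddiamond$; reading the segment conditions on $(\ff{p}, \Gamma)$ through these equivalences, $\dbox a \in \ff{p}$ forces $a \in \ff{q}$, i.e.\ $x_{\ff{q}} \in a$, for every $\ff{q} \in \Gamma$, whence $U \subseteq a$, while $\ddiamond a \in \ff{p}$ forces $a \in \ff{q}$ for some $\ff{q}$, whence $U \cap a \neq \emptyset$. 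Thus~\eqref{it:D4} supplies $x'$ with $x \sim x'$ and $R[x'] = U$. As $x \leq x' \leq x$ and $\eta$ preserves $\leq$, we get $\eta(x') = \eta(x) = \ff{p}$, and therefore $\Eta(x') = (\eta(x'), \eta[R[x']]) = (\ff{p}, \eta[U]) = (\ff{p}, \Gamma)$, proving that $\Eta$ is surjective.

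The filter-and-primeness bookkeeping is routine; the step deserving the most care is the last one, where $U$ must realise $\Gamma$ \emph{exactly} while simultaneously meeting the premises of~\eqref{it:D4}. The observation that makes this go through cleanly is the identity $x \in \dbox a \iff \dbox a \in \ff{p}$ (and its $\ddiamond$ analogue), which lets the frame-level hypotheses of~\eqref{it:D4} be read off directly from the algebraic conditions defining the segment $(\ff{p}, \Gamma)$.
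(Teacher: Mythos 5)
Your proof is correct and follows essentially the same route as the paper: realise the head $\ff{p}$ via compactness applied to $\ff{p} \cup \{-b \mid b \in A \setminus \ff{p}\}$ (reducing the finite intersection property to $a \setminus b \neq \emptyset$ for $a \in \ff{p}$, $b \notin \ff{p}$ via primeness and upward closure), then build $U$ realising $\Gamma$ and check the hypotheses of~\eqref{it:D4} by translating the segment conditions through $x \in \dbox a \iff \dbox a \in \ff{p}$. The only cosmetic differences are that the paper treats the improper filter $\ff{p} = A$ separately and takes $U$ to be the full preimage $\{y \mid \eta(y) \in \Gamma\}$ rather than a set of chosen representatives; both choices work equally well.
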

\begin{proof}
  Suppose $\mo{G}$ satisfies~\eqref{it:D3}
  and let $(\ff{p}, \Gamma) \in \SEG$.
  We need to find an element $x$ such that $x \in a$ if and only if $a \in \ff{p}$.
  If $\ff{p} = \expl$ (in $\SEG$) then we can take $x = \expl$ (in $\mo{G}$),
  so we may assume that $\ff{p}$ is a proper prime filter.
  Let $\ms{C} = \ff{p} \cup \{ -b \mid b \in (A \setminus \ff{p}) \}$.
  Then we seek some $x \in \bigcap \ms{C}$.
  By~\eqref{it:D3} it suffices to prove that $\ms{C}$ has the finite intersection property.
  For any finite $B_1 \subseteq \ff{p}$ we have $\bigcap B_1 \in \ff{p}$
  and $\bigcap B_1 \neq \emptyset$ because $\ff{p}$ is a proper filter.
  Similarly, for any finite $B_2 \subseteq (A \setminus \ff{p})$ we have
  $\bigcup B_2 \notin \ff{p}$ because $\ff{p}$ is prime,
  hence $\{ -b \mid b \in A \setminus \ff{p} \}$ has the finite intersection property.
  So it suffices to prove that for any $a \in \ff{p}$ and $b \in (A \setminus \ff{p})$
  we have
  \begin{equation*}
    a \cap -b \neq \emptyset.
  \end{equation*}
  Suppose towards a contradiction that $a \cap -b = \emptyset$.
  Then $a \subseteq b$, so that $a \in \ff{p}$ implies $b \in \ff{p}$, a contradiction.
  This proves that $\ms{C}$ has the finite intersection property,
  hence by~\eqref{it:D3} $\bigcap \ms{C} \neq \emptyset$ and by construction we
  have $\eta(x) = \ff{p}$, so $\Eta(x) \sim (\ff{p}, \Gamma)$, for any $x \in \bigcap \ms{C}$.
  
  Next, assume that additionally~\eqref{it:D4} holds and let $(\ff{p}, \Gamma) \in \SEG$.
  Let $x \in X$ be such that $\eta(x) = \ff{p}$,
  and $U := \{ y \in X \mid \eta(y) \in \Gamma \}$.
  Then $\eta[U] = \Gamma$ by~\eqref{it:D3}.
  Moreover, if $a \in A$ and $x \in \dbox a$ then
  $\dbox a \in \ff{p}$ so $a \in \eta(y)$ for every $y \in U$,
  hence $y \in a$. Therefore $U \subseteq a$.
  Also, if $\ddiamond a \in \ff{p}$ then there exists a $\eta(y) \in \Gamma$
  such that $a \in \eta(y)$, i.e.~$y \in a$.
  Therefore $a \cap U \neq \emptyset$.
  So by~\eqref{it:D4} there exists $x' \in X$ such that $x \sim x'$ and $R[x'] = U$,
  and by design $\Eta(x') = (\eta(x), \eta[R[x]]) = (\ff{p}, \eta[U]) = (\ff{p}, \Gamma)$.
  So $\Eta$ is surjective.
\end{proof}

\begin{definition}
  A \emph{descriptive \CK-frame} is a general \CK-frame that satisfies
  \eqref{it:D1}, \eqref{it:D2}, \eqref{it:D3} and~\eqref{it:D4}.
\end{definition}

\begin{lemma}\label{lem:A+descr}
  If $\alg{A}$ is a \CK-algebra, then $\alg{A}_*$ is a descriptive \CK-frame.
\end{lemma}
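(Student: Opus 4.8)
The plan is to verify the four defining conditions \eqref{it:D1}--\eqref{it:D4} for the general \CK-frame $\alg{A}_* = (\SEG, \expl, \subsetsim, R_A, \Thet(A))$, where the admissible sets are $\Thet(A)$. The first two are immediate. For \eqref{it:D1}, suppose $(\ff{p}, \Gamma) \not\subsetsim (\ff{q}, \Delta)$, i.e.\ $\ff{p} \not\subseteq \ff{q}$; then any witness $a \in \ff{p} \setminus \ff{q}$ gives $(\ff{p}, \Gamma) \in \Thet(a)$ and $(\ff{q}, \Delta) \notin \Thet(a)$. For \eqref{it:D2}, observe that two segments are $\sim$-equivalent exactly when they share a head, and since $R_A$ refers only to the head of its target, $(\ff{p}, \Gamma) R_A (\ff{q}, \Delta) \sim (\ff{s}, \Sigma)$ forces $\ff{s} = \ff{q} \in \Gamma$, hence $(\ff{p}, \Gamma) R_A (\ff{s}, \Sigma)$.

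The substance lies in \eqref{it:D3} (compactness). Suppose $B \subseteq \Thet(A) \cup -\Thet(A)$ has the finite intersection property, and write its positive part as $\{\Thet(a) \mid a \in S\}$ and its negative part as $\{\SEG \setminus \Thet(b) \mid b \in T\}$ for suitable $S, T \subseteq A$. If $T = \emptyset$ then $\expl \in \bigcap B$, since $\expl = (A, \{A\})$ lies in every $\Thet(a)$. Otherwise, let $F$ be the filter generated by $S$ and $I$ the ideal generated by $T$. I would first show $F \cap I = \emptyset$: were some $c \in F \cap I$, we would have $a_1 \wedge \cdots \wedge a_m \leq c \leq b_1 \vee \cdots \vee b_n$ for finitely many $a_i \in S$ and $b_j \in T$; but the finite intersection property applied to $\{\Thet(a_i)\} \cup \{\SEG \setminus \Thet(b_j)\}$ produces a segment with head $\ff{r}$ containing every $a_i$ and omitting every $b_j$, and then $\bigwedge_i a_i \in \ff{r}$ forces $\bigvee_j b_j \in \ff{r}$, so primeness gives some $b_j \in \ff{r}$, a contradiction. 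With $F \cap I = \emptyset$ established, the Prime Filter Lemma~\ref{lem:prime} yields a prime filter $\ff{p} \supseteq F$ disjoint from $I \supseteq T$; by Lemma~\ref{lem:1} the pair $(\ff{p}, \{A\})$ is a segment, and it lies in $\bigcap B$ by construction.

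For \eqref{it:D4}, given a segment $x = (\ff{p}, \Gamma)$ and $U \subseteq \SEG$ satisfying the two hypotheses, I would put $\Gamma' = \{\ff{q} \in \pf(A) \mid (\ff{q}, \Delta) \in U \text{ for some } \Delta\}$ and take $x' = (\ff{p}, \Gamma')$. Using Lemma~\ref{lem:A} to rewrite the hypotheses as ``$\Box a \in \ff{p}$ implies $a \in \ff{q}$ for all $\ff{q} \in \Gamma'$'' and ``$\Diamond a \in \ff{p}$ implies $a \in \ff{q}$ for some $\ff{q} \in \Gamma'$'', one checks directly that $(\ff{p}, \Gamma')$ is a segment, so $x' \in \SEG$ and $x \sim x'$. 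By definition of $R_A$ we then have $R_A[x'] = \{(\ff{q}, \Delta) \in \SEG \mid \ff{q} \in \Gamma'\}$, and the inclusion $U \subseteq R_A[x']$ is immediate. The hard part is the reverse inclusion, which demands that $U$ be closed under $\sim$ (a union of head-fibers), since that is precisely the shape of $R_A[x']$. As membership in any admissible set $\Thet(a)$ depends only on the head of a segment, the two hypotheses cannot separate $\sim$-equivalent worlds, so they leave $U$ only determined up to its $\sim$-saturation; I therefore expect to argue that it suffices to realise the $\sim$-closed sets $U$, for which $R_A[x'] = U$ holds exactly. Pinning down this saturation step cleanly is where I anticipate the main difficulty.
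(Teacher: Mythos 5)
Your treatment of \eqref{it:D1}, \eqref{it:D2} and \eqref{it:D3} matches the paper's proof in all essentials; the only cosmetic difference is that the paper derives $F \cap I = \emptyset$ directly from the inequality $a_1 \wedge \cdots \wedge a_n \leq c_1 \vee \cdots \vee c_m$ (any prime filter containing the meet would contain some $c_j$, so the corresponding finite intersection would be empty), whereas you reach the same contradiction by first producing a witnessing segment; the two arguments are interchangeable.

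The divergence is at \eqref{it:D4}, and there you have not so much missed the paper's idea as exposed the fact that the paper does not supply one: its entire proof of \eqref{it:D4} reads ``$(\ff{p}, U)$ is also a segment, and by construction $R_A[(\ff{p}, U)] = U$''. Since $U$ is a set of \emph{segments} while the tail of a segment must be a set of \emph{prime filters}, the pair $(\ff{p}, U)$ only makes sense as your $(\ff{p}, \Gamma')$ with $\Gamma'$ the set of heads occurring in $U$, and then $R_A[(\ff{p},\Gamma')]$ is the set of all segments headed by a member of $\Gamma'$ --- the $\sim$-saturation of $U$, exactly as you observe. Your worry is genuine: the hypotheses of \eqref{it:D4} constrain $U$ only through the sets $\Thet(a)$, whose membership depends solely on heads, so they cannot force $U$ to be saturated; whenever some $\ff{q} \in \Gamma'$ heads more than one segment (which is generic, since $(\ff{q},\{A\})$ is always one of them), an unsaturated $U$ satisfying both hypotheses exists and equals no $R_A[x']$. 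The way out is the one you gesture at: condition \eqref{it:D4} is invoked only once, in Lemma~\ref{lem:Eta-surj}, and there only for $U = \{ y \mid \eta(y) \in \Gamma \}$, which is $\sim$-saturated by construction; for $\sim$-saturated $U$ your $(\ff{p},\Gamma')$ gives $R_A[(\ff{p},\Gamma')] = U$ on the nose and the case closes. So to finish you should either prove \eqref{it:D4} restricted to $\sim$-saturated $U$ and note that this weaker property is all the duality uses, or record that the literal \eqref{it:D4} fails for $\alg{A}_*$. As written your proposal stops short of either, so your proof of \eqref{it:D4} is incomplete --- but the incompleteness is inherited from the paper's one-line argument rather than introduced by you.
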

\begin{proof}
  We know that $\alg{A}_*$ is a general \CK-frame, so we only have to verify
  that it satisfies~\eqref{it:D1} up to~\eqref{it:D4}.
  \begin{enumerate}
    \item[\eqref{it:D1}]
              Suppose $(\ff{p}, \Gamma) \not\subsetsim (\ff{q}, \Delta)$.
              Then $\ff{p} \not\subseteq \ff{q}$, so there
              exists some $a \in A$ such that $a \in \ff{p}$ but $a \notin \ff{q}$.
              This implies that $\Thet(a) \in \Thet(A)$ is such that
              $(\ff{p}, \Gamma) \in \Thet(a)$ and $(\ff{q}, \Delta) \notin \Thet(a)$.
    \item[\eqref{it:D2}] 
              Suppose $(\ff{p}, \Gamma) R_A (\ff{q}, \Delta) \sim (\ff{q}', \Delta')$.
              Then $\ff{q} \in \Gamma$ and $\ff{q} = \ff{q}'$,
              hence $(\ff{p}, \Gamma) R_A (\ff{q}', \Delta')$.
    \item[\eqref{it:D3}]
              Suppose $B \subseteq \Thet(A) \cup -\Thet(A)$ has the finite
              intersection property.
              We start by constructing a prime filter in $\bigcap B$.
              To this end, define sets $F, I \subseteq A$ by
              \begin{align*}
                F &= {\uparrow} \{ a_1 \wedge \cdots \wedge a_n 
                  \mid n \in \mathbb N \text{ and } \Thet(a_1), \ldots, \Thet(a_n) \in B \} \\
                I &= {\downarrow} \{ c_1 \vee \cdots \vee c_n \in A
                  \mid n \in \mathbb N \text{ and } -\Thet(c_1), \ldots, -\Thet(c_n) \in B \}.
              \end{align*}
              Then $F$ is a filter and $I$ is an ideal, by construction.
              Moreover, $F \cap I = \emptyset$, for if this were not the case then
              there would exist
              $\Thet(a_1), \ldots, \Thet(a_n), -\Thet(c_1), \ldots, -\Thet(c_m) \in B$
              such that $a_1 \wedge \cdots \wedge a_n \leq -c_1 \vee \cdots \vee -c_m$,
              hence
              \begin{equation*}
                \Thet(a_1) \cap \cdots \cap \Thet(a_n)
                  \cap -\Thet(c_1) \cap \cdots \cap -\Thet(c_m) = \emptyset,
              \end{equation*}
              contradicting the assumption that $B$ has the finite intersection property.
              So we can use the prime filter lemma to find a prime filter $\ff{p}$
              containing $F$ which is disjoint from $I$.
              By construction any segment of the form $(\ff{p}, \Delta)$ is in $\bigcap B$.
              So extending $\ff{p}$ to a segment, say, $(\ff{p}, \{ \expl \})$,
              proves $\bigcap B \neq \emptyset$.
    \item[\eqref{it:D4}]
               Let $(\ff{p}, \Gamma) \in \SEG$ and $U \subseteq \SEG$ be such that
              the given conditions are satisfied.
              Then by definition $(\ff{p}, U)$ is also a segment,
              and by construction $R_A[(\ff{p}, U)] = U$.
  \end{enumerate}
  So $\alg{A}_*$ is descriptive.
\end{proof}

\begin{proposition}
  Let $\mo{G}$ be a general \CK-frame. Then $\mo{G} \cong (\mo{G}^*)_*$ if and
  only if $\mo{G}$ is descriptive.
\end{proposition}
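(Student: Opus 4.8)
The plan is to prove both implications, using the map $\Eta : \mo{G} \to (\mo{G}^*)_*$ introduced above as the candidate isomorphism for the ``if'' direction, and the fact that every double dual $(\mo{G}^*)_*$ is already known to be descriptive for the ``only if'' direction. The substantive analytic work has been isolated into the injectivity and surjectivity lemmas, so what remains is mostly to assemble these and to promote ``$\Eta$ is a bijective bounded morphism of the underlying \CK-frames'' to ``$\Eta$ is an isomorphism of general \CK-frames''.

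For the ``if'' direction, assume $\mo{G}$ is descriptive, so that it satisfies \eqref{it:D1}--\eqref{it:D4}. I would first invoke Lemma~\ref{lem:Eta-inj} (using \eqref{it:D1} and \eqref{it:D2}) to conclude that $\Eta$ is injective and reflects both $\leq$ and $R$ (as $\subsetsim$ and $R_A$), which together with the order-preservation already established makes $\Eta$ an embedding for both relations; then Lemma~\ref{lem:Eta-surj} (using \eqref{it:D3} and \eqref{it:D4}) shows $\Eta$ is surjective, so $\Eta$ is a bijection preserving and reflecting $\subsetsim$ and $R_A$. It then remains to check the two ``general-frame'' conditions an isomorphism must respect: the inconsistent world and the admissible sets. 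For the former I would compute $\eta(\expl) = A$ (every admissible set is an upset containing $\expl$) and $R[\expl] = \{ \expl \}$, whence $\Eta(\expl) = (A, \{ A \})$, which is precisely the exploding world of $(\mo{G}^*)_*$.

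For the admissible sets, the key identity is $\Eta[a] = \Thet(a)$ for every $a \in A$: indeed $x \in a$ is equivalent to $a \in \eta(x)$, i.e.~to $\Eta(x) \in \Thet(a)$, so bijectivity of $\Eta$ yields $\Eta[a] = \Thet(a)$. Since $\Thet(A)$ is by definition the admissible collection of $(\mo{G}^*)_*$, the subset-image map $a \mapsto \Eta[a]$ carries $A$ bijectively onto $\Thet(A)$, giving preservation and (by injectivity of $\Eta$) reflection of admissibility. Hence $\Eta$ is an isomorphism of general \CK-frames. For the ``only if'' direction, note that $(\mo{G}^*)_*$ is of the form $\alg{A}_*$ for the \CK-algebra $\alg{A} = \mo{G}^*$, so by Lemma~\ref{lem:A+descr} it is descriptive. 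As \eqref{it:D1}--\eqref{it:D4} are phrased entirely in terms of $\leq$, $R$, the admissible sets, and the derived operators $\dbox, \ddiamond$, each is invariant under isomorphism of general \CK-frames; thus any $\mo{G}$ isomorphic to the descriptive frame $(\mo{G}^*)_*$ is itself descriptive.

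The main obstacle I anticipate is not in the logical skeleton of either direction, since the heavy lifting is already packaged in Lemmas~\ref{lem:Eta-inj}, \ref{lem:Eta-surj} and~\ref{lem:A+descr}, but in the bookkeeping around admissibility. Concretely, one must verify the correspondence $\Eta[a] = \Thet(a)$ and state precisely what preservation and reflection of admissibility mean for a bijection, and for the converse one must make explicit that isomorphism transports each condition \eqref{it:D1}--\eqref{it:D4}, including those phrased via $\dbox$ and $\ddiamond$, which in turn requires checking that these derived operators are respected by the isomorphism.
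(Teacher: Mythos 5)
Your proof is correct and follows essentially the same route as the paper's, which simply cites Lemmas~\ref{lem:Eta-inj}, \ref{lem:Eta-surj} and~\ref{lem:A+descr}; you additionally spell out the checks the paper leaves implicit (that $\Eta$ sends $\expl$ to $(A,\{A\})$, that $\Eta[a]=\Thet(a)$ carries admissible sets to admissible sets, and that descriptiveness is invariant under isomorphism of general frames), all of which are accurate.
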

\begin{proof}
  If $\mo{G}$ is descriptive then $\Eta : \mo{G} \to (\mo{G}^*)_*$ is an
  isomorphism by Lemmas~\ref{lem:Eta-inj} and~\ref{lem:Eta-surj}.
  Conversely, $(\mo{G}^*)_*$ is descriptive by Lemma~\ref{lem:A+descr} so
  if $\mo{G} \cong (\mo{G}^*)_*$ then $\mo{G}$ is also descriptive.
\end{proof}

\subsection{Full Duality}\label{subsec:dual}
  
  The previous two subsections established a duality between the class of
  \CK-algebras and that of descriptive \CK-frames.
  We now extend this to a categorical duality by extending the duality
  to morphisms.

\begin{definition}
  A \emph{general bounded morphism} from between general \CK-frames
  $(X, \expl, \leq, R, A)$ and
  $(X', \expl', \leq', R', A')$ is a bounded morphism
  $f : (X, \expl, \leq, R) \to (X', \expl', \leq', R')$ such that
  $f^{-1}(a') \in A$ for all $a' \in A'$.
\end{definition}

\begin{lemma}\label{lem:gbm-to-homom}
  Let $\mo{G} = (X, \expl, \leq, R, A)$ and $\mo{G}' = (X', \expl', \leq', R', A')$
  are two general \CK-frames and $f : \mo{G} \to \mo{G}'$ a bounded morphism.
  Then $f^* = f^{-1} : (\mo{G}')^* \to \mo{G}^*$ is a homomorphism of \CK-algebras.
\end{lemma}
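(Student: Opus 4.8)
The plan is to show that $f^* = f^{-1}$ is a well-defined \CK-algebra homomorphism $(\mo{G}')^* \to \mo{G}^*$. Recall that $(\mo{G}')^*$ and $\mo{G}^*$ are the complex algebras restricted to the admissible sets $A'$ and $A$ respectively. Almost all of the work has already been done in Lemma~\ref{lem:bm_props}, whose first item establishes that the inverse image map $f^{-1} : (\mo{X}')^+ \to \mo{X}^+$ of an underlying bounded morphism is a homomorphism of \CK-algebras between the \emph{full} complex algebras. So the strategy is to argue that the general bounded morphism condition lets us restrict this known homomorphism to the admissible subalgebras.

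The key steps are as follows. First I would observe that, by the defining condition of a general bounded morphism, $f^{-1}(a') \in A$ for every $a' \in A'$; this is precisely what guarantees that $f^{-1}$ \emph{corestricts} to a function $A' \to A$, i.e.~that $f^* : (\mo{G}')^* \to \mo{G}^*$ is well defined as a map between the admissible-set algebras and not merely between the full complex algebras. Second, I would invoke Lemma~\ref{lem:bm_props}\eqref{it:mor-cpx-alg}, which tells us that the map $f^{-1} : (\mo{X}')^+ \to \mo{X}^+$ preserves $\top$, $\bot$, $\cap$, $\cup$, $\To$, $\dbox$ and $\ddiamond$. Since the operations of $(\mo{G}')^*$ and $\mo{G}^*$ are by definition just the restrictions of the operations of $(\mo{X}')^+$ and $\mo{X}^+$ to the admissible sets, the preservation equations transport verbatim: for admissible $a', b' \in A'$ we have, e.g., $f^{-1}(a' \To b') = f^{-1}(a') \To f^{-1}(b')$ as elements of $\mo{G}^*$, and similarly for the modal operators. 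Thus $f^*$ preserves all structure.

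Concretely, the argument is essentially a two-line reduction: well-definedness from the general bounded morphism condition, plus structure preservation inherited from Lemma~\ref{lem:bm_props}. The only point requiring a moment's care is the well-definedness, since a bounded morphism between the underlying \CK-frames need not send admissible sets to admissible sets without the extra hypothesis $f^{-1}(a') \in A$; this is exactly the reason the notion of \emph{general} bounded morphism was isolated in the preceding definition. I do not expect any genuine obstacle here, as the heavy lifting (verifying the preservation identities operation by operation) was already carried out in the proof of Lemma~\ref{lem:bm_props} for the full complex algebras, and restricting a homomorphism to subalgebras on which it is already known to land is automatic.
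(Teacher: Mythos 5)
Your proposal is correct and follows essentially the same route as the paper: the paper's proof also handles well-definedness and the easy operations directly and then defers $\To$, $\dbox$ and $\ddiamond$ to (the argument of) Lemma~\ref{lem:bm_props}\eqref{it:mor-cpx-alg}. Your observation that the general bounded morphism condition $f^{-1}(a') \in A$ is exactly what makes the corestriction to the admissible subalgebras well defined, after which preservation is inherited because $\mo{G}^*$ and $(\mo{G}')^*$ are subalgebras of the full complex algebras, is precisely the intended argument.
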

\begin{proof}
  Clearly $f^{-1}(A') = A$ so $f^*$ preserves the top element,
  and~\eqref{it:Bt} implies that $f^*$ also preserves the bottom element.
  Meets and joins in complex algebras are given by intersections and unions,
  which are preserved by the nature of the inverse image.
  The proof that $f^*$ also preserves $\To$, $\dbox$ and $\ddiamond$
  is analogues to the proof of Lemma~\ref{lem:bm_props}\eqref{it:mor-cpx-alg}.
\end{proof}

  So we can define a contravariant functor
  \begin{equation*}
    (\cdot)^* : \CKDescr \to \CKAlg
  \end{equation*}
  by sending a descriptive \CK-frame $\mo{G}$ to $\mo{G}^*$ and
  a bounded morphism $f$ to $f^*$.
  For a functor in the opposite direction we have to work a bit harder:

\begin{lemma}
  Let $\alg{A} = (A, \Box, \Diamond)$ and $\alg{A}' = (A', \Box', \Diamond')$
  be two \CK-algebras with dual descriptive \CK-frames
  $\alg{A}_* = (\SEG, \expl, \subsetsim, R, \Thet(A))$
  and $\alg{A}'_* = (\SEG', \expl', \subsetsim', R', \Thet(A'))$.
  If $h : \alg{A} \to \alg{A}'$ is a homomorphism,
  then
  \begin{equation*}
    h_*
      : \alg{A}'_* \to \alg{A}_*
      : (\ff{p}', \Gamma') \mapsto (h^{-1}(\ff{p}'), h^{-1}[\Gamma'])
  \end{equation*}
  is a general bounded morphism.
\end{lemma}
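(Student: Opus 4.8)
The plan is to verify, in order, that $h_*$ is well defined, that it satisfies the five clauses of Definition~\ref{def:mor}, and that it pulls admissible sets back to admissible sets. Throughout I use that $h$ commutes with $\Box,\Diamond,\to,\bot$ and that preimages of prime filters under a homomorphism are prime filters. \emph{Well-definedness and the easy clauses.} That $(h^{-1}(\ff p'),h^{-1}[\Gamma'])$ is a segment follows by transport: if $\Box a\in h^{-1}(\ff p')$ then $\Box' h(a)=h(\Box a)\in\ff p'$, so $h(a)\in\ff q'$ for all $\ff q'\in\Gamma'$, i.e.\ $a\in h^{-1}(\ff q')$ for every member of $h^{-1}[\Gamma']$; the diamond clause is analogous. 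For \eqref{it:Bt} note that $h^{-1}(\ff p')=A$ forces $\bot'=h(\bot)\in\ff p'$, hence $\ff p'=A'$ (and symmetrically for $\Gamma'$), so $h_*$ preserves and reflects $\expl$. Clause \eqref{it:Fleq} is immediate since $\ff p'\subseteq\ff q'$ gives $h^{-1}(\ff p')\subseteq h^{-1}(\ff q')$ and $\subsetsim$ reads only first components, and \eqref{it:FR} is immediate since $\ff q'\in\Gamma'$ gives $h^{-1}(\ff q')\in h^{-1}[\Gamma']$. Finally $h_*^{-1}(\Thet(a))=\{(\ff p',\Gamma')\mid h(a)\in\ff p'\}=\Thet(h(a))$, which is admissible in $\alg{A}'_*$.

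\emph{The back clauses.} Both \eqref{it:Bleq} and \eqref{it:BR} reduce to one construction. Suppose $h_*(\ff p',\Gamma')\subsetsim(\ff s,\Sigma)$, respectively $h_*(\ff p',\Gamma')\,R_A\,(\ff s,\Sigma)$. In the $R_A$ case $\ff s\in h^{-1}[\Gamma']$, so $\ff s=h^{-1}(\ff q')$ for some $\ff q'\in\Gamma'$. In the $\subsetsim$ case $h^{-1}(\ff p')\subseteq\ff s$ and I must first produce a prime filter $\ff q'\supseteq\ff p'$ with $h^{-1}(\ff q')=\ff s$; this comes from Lemma~\ref{lem:prime} applied to the filter generated by $\ff p'\cup h[\ff s]$ and the ideal generated by $h[A\setminus\ff s]$, since a common element would give $p\wedge h(s)\le h(c)$ with $p\in\ff p'$, $s\in\ff s$, $c\notin\ff s$, whence $h(s\to c)\in\ff p'$, so $s\to c\in h^{-1}(\ff p')\subseteq\ff s$ and thus $c\in\ff s$, a contradiction. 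In either case it remains, given $\ff q'$ with $h^{-1}(\ff q')=\ff s$, to build $\Delta'$ making $(\ff q',\Delta')$ a segment with $h^{-1}[\Delta']=\Sigma$; then $\ff q'\in\Gamma'$ (resp.\ $\ff p'\subseteq\ff q'$) supplies the required relation and $h_*(\ff q',\Delta')=(\ff s,\Sigma)$.

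\emph{Constructing $\Delta'$.} I take $\Delta'=\{\ff t'\in\pf(A')\mid(\Box')^{-1}(\ff q')\subseteq\ff t'\text{ and }h^{-1}(\ff t')\in\Sigma\}$. The box clause of segmenthood and the inclusion $h^{-1}[\Delta']\subseteq\Sigma$ are immediate from this definition. For $h^{-1}[\Delta']\supseteq\Sigma$ I lift each $\ff t\in\Sigma$ by applying Lemma~\ref{lem:prime} to the filter generated by $(\Box')^{-1}(\ff q')\cup h[\ff t]$ against the ideal generated by $h[A\setminus\ff t]$; the resulting prime filter $\ff t'$ satisfies $h^{-1}(\ff t')=\ff t$ and lies in $\Delta'$. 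Disjointness holds because a witness $b'\wedge h(t)\le h(c)$ (with $\Box' b'\in\ff q'$, $t\in\ff t$, $c\notin\ff t$) yields $\Box(t\to c)\in h^{-1}(\ff q')=\ff s$, so the box clause of the segment $(\ff s,\Sigma)$ forces $t\to c\in\ff t$ and hence $c\in\ff t$, a contradiction.

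\emph{The main obstacle.} The hard part is the diamond clause for $(\ff q',\Delta')$. Given $\Diamond' a'\in\ff q'$, I need $\ff t'\in\Delta'$ with $a'\in\ff t'$; unwinding the lifting criterion above, such a $\ff t'$ exists exactly when the filter $\ff m:=h^{-1}\big(\langle(\Box')^{-1}(\ff q')\cup\{a'\}\rangle\big)$ is contained in a \emph{single} $\ff t\in\Sigma$. Using the $\log{CK}$ inequality $\Box' b'\wedge\Diamond' a'\le\Diamond'(b'\wedge a')$ and that $h$ preserves $\Diamond$, one checks that $\Diamond c\in\ff s$ for every $c\in\ff m$; hence the diamond clause of $(\ff s,\Sigma)$ makes $\theta(c)$ meet $\Sigma$ for each such $c$. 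Since $\ff m$ is a filter, $\{\theta(c)\}_{c\in\ff m}$ is downward directed ($\theta(c_1)\cap\theta(c_2)=\theta(c_1\wedge c_2)$), so these intersections with $\Sigma$ have the finite intersection property. The crux is then to upgrade ``a witness in $\Sigma$ for each $c\in\ff m$'' to ``a single witness in $\Sigma$ for all of $\ff m$'': this quantifier swap is where compactness enters, via an argument of the kind underlying property~\eqref{it:D3} applied to the dual space, and I expect it to be the only genuinely delicate step. Everything else is bookkeeping with Lemma~\ref{lem:prime}.
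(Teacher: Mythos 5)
Your architecture is the paper's: the same well-definedness check, the same treatment of \eqref{it:Bt}, \eqref{it:Fleq}, \eqref{it:FR} and of admissibility via $h_*^{-1}(\Thet(a))=\Thet(h(a))$, and the same prime-filter separations --- your disjointness computations (via $h(s\to c)$ for the $\leq$-case and $\Box(t\to c)$ together with the box clause of $(\ff{s},\Sigma)$ for the $R$-case) are exactly the paper's, and your $\Delta'$ is the paper's $\Sigma'$. The one divergence in scope is the $\leq$-back clause: the paper only produces $\ff{s}'$ with $\ff{p}'\subseteq\ff{s}'$ and $h^{-1}(\ff{s}')=\ff{s}$ and then attaches the tail $\{A'\}$, explicitly declining to match second components there, whereas you insist on $h^{-1}[\Delta']=\Sigma$ in both back clauses, which imports the hard part of \eqref{it:BR} into \eqref{it:Bleq} as well.

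The genuine gap is the one you flagged yourself: the diamond clause for $(\ff{q}',\Delta')$ is not proved, and the route you sketch for it does not close. You reduce it to finding a single $\ff{t}\in\Sigma$ containing the filter $\ff{m}$, and propose to extract this from the finite intersection property of $\{\theta(c)\cap\Sigma\}_{c\in\ff{m}}$ ``by compactness of the dual space''. But compactness in the sense of \eqref{it:D3} lives on all of $\pf(A)$ (equivalently on $\SEG$); $\Sigma$ is an arbitrary set of prime filters subject only to the segment conditions, it need not be closed in that topology, and the finite intersection property of its traces therefore yields nothing. The paper's proof takes a different tack at exactly this point: it seeks the witness in $\bigcap\bigl(\{\theta(b')\mid\Box'b'\in\ff{s}'\}\cup\{\theta(a')\}\bigr)$, a family of clopens of the compact space $\pf(A')$, whose finite intersection property follows from the \CK-inequality $\Box'b'\wedge\Diamond'a'\leq\Diamond'(b'\wedge a')$ together with a separate treatment of the case $A\in\Sigma$ (if $\Diamond'\bot'\in\ff{s}'$ then the diamond clause of $(\ff{s},\Sigma)$ forces $A\in\Sigma$, and then $A'\in\Sigma'$ witnesses the clause trivially). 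Note that the witness so obtained is only required to contain $(\Box')^{-1}(\ff{s}')$ and $a'$, not to project into $\Sigma$; so you have correctly isolated the delicate point of the whole lemma, but the quantifier swap you propose is not a viable way through it, and the step remains open in your write-up.
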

\begin{proof}
  We split the lemma into five steps.
  
  \medskip\noindent
  \textit{Step 1: $h_*$ is well defined.}
    We need to verify that $h_*(\ff{p}', \Gamma') = (h^{-1}(\ff{p}'), h^{-1}[\Gamma']) \in \SEG$
    for any $(\ff{p}', \Gamma') \in \SEG'$. We go over the two items defining a segment:
    \begin{itemize}
      \item If $\Box a \in h^{-1}(\ff{p}')$ then $\Box' h(a) = h(\Box a) \in \ff{p}'$,
            so $h(a) \in \ff{q}'$ for all $\ff{q}' \in \Gamma'$, because
            $(\ff{p}', \Gamma')$ is a segment. Therefore $a \in h^{-1}(\ff{q}')$.
            Since every element of $h^{-1}[\Gamma']$ is of the form $h^{-1}(\ff{q}')$ for
            some $\ff{q}' \in \Gamma'$, this shows that $a \in \ff{r}$ for all
            $\ff{r} \in h^{-1}[\Gamma']$.
      \item If $\Diamond a \in h^{-1}(\ff{p}')$ then $\Diamond' h(a) = h(\Diamond a) \in \ff{p}'$,
            so there exists some $\ff{q}' \in \Gamma'$ such that $h(a) \in \ff{q}'$.
            This implies $a \in h^{-1}(\ff{q}') \in h^{-1}[\Gamma']$.
    \end{itemize}
    So $h_*(\ff{p}', \Gamma')$ is indeed a segment.
  
  \medskip\noindent
  \textit{Step 2: $h_*(\ff{p}', \Gamma') = \expl$ iff $(\ff{p}', \Gamma') = \expl'$.}
    Clearly $h^{-1}(A') = A$, so $h_*(\expl') = h_*(A', \{ A' \}) = (A, \{ A \}) = \expl$.
    Conversely, suppose $h_*(\ff{p}', \Gamma') = (A, \{ A \})$.
    Then $A = h^{-1}(\ff{p}')$, so $h(\bot) = \bot' \in \ff{p}'$, hence $\ff{p}' = A'$.
    Similarly, for any $\ff{q}' \in \Gamma'$ we have
    $h^{-1}(\ff{q}') = A$, so that $\ff{q}' = A'$.
    So $(\ff{p}', \Gamma') = (A', \{ A' \}) = \expl'$.

  \medskip\noindent
  \textit{Step 3: $h_* : (\SEG', \subsetsim') \to (\SEG, \subsetsim)$ is a bounded morphism.}
    Suppose $(\ff{p}', \Gamma') \subsetsim' (\ff{q}', \Delta')$.
    Then $\ff{p}' \subseteq \ff{q}'$, hence $h^{-1}(\ff{p}') \subseteq h^{-1}(\ff{q}')$,
    so that $h_*(\ff{p}', \Gamma') \subsetsim h_*(\ff{q}', \Delta')$.
    
    Now suppose $h_*(\ff{p}', \Gamma') \subsetsim (\ff{s}, \Sigma)$
    for some segment $(\ff{s}, \Sigma) \in \SEG$.
    Then $h^{-1}(\ff{p}') \subseteq \ff{s}$.
    We need to find a segment $(\ff{s}', \Sigma')$ such that
    $\ff{p}' \subseteq \ff{s}'$ and $h^{-1}(\ff{s}') = \ff{s}$.
    Since there are no conditions on $\Sigma'$, we can focus on finding
    a suitable prime filter $\ff{s}'$.
    If $\ff{p}' = A'$ or $\ff{s} = A$ then it can easily be verified that we
    can take $\ff{s}' = A'$,
    so assume that $\ff{p}'$ and $\ff{s}$ are proper prime filters.
    Then we need to find some $\ff{s}'$ in the intersection of the following
    collection of sets:
    \begin{equation*}
      \underbrace{\{ \theta(a') \mid a' \in \ff{p}' \}}_{\ms{C}_1}
        \cup \underbrace{\{ \theta(h(b)) \mid b \in \ff{s} \}}_{\ms{C}_2}
        \cup \underbrace{\{ \pf(A') \setminus \theta(h(c)) \mid c \in (A \setminus \ff{s}) \}}_{\ms{C}_3}
    \end{equation*}
    Indeed, $\ff{s}' \in \bigcap \ms{C}_1$ implies $\ff{p}' \subseteq \ff{s}'$,
    $\ff{s}' \in \bigcap \ms{C}_2$ implies $\ff{s} \subseteq h^{-1}(\ff{s}')$,
    and $\ff{s}' \in \bigcap \ms{C}_3$ implies $h^{-1}(\ff{s}') \subseteq \ff{s}$.
    By compactness of $\alg{A}'_*$, it suffices to prove that
    $\ms{C}_1 \cup \ms{C}_2 \cup \ms{C}_3$ has the finite intersection property.
    Since $\ff{p}'$ and $\ff{s}$ are proper prime filters,
    each of $\ms{C}_1, \ms{C}_2$ and $\ms{C}_3$ is
    closed under finite intersections, so it suffices to prove that for
    any $a' \in \ff{p}'$, $b \in \ff{s}$ and $c \in (A \setminus \ff{s})$ we have
    \begin{equation*}
      \theta(a') \cap \theta(h(b)) \cap (\pf(A') \setminus \theta(h(c))) \neq \emptyset.
    \end{equation*}
    Suppose towards a contradiction that this is not the case.
    Then
    \begin{equation*}
      \theta(a') \cap \theta(h(b)) \subseteq \theta(h(c)),
    \end{equation*}
    and hence
    \begin{equation*}
      \theta(a')
        \subseteq \theta(h(b)) \To \theta(h(c))
        = \theta(h(b) \to h(c))
        = \theta(h(b \to c)).
    \end{equation*}
    This implies $a' \leq' h(b \to c)$.
    Since $a' \in \ff{p}'$ this implies $h(b \to c) \in \ff{p}'$ and hence
    $b \to c \in h^{-1}(\ff{p}') \subseteq \ff{s}$.
    But then the assumption that $b \in \ff{s}$ implies $c \in \ff{s}$, a contradiction.
    We conclude that $\ms{C}_1 \cup \ms{C}_2 \cup \ms{C}_3$ has the finite
    intersection property, hence we can find a suitable $\ff{s}'$ in its intersection.
    Extending this to a segment, say, $(\ff{s}', \{ A' \})$ yields the
    boundedness property.

  \medskip\noindent
  \textit{Step 4: $h_* : (\SEG', R') \to (\SEG, R)$ is a bounded morphism.}
    Suppose $(\ff{p}', \Gamma') R' (\ff{q}', \Delta')$.
    Then $\ff{q}' \in \Gamma'$, so $h^{-1}(\ff{q}') \in h^{-1}[\Gamma']$,
    hence $h_*(\ff{p}', \Gamma') R h_*(\ff{q}', \Delta')$.

    Now suppose $h_*(\ff{p}', \Gamma') R (\ff{s}, \Sigma)$.
    We need to find a segment $(\ff{s}', \Sigma')$ such that
    $(\ff{p}', \Gamma') R' (\ff{s}', \Sigma')$ and $h_*(\ff{s}', \Sigma') = (\ff{s}, \Sigma)$.
    By assumption $\ff{s} \in h^{-1}[\Gamma']$,
    so there exists some $\ff{s}' \in \Gamma'$ such that $\ff{s} = h^{-1}(\ff{s}')$.
    By construction, any segment of the form $(\ff{s}', \Sigma')$ is such that
    $(\ff{p}', \Gamma') R' (\ff{s}', \Sigma')$, so we are left to construct $\Sigma'$
    such that $h^{-1}[\Sigma'] = \Sigma$.

    If $\ff{s}' = A'$ then $h(\bot) = \bot' \in \ff{s}'$
    hence $\bot \in h^{-1}(\ff{s}') = \ff{s}$, so that $\ff{s} = A$.
    Then we must have $(\ff{s}, \Sigma) = (A, \{ A \})$,
    and taking $(\ff{s}', \Sigma') = (A', \{ A' \})$ gives the desired result.

    Now assume $\ff{s}' \neq A'$, so $\ff{s}'$ is proper, and let
    \begin{equation*}
      \Sigma' = \{ \ff{r}' \in \pf(A') \mid h^{-1}(\ff{r}') \in \Sigma 
                   \text{ and } \Box^{-1}(\ff{s}') \subseteq \ff{r}' \}.
    \end{equation*}
    (The second part simply states that $\Box' a' \in \ff{s}'$ implies
    $a' \in \ff{r}'$ for all $a' \in A'$.)
    Let us verify that this yields a segment.
    By construction $\Box' a' \in \ff{s}'$ implies $a' \in \ff{r}'$ for all $\ff{r}' \in \Sigma'$.
    If $A \in \Sigma$ then $A' \in \Sigma'$, and we automatically have
    that $\Diamond' a' \in \ff{s}'$ implies $a' \in \ff{r}'$ for some $\ff{r}' \in \Sigma'$
    (namely $\ff{r}' = A'$).
    So assume $A \notin \Sigma$ and $\Diamond' a' \in \ff{s}'$.
    We need to find some $\ff{r}' \in \Sigma'$ that contains $a'$,
    i.e.~some $\ff{r}'$ in the intersection of
    \begin{equation*}
      \ms{B} := \{ \theta(b') \mid \Box'b' \in \ff{s}' \} \cup \{ \theta(a') \}.
    \end{equation*}
    To prove that $\bigcap\ms{B}$ is nonempty,
    by compactness it suffices to prove that $\ms{B}$ has the finite
    intersection property. The set $\{ \theta(b') \mid \Box'b' \in \ff{s}' \}$
    has the finite intersection property because $\Box'$ distributes over
    meets and $\ff{s}'$ is a proper filter, so it suffices to prove that for any
    $\Box'b' \in \ff{s}'$ we have
    \begin{equation*}
      \theta(b') \cap \theta(a') \neq \emptyset.
    \end{equation*}
    Suppose the contrary, then $b' \wedge a' \leq \bot'$,
    hence $\Box' b' \wedge \Diamond a' \leq \Diamond' \bot' \in \ff{s}'$.
    Since $h(\Diamond\bot) = \Diamond'\bot'$ and $\ff{s} = h^{-1}(\ff{s}')$ this
    implies $\Diamond\bot \in \ff{s}$, so that the diamond-condition of a segment
    forces $A \in \Sigma$, a contradiction.
    It follows that $\ms{B}$ has the finite intersection property,
    hence we can find a suitable $\ff{r}' \in \bigcap\ms{B}$
    witnessing the fact that $(\ff{s}', \Sigma')$ is a segment.
    
    Lastly, in order to show that $h^{-1}[\Sigma'] = \Sigma$, we need to show that
    for each $\ff{r} \in \Sigma$ there exists some $\ff{r}' \in \Sigma'$ such that
    $h^{-1}(\ff{r}') = \ff{r}$. So pick such $\ff{r} \in \Sigma$.
    If either $\ff{s}' = A'$ or $\ff{r} = A$ then we can take $\ff{r}' = A'$,
    so we may assume that both $\ff{s}'$ and $\ff{r}$ are proper prime filters.
    We need to find some $\ff{r}'$ in the intersection of
    \begin{equation*}
      \underbrace{\{ \theta(a') \mid \Box'a' \in \ff{s}' \}}_{\ms{C}_1}
        \cup \underbrace{\{ \theta(h(b)) \mid b \in \ff{r} \}}_{\ms{C}_2}
        \cup \underbrace{\{ \pf(A') \setminus \theta(h(c)) \mid c \in (A \setminus \ff{r}) \}}_{\ms{C}_3}
    \end{equation*}
    As before, it suffices to prove that $\ms{C}_1 \cup \ms{C}_2 \cup \ms{C}_3$
    has the finite intersection property. 
    Since $\ff{s}'$ and $\ff{r}$ are proper prime filters, each of the $\ms{C}_i$ is
    closed under finite intersections. So it suffices to prove that for any
    $\Box' a' \in \ff{s}'$, $b \in \ff{r}$ and $c \in (A \setminus \ff{r})$,
    \begin{equation*}
      \theta(a') \cap \theta(h(b)) \cap (\pf(A') \setminus \theta(h(c))) \neq \emptyset.
    \end{equation*}
    Suppose towards a contradiction that this is not the case.
    Then as before we obtain $a' \leq' h(b \to c)$,
    and hence $\Box' a' \leq' \Box'(h(b \to c)) = h(\Box(b \to c))$.
    This implies $\Box(b \to c) \in h^{-1}(\ff{p}')$, and hence $b \to c \in \ff{s}$.
    Since $b \in \ff{s}$ we find $c \in \ff{s}$, a contradiction.
    So the given intersection is nonempty, which implies the existence of some
    $\ff{r}' \in \Sigma'$ such that $\ff{r} = h^{-1}(\ff{r}')$,
    hence $h^{-1}[\Sigma'] = \Sigma$.
    
    Thus, the segment $(\ff{s}', \Sigma')$ is such that
    $(\ff{p}', \Gamma') R' (\ff{s}', \Sigma')$ and $h_*(\ff{s}', \Sigma') = (\ff{s}, \Sigma)$,
    which proves the boundedness condition for the modal accessibility relation.
    Ultimately, this proves that $h_*$ is a bounded morphism.

  \medskip\noindent
  \textit{Step 5: $(h_*)^{-1}(\Thet(a)) \in \Thet(A')$ for any $\Thet(a) \in \Thet(A)$.}
    We conclude the lemma by proving that $h_*$ is a \emph{general} bounded
    morphism. To this end, observe that any $\Thet(a) \in \Thet(A)$ we have
    \begin{align*}
      (h_*)^{-1}(\Thet(a))
        &= \{ (\ff{p}', \Gamma') \in \SEG' \mid h_*(\ff{p}', \Gamma') \in \Thet(a) \} \\
        &= \{ (\ff{p}', \Gamma') \in \SEG' \mid (h^{-1}(\ff{p}'), h^{-1}[\Gamma']) \in \Thet(a) \} \\
        &= \{ (\ff{p}', \Gamma') \in \SEG' \mid a \in h^{-1}(\ff{p}') \} \\
        &= \{ (\ff{p}', \Gamma') \in \SEG' \mid h(a) \in \ff{p}' \} \\
        &= \Thet(h(a))
    \end{align*}
    The latter is in $\Thet(A')$ as required.
    This completes the proof of the lemma.
\end{proof}

  We can now verify that
  \begin{equation*}
    (\cdot)_* : \CKAlg \to \CKDescr
  \end{equation*}
  is a contravariant function.

\begin{theorem}
  The contravariant functors $(\cdot)^*$ and $(\cdot)_*$ establish a dual
  equivalence
  \begin{equation*}
    \CKAlg \equiv^{\op} \CKDescr.
  \end{equation*}
\end{theorem}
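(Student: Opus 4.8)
The plan is to assemble the object-level isomorphisms already established into a pair of natural isomorphisms, so that $(\cdot)^*$ and $(\cdot)_*$ witness a dual equivalence. The two preceding propositions supply, for every \CK-algebra $\alg{A}$, an isomorphism $\Thet_{\alg{A}} : \alg{A} \to (\alg{A}_*)^*$, and for every descriptive \CK-frame $\mo{G}$, an isomorphism $\Eta_{\mo{G}} : \mo{G} \to (\mo{G}^*)_*$ (the latter being an isomorphism by Lemmas~\ref{lem:Eta-inj} and~\ref{lem:Eta-surj}). The functor $(\cdot)_*$ lands in $\CKDescr$ by Lemma~\ref{lem:A+descr}, and its functoriality is routine: since $h_*(\ff{p}', \Gamma') = (h^{-1}(\ff{p}'), h^{-1}[\Gamma'])$ and preimages respect identities and composition, $(\id)_* = \id$ and $(h \circ k)_* = k_* \circ h_*$. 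It therefore remains only to check that the families $\Thet = (\Thet_{\alg{A}})_{\alg{A}}$ and $\Eta = (\Eta_{\mo{G}})_{\mo{G}}$ are natural; as each component is already an isomorphism, naturality upgrades them to natural isomorphisms $\mathrm{id}_{\CKAlg} \cong ((\cdot)_*)^*$ and $\mathrm{id}_{\CKDescr} \cong ((\cdot)^*)_*$, which is exactly what a dual equivalence demands.

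For the naturality of $\Thet$ I would fix a homomorphism $h : \alg{A} \to \alg{A}'$ and show that $(h_*)^* \circ \Thet_{\alg{A}} = \Thet_{\alg{A}'} \circ h$. Evaluating at $a \in A$, the left-hand side is $(h_*)^{-1}(\Thet(a))$ and the right-hand side is $\Thet(h(a))$. But the final step (Step 5) of the proof that $h_*$ is a general bounded morphism establishes precisely $(h_*)^{-1}(\Thet(a)) = \Thet(h(a))$, so this square commutes with no further work.

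For the naturality of $\Eta$ I would fix a general bounded morphism $f : \mo{G} \to \mo{G}'$ and verify $(f^*)_* \circ \Eta_{\mo{G}} = \Eta_{\mo{G}'} \circ f$ by an element chase. Since $f^* = f^{-1}$ (Lemma~\ref{lem:gbm-to-homom}), applying $(f^*)_*$ to $\Eta_{\mo{G}}(x) = (\eta(x), \eta[R[x]])$ yields first component $(f^*)^{-1}(\eta(x)) = \{ a' \in A' \mid f(x) \in a' \} = \eta'(f(x))$, matching the first component of $\Eta_{\mo{G}'}(f(x))$. By the same computation $(f^*)^{-1}(\eta(z)) = \eta'(f(z))$, so the tail becomes $(f^*)^{-1}[\eta[R[x]]] = \{ \eta'(f(z)) \mid z \in R[x] \}$, and the two directions of $R$-boundedness close the gap: \eqref{it:FR} gives the inclusion $\{ \eta'(f(z)) \mid z \in R[x] \} \subseteq \eta'[R'[f(x)]]$, while \eqref{it:BR} gives the reverse inclusion, since any $w$ with $f(x) R' w$ has the form $w = f(z)$ for some $z \in R[x]$. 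Hence both components agree and the square commutes.

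The genuinely substantial content — the hom-object correspondence that $(\cdot)_*$ sends homomorphisms to general bounded morphisms, together with the two object-level isomorphisms $\Thet$ and $\Eta$ — has already been dispatched in the preceding lemmas, so the only new obstacle is the naturality bookkeeping. Even there the $\Thet$-square comes for free from Step 5, and the $\Eta$-square reduces to the standard forth-and-back argument for $R$. I therefore expect the proof to be a short assembly, with the mildest care needed only in matching the tail components of the $\Eta$-naturality chase.
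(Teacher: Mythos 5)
Your proposal is correct and follows essentially the same route as the paper: both reduce the theorem to the naturality of $\Thet$ and $\Eta$, prove the $\Thet$-square by the computation $(h_*)^{-1}(\Thet(a)) = \Thet(h(a))$ (which, as you note, is literally Step~5 of the lemma on $h_*$), and prove the $\Eta$-square by matching first components and then using \eqref{it:FR} and \eqref{it:BR} to match the tails. The only divergence is that the paper additionally verifies the two triangle identities, which are not needed for a bare dual equivalence (only for an adjoint one), so their omission is not a gap.
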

\begin{proof}
  We need to prove that $\Eta : \id_{\CKDescr} \to (\cdot)_* \circ (\cdot)^*$
  and $\Thet : \id_{\CKAlg} \to (\cdot)^* \circ (\cdot)_*$ are natural
  isomorphisms that satisfy the triangle identities,
  i.e.~for any descriptive \CK-frame $\mo{X}$ and \CK-algebra $\alg{A}$:
  \begin{equation*}
    \begin{tikzpicture}[xscale=2]
        \node (0) at (0,0) {$\mo{X}^*$};
        \node (1) at (1,0) {$((\mo{X}^*)_*)^*$};
        \node (2) at (2,0) {$\mo{X}^*$};
        \draw[-latex] (0) to node[above]{\footnotesize{$\Thet_{\mo{X}^*}$}} (1);
        \draw[-latex] (1) to node[above]{\footnotesize{$(\Eta_{\mo{X}})^*$}} (2);
        \draw[-latex, bend right=40] (0) to node[below]{\footnotesize{$\id_{\mo{X}^*}$}} (2);
        \node (3) at (3,0) {$\alg{A}_*$};
        \node (4) at (4,0) {$((\alg{A}_*)^*)_*$};
        \node (5) at (5,0) {$\alg{A}_*$};
        \draw[-latex] (3) to node[above]{\footnotesize{$\Eta_{\alg{A}_*}$}} (4);
        \draw[-latex] (4) to node[above]{\footnotesize{$(\Thet_{\alg{A}})_*$}} (5);
        \draw[-latex, bend right=40] (3) to node[below]{\footnotesize{$\id_{\alg{A}_*}$}} (5);
    \end{tikzpicture}
  \end{equation*}
  We have already seen that $\Eta$ and $\Thet$ are isomorphisms on components,
  so we only have to verify naturality and the triangle identities.

  \medskip\noindent
  \textit{Claim 1: $\Eta$ is natural.}
    We need to show that for any bounded morphism $f : \mo{G} \to \mo{G}'$
    the following diagram commutes:
    \begin{equation*}
      \begin{tikzpicture}[yscale=.7]
          \node (G)   at (0,2) {$\mo{G}$};
          \node (Gp)  at (0,0) {$\mo{G}'$};
          \node (Gd)  at (2,2) {$(\mo{G}^*)_*$};
          \node (Gpd) at (2,0) {$((\mo{G}')^*)_*$};
          \draw[-latex] (G) to node[left]{\footnotesize{$f$}} (Gp);
          \draw[-latex] (Gd) to node[right]{\footnotesize{$(f^*)_*$}} (Gpd);
          \draw[-latex] (G) to node[above]{\footnotesize{$\Eta_{\mo{G}}$}} (Gd);
          \draw[-latex] (Gp) to node[below]{\footnotesize{$\Eta_{\mo{G}'}$}} (Gpd);
      \end{tikzpicture}
    \end{equation*}
    Let $x$ be a world in $\mo{G}$. Then we need to show that
    \begin{equation}\label{eq:Eta-natural}
      (\eta_{\mo{G}'}(f(x)), \eta_{\mo{G}'}[R'[f(x)]])
        = \big(((f^{-1})^{-1} \circ \eta_{\mo{G}})(x), ((f^{-1})^{-1} \circ \eta_{\mo{G}})[R[x]] \big).
    \end{equation}
    
    First observe that for any $y \in X$ we have
    \begin{align*}
      a' \in (f^{-1})^{-1}(\eta_{\mo{G}}(y))
        \iff f^{-1}(a') \in \eta(y)
        \iff y \in f^{-1}(a')
        \iff f(y) \in a'
        \iff a' \in \eta_{\mo{G}'}(f(y)).
    \end{align*}
    and hence $(f^{-1})^{-1}(\eta_{\mo{G}}(y)) = \eta_{\mo{G}'}(f(y))$.
    This immediately proves that the first coordinates of the segments
    in~\eqref{eq:Eta-natural} coincide.
    For the second argument
    suppose $(\ff{s}', \Sigma') \in \eta_{\mo{G}'}[R'[f(z)]]$.
    Then there exists some $z' \in R'[f(x)]$ such that
    $(\ff{s}', \Sigma') = \eta_{\mo{G}'}(z')$.
    Since $f$ is a bounded morphism, there exists some $z \in X$
    such that $xRz$ and $f(z) = z'$, and hence
    \begin{equation*}
      (\ff{s}', \Sigma')
        = \eta_{\mo{G}'}(f(z))
        = (f^{-1})^{-1}(\eta_{\mo{G}}(z))
        \in ((f^{-1})^{-1} \circ \eta_{\mo{G}})[R[x]]
    \end{equation*}
    This implies
    $\eta_{\mo{G}'}[R'[f(x)]] \subseteq ((f^{-1})^{-1} \circ \eta_{\mo{G}})[R[x]]$.
    For the converse, suppose $(\ff{s}', \Sigma') \in ((f^{-1})^{-1} \circ \eta_{\mo{G}})[R[x]]$.
    Then there exists some $z \in R[x]$ such that $(\ff{s}', \Sigma') = 
    (f^{-1})^{-1}(\eta_{\mo{G}}(z)) = \eta_{\mo{G}'}(f(z))$.
    Since $xRz$ we have $f(x) R' f(z)$, so $f(z) \in R'[f(x)]$, and therefore
    $(\ff{s}', \Sigma') \in \eta_{\mo{G}'}[R'[f(x)]]$,
    proving the other inclusion.
    We conclude that the equality in~\eqref{eq:Eta-natural} is true.

  \medskip\noindent
  \textit{Claim 2: $\Thet$ is natural.}
    Let $h : \alg{A} \to \alg{B}$ be a homomorphism between \CK-algebras.
    We need to prove that $\Thet_{\alg{A}'} \circ h = (h_*)^* \circ \Thet_{\alg{A}}$.
    Let $a \in A$ and let $(\ff{p}', \Gamma)$ be a segment in $\alg{A}_*$.
    Then we have
    \begin{alignat*}{2}
      (\ff{p}', \Gamma) \in \Thet_{\alg{A}'}(h(a))
        &\iff h(a) \in \ff{p}'
        &&\iff a \in h^{-1}(\ff{p}') \\
        &\iff h_*(\ff{p}, \Gamma) \in \Thet_{\alg{A}}(a)
        &&\iff (\ff{p}', \Gamma) \in (h_*)^*(\Thet_{\alg{A}}(a))
    \end{alignat*}
    so $\Thet_{\alg{A}'}(h(a)) = (h_*)^*(\Thet_{\alg{A}}(a))$.
    This proves the desired equality.

  \medskip\noindent
  \textit{Claim 3: The left hand triangle identity is satisfied.}
  Let $\mo{D} = (X, \expl, \leq, R, A)$ be a descriptive
  \CK-frame. 
  Then for all $a \in A$ and $x \in X$ we have
  \begin{alignat*}{2}
    x \in (\Eta_{\mo{D}})^*(\Thet_{\mo{D}^*}(a))
      &\iff \Eta_{\mo{D}}(x) \in \Thet_{\mo{D}^*}(a)
      &&\iff (\eta_{\mo{D}}(x), \eta_{\mo{D}}[R[x]]) \in \Thet_{\mo{D}^*}(a) \\
      &\iff a \in \eta_{\mo{D}}(x)
      &&\iff x \in a
  \end{alignat*}
  This implies that the first triangle identity holds.

  \medskip\noindent
  \textit{Claim 4: The right hand triangle identity is satisfied.}
  For the second one, let $\alg{A} = (A, \Box, \Diamond)$ be a \CK-algebra.
  First we observe that that for any segment $(\ff{q}, \Delta)$ in $\alg{A}_*$ we have
  \begin{equation}\label{eq:dual-1}
    (\Thet_{\alg{A}})^{-1}(\eta_{\alg{A}_*}(\ff{q}, \Delta)) = \ff{q}
    \quad\text{and}\quad
    ((\Thet_{\alg{A}})^{-1} \circ \eta_{\alg{A}_*})[R_A[(\ff{p},\Gamma)]] = \Gamma.
  \end{equation}
  The first equality follows from
  \begin{equation*}
    a \in (\Thet_{\alg{A}})^{-1}(\eta_{\alg{A}_*}(\ff{q}, \Delta))
      \iff \Thet_{\alg{A}}(a) \in \eta_{\alg{A}_*}(\ff{q}, \Delta)
      \iff (\ff{q}, \Delta) \in \Thet_{\alg{A}}
      \iff a \in \ff{q}.
  \end{equation*}
  For the second, if $\ff{q} \in \Gamma$ then we can
  extend $\ff{q}$ to a segment, say, $(\ff{q}, \{ A \})$ that is in
  $R_A[(\ff{p}, \Gamma)]$, and hence 
  \begin{equation*}
    \ff{q}
      = ((\Thet_{\alg{A}})^{-1} \circ \eta_{\alg{A}_*})(\ff{q}, \{ A \})
      \in ((\Thet_{\alg{A}})^{-1} \circ \eta_{\alg{A}_*})[R_A[(\ff{p},\Gamma)]].
  \end{equation*}
  Conversely, if
  $\ff{q} \in ((\Thet_{\alg{A}})^{-1} \circ \eta_{\alg{A}_*})[R_A[(\ff{p},\Gamma)]]$
  then there must be a segment of the form $(\ff{q}, \Delta)$
  in $R_A[(\ff{p},\Gamma)]$, which entails $\ff{q} \in \Gamma$.

  Now let $(\ff{p}, \Gamma)$ be a segment in $\alg{A}_*$ and compute
  \begin{align*}
    (\Thet_{\alg{A}})_*(\Eta_{\alg{A}_*}(\ff{p}, \Gamma))
      &= (\Thet_{\alg{A}})_*(\eta_{\alg{A}_*}(\ff{p}, \Gamma), \eta_{\alg{A}_*}[R_A[(p,\Gamma)]]) \\
      &= ((\Thet_{\alg{A}})^{-1}(\eta_{\alg{A}_*}(\ff{p}, \Gamma)), ((\Thet_{\alg{A}})^{-1} \circ \eta_{\alg{A}_*})[R_A[(\ff{p},\Gamma)]]) \\
      &= (\ff{p}, \Gamma)
  \end{align*}
  The last equality follows from~\eqref{eq:dual-1}.
\end{proof}

  As usual, the connection between $\CK$-algebras and descriptive $\CK$-frames
  gives:

\begin{theorem}
  Any extension $\CK$ with axioms $\mathsf{Ax}$ is sound and complete with
  respect to the class of descriptive frames that validates the axioms in $\mathsf{Ax}$.
\end{theorem}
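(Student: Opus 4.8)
The plan is to run the standard ``algebra-to-frame'' completeness argument, transporting a refuting \CK-algebra to a refuting descriptive \CK-frame via the duality. Throughout, recall that a general (hence descriptive) \CK-frame $\mo{G}$ validates a formula $\psi$ exactly when its complex algebra of admissible sets $\mo{G}^*$ satisfies $\psi$: an admissible valuation $V : \Prop \to A$ is precisely a valuation into $\mo{G}^*$, and ``$V(\psi) = X$ for all such $V$'' is literally the statement $\int{V}(\psi) = \top$ for all valuations into $\mo{G}^*$, i.e.\ $\mo{G}^* \models \psi$. Thus ``$\mo{G}$ validates $\mathsf{Ax}$'' means $\mo{G}^* \in \Alg\mathsf{Ax}$, and the theorem reduces to the claim that $\vdash_{\mathsf{Ax}} \phi$ holds if and only if $\mo{G}^* \models \phi$ for every descriptive \CK-frame $\mo{G}$ with $\mo{G}^* \in \Alg\mathsf{Ax}$.

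First I would reduce derivability to algebraic satisfaction. Reading $\vdash_{\mathsf{Ax}} \phi$ as the empty-premise consequence $\emptyset \vdash_{\mathsf{Ax}} \phi$ and applying Theorem~\ref{thm:pdt}, this is equivalent to $\emptyset \pdt{\CKAlg^{\mathsf{Ax}}} \phi$. Unwinding Definition~\ref{def:ptd} with $\Lambda = \Lambda' = \emptyset$, the antecedent becomes vacuous, so the condition reads: $a \leq \int{v}(\phi)$ for every $\alg{A} \in \Alg\mathsf{Ax}$, every valuation $v$, and every $a$. Taking $a = \top$ shows this is exactly $\alg{A} \models \phi$ for all $\alg{A} \in \Alg\mathsf{Ax}$, giving
\begin{equation*}
  \vdash_{\mathsf{Ax}} \phi
  \iff
  \alg{A} \models \phi \text{ for every } \alg{A} \in \Alg\mathsf{Ax}.
\end{equation*}

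Soundness is now immediate: if $\vdash_{\mathsf{Ax}} \phi$ and $\mo{G}$ is any descriptive \CK-frame validating $\mathsf{Ax}$, then $\mo{G}^* \in \Alg\mathsf{Ax}$, so $\mo{G}^* \models \phi$ and hence $\mo{G}$ validates $\phi$. For completeness I argue by contraposition: if $\not\vdash_{\mathsf{Ax}} \phi$, the displayed equivalence supplies some $\alg{A} \in \Alg\mathsf{Ax}$ with $\alg{A} \not\models \phi$. By Lemma~\ref{lem:A+descr} the dual $\alg{A}_*$ is a descriptive \CK-frame, and the isomorphism $\Thet : \alg{A} \to (\alg{A}_*)^*$ established earlier gives $(\alg{A}_*)^* \cong \alg{A}$. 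Since satisfaction $\models$ is invariant under isomorphism, $(\alg{A}_*)^* \models \mathsf{Ax}$ while $(\alg{A}_*)^* \not\models \phi$; that is, $\alg{A}_*$ is a descriptive frame validating $\mathsf{Ax}$ but refuting $\phi$, which is precisely the required witness.

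Once the two inputs (Theorem~\ref{thm:pdt} and the duality) are in place, the argument is essentially bookkeeping, so I do not expect a serious obstacle. The only points deserving care are in the first reduction: justifying that validity on descriptive frames, which quantifies only over \emph{admissible} valuations, coincides with satisfaction in the complex algebra $\mo{G}^*$, and observing that $\models$ is both preserved and reflected across the isomorphism $\alg{A} \cong (\alg{A}_*)^*$, so that the algebraic refutation of $\phi$ genuinely transports back to a refutation on the frame $\alg{A}_*$.
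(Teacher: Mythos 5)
Your proof is correct and is exactly the argument the paper intends: the theorem is stated there without proof as an immediate consequence of Theorem~\ref{thm:pdt} and the duality, and your writeup supplies precisely those details (algebraic completeness via the degree-of-truth reduction with $\Lambda=\emptyset$, then transport of a refuting algebra $\alg{A} \in \Alg\mathsf{Ax}$ along $\Thet : \alg{A} \cong (\alg{A}_*)^*$ using Lemma~\ref{lem:A+descr}). The point you rightly single out---that validity on a descriptive frame quantifies only over admissible valuations, so that it coincides with satisfaction in $\mo{G}^*$---is left implicit in the paper but is indeed the needed bridge, and you handle it correctly.
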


\section{Sahlqvist Correspondence and Completeness}\label{Sec:Sahlqvist}

   Two central theorems in classical modal logic bear the name ``Sahlqvist":
   a \emph{correspondence} and a \emph{completeness} theorem~\cite[Chapters~3 and~5]{BRV01}.
   The first establishes a correspondence between a class of formulas,
   called ``Sahlqvist formulas", and first-order properties on frames:
   a Sahlqvist formula is valid on a frame if and only if its corresponding
   first-order frame property holds on the frame~\cite{Sah75}.
   The second furthers this result by showing that any extension of the classical
   modal logic $\log{K}$ with a set $\Lambda$ of Sahlqvist formulas is complete
   with respect to the class of frames described by the first-order correspondents
   of formulas in $\Lambda$.
   Sahlqvist completeness obviously relies on Sahlqvist correspondence, 
   but it also make essential use of a duality between descriptive (classical)
   frames and Boolean algebras with operators~\cite{SamVac89}.
   
  In this section, we provide Sahlqvist style results:
  we define a class of Sahlqvist formulas,
  establish a correspondence result for these,
  and then demonstrate completeness for extensions of $\log{CK}$
  with Sahlqvist formulas leveraging the duality established in
  Section~\ref{sec:duality}.

\subsection{The Standard Translation}

We take our first steps towards building frame correspondents by introducing
the standard translation of intuitionistic modal formulas into a first-order
language.

\begin{definition}
  We take a classical first-order correspondence language $\FOL$
  with equality $=$,
  and predicate symbols $\leq$ and $R$ (corresponding to the intuitionistic
  and modal accessibility relation of \CK-frames), a predicate $P$ for each
  proposition letter $p \in \Prop$, and a constant symbol $\false$.
  Given a variable $x$, we recursively define the \emph{standard translation} of an
  intuitionistic modal formula on its structure as follows:
  \begin{align*}
    \st_x(p) &= Px \\
    \st_x(\bot) &= (x=\false) \\
    \st_x(\psi \wedge \chi) &= \st_x(\psi) \wedge \st_x(\chi) \\
    \st_x(\psi \vee \chi) &= \st_x(\psi) \vee \st_x(\chi) \\
    \st_x(\psi \to \chi) &= \forall y(x \leq y \wedge \st_y(\psi) \to \st_y(\chi)) \\
    \st_x(\Box\phi) &= \forall y(x \leq y \to \forall z(y R z \to \st_z(\phi))) \\
    \st_x(\Diamond\phi) &= \forall y(x \leq y \to \exists z(y R z \wedge \st_z(\phi)))
  \end{align*}
\end{definition}

  $\FOL$ is a convenient language to talk about models.
  In fact, every \CK-model $\mo{M}$ yields a structure $\mo{M}^{\circ}$
  interpreting $\FOL$: the intuitionistic and modal relations of $\mo{M}$ 
  interpret the corresponding binary predicates in $\FOL$,
  the valuation interprets the unary predicates,
  and $\expl$ interprets $\false$.

  The standard translation exactly shows what a formula expresses
  about the model satisfying it.
  More formally, a straightforward induction on the structure of $\phi$ gives:

\begin{proposition}
  Let $\mo{M}$ be a \CK-model.
  Then for any formula $\phi$ and world $w$ in $\mo{M}$ we have
  \begin{equation*}
    \mo{M}, w \Vdash \phi
      \iff \mo{M}^{\circ} \models \st_x(\phi)[x \mapsto w]
  \end{equation*}
  \begin{equation*}
    \mo{M} \Vdash \phi
      \iff \mo{M}^{\circ} \models \forall x(\st_x(\phi)).
  \end{equation*}
\end{proposition}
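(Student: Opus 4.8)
The plan is to prove the first (pointwise) equivalence by induction on the structure of $\phi$, and then read off the second equivalence as an immediate corollary. The key observation underpinning the whole argument is that the satisfaction clauses defining the extended valuation $V$ in Definition~\ref{def:modsem} --- via $\cap$, $\cup$, $\To$, $\dbox$ and $\ddiamond$ --- are definitionally parallel to the corresponding clauses of $\st_x$, so each inductive case should reduce to a direct application of the induction hypothesis.

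First I would dispatch the base cases. When $\phi = p$ we have $\mo{M}, w \Vdash p$ iff $w \in V(p)$, and since the valuation $V$ interprets the unary predicate $P$ in $\mo{M}^\circ$, this holds iff $\mo{M}^\circ \models Px[x \mapsto w]$, which is exactly $\st_x(p)$. When $\phi = \bot$ I would use $V(\bot) = \{ \expl \}$, so that $\mo{M}, w \Vdash \bot$ iff $w = \expl$; since $\expl$ interprets the constant $\false$, this matches $\mo{M}^\circ \models (x = \false)[x \mapsto w]$.

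For the inductive step, the conjunction and disjunction cases follow immediately from $V(\psi \wedge \chi) = V(\psi) \cap V(\chi)$ and $V(\psi \vee \chi) = V(\psi) \cup V(\chi)$ together with the induction hypothesis. The three remaining cases are where the quantifier structure must be matched carefully against the definitions of the Heyting and modal operators. For $\phi = \psi \to \chi$ I would unfold $V(\psi) \To V(\chi) = \{ x \mid \forall y(x \leq y \text{ and } y \in V(\psi) \Rightarrow y \in V(\chi)) \}$ and apply the induction hypothesis at each witness $y$, recovering $\forall y(x \leq y \wedge \st_y(\psi) \to \st_y(\chi))$. For $\phi = \Box\psi$ I would unfold $\dbox V(\psi)$, whose quantification over pairs $x \leq y\,R\,z$ matches $\forall y(x \leq y \to \forall z(y R z \to \st_z(\psi)))$; and for $\phi = \Diamond\psi$ I would unfold $\ddiamond V(\psi)$, whose $\forall y\,\exists z$ pattern matches $\forall y(x \leq y \to \exists z(y R z \wedge \st_z(\psi)))$. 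In each case the induction hypothesis is applied to the immediate subformula evaluated at the relevant bound world.

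Finally, the second equivalence should follow directly: $\mo{M} \Vdash \phi$ means $V(\phi) = X$, which holds iff $\mo{M}, w \Vdash \phi$ for every world $w$, and by the first equivalence this is equivalent to $\mo{M}^\circ \models \st_x(\phi)[x \mapsto w]$ for every $w$, i.e.\ to $\mo{M}^\circ \models \forall x\,\st_x(\phi)$. I do not expect any genuine obstacle here: the only points that will require a little care are the correct handling of the exploding world via the constant $\false$ in the $\bot$ case, and keeping the nested $\leq$/$R$ quantifiers aligned with the operator definitions in the implication, box, and diamond cases.
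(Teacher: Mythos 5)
Your proposal is correct and matches the paper's intent exactly: the paper states this proposition with only the remark that it follows by ``a straightforward induction on the structure of $\phi$'', and your structural induction---matching each clause of the extended valuation against the corresponding clause of $\st_x$, with the global statement read off as a corollary of the pointwise one---is precisely that argument, carried out in full.
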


  To obtain a similar result for frames, rather than models, we make use
  of a \emph{second}-order translation. The target second-order language
  of this translation, called $\SOL$, is $\FOL$ but where unary predicates
  are now second-order variables, making $\SOL$ a \emph{monadic}
  second-order language.
  In $\SOL$ we can quantify over the unary predicates, thereby mimicking valuations.
  Since this second-order quantification
  interprets predicates as subsets of the model and we are only interested
  in certain subsets, namely~\emph{upsets}, we extend the second-order translation with formulas
  ensuring that only correct interpretations of unary predicates are picked.
  More precisely, for an unary predicate $P$ we define
  \begin{equation*}
    \isup(P) := P\false \wedge \forall x \forall y((x \leq y) \wedge Px \to Py).
  \end{equation*}
  When true of $P$, this formula ensures that the interpretation of this predicate
  is an upset containing the exploding world $\expl$.

\begin{definition}
  Let $\phi$ be formulas all of whose proposition letters
  are among $p_1, \ldots, p_n$, and let $P_1, \ldots, P_n$ be the
  corresponding unary predicates.
  Then the \emph{second-order translation} of $\phi$ is defined as
  \begin{equation*}
	\so(\phi) := \forall P_1 \cdots \forall P_n \forall x(\isup(P_1) \wedge \cdots \wedge \isup(P_n) \to \st_x(\phi)).
  \end{equation*}
\end{definition}

   Note that in $\so(\phi)$ we quantify over $x$, indicating that we are looking
   for \emph{global} frame correspondents.
  This globality is witnessed in the following lemma,
  where $\mo{X}^{\circ}$ is the second-order structure interpreting $\SOL$
  emerging from the \CK-frame $\mo{X}$.

\begin{proposition}\label{prop:so-trans}
  For an \CK-frame $\mo{X}$ and formula $\phi \in \Formulas$,
  we have $\mo{X} \Vdash \phi$ iff $\mo{X}^{\circ} \models \so(\phi)$.
\end{proposition}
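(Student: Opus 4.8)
The plan is to prove this by combining the model-level correspondence
(the previous proposition relating $\mo{M}, w \Vdash \phi$ to
$\mo{M}^\circ \models \st_x(\phi)[x \mapsto w]$) with the fact that a valuation
on a \CK-frame $\mo{X}$ is precisely an assignment of upsets containing
$\expl$ to the proposition letters, and that this is exactly what the guard
$\isup(P_i)$ captures on the second-order side.

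\begin{proof}
  Let $p_1, \ldots, p_n$ list the proposition letters of $\phi$, with
  corresponding unary predicates $P_1, \ldots, P_n$.
  The key observation is a bijective correspondence between
  (i) valuations $V$ on $\mo{X}$ and (ii) interpretations of the second-order
  variables $P_1, \ldots, P_n$ as subsets of $X$ satisfying
  $\isup(P_1) \wedge \cdots \wedge \isup(P_n)$.
  Indeed, given a valuation $V$, each $V(p_i)$ is by definition an upset of
  $(X, \leq)$ containing $\expl$, and unwinding the definition of $\isup$
  shows that interpreting $P_i$ as $V(p_i)$ makes
  $\isup(P_i)$ true in $\mo{X}^\circ$; conversely, any interpretation of the
  $P_i$ validating $\isup(P_i)$ is an upset containing $\expl$, hence arises
  from a unique valuation. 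Only the letters occurring in $\phi$ affect its
  truth value, so it suffices to range over valuations of $p_1, \ldots, p_n$.

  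With this in hand I unwind both sides.
  By definition $\mo{X} \Vdash \phi$ means $\mo{M} \Vdash \phi$ for every model
  $\mo{M} = (\mo{X}, V)$, i.e.\ $V(\phi) = X$, i.e.\ $\mo{M}, w \Vdash \phi$ for
  every world $w$ and every valuation $V$.
  On the other side, $\mo{X}^\circ \models \so(\phi)$ unwinds to: for every
  interpretation of $P_1, \ldots, P_n$ satisfying
  $\isup(P_1) \wedge \cdots \wedge \isup(P_n)$, and every $x$, we have
  $\mo{X}^\circ \models \st_x(\phi)$ under that interpretation.
  The previous proposition, applied to the model $\mo{M} = (\mo{X}, V)$
  determined by the interpretation, gives
  $\mo{M}, w \Vdash \phi$ iff $\mo{M}^\circ \models \st_x(\phi)[x \mapsto w]$,
  and $\mo{M}^\circ$ is exactly $\mo{X}^\circ$ equipped with the chosen
  interpretation of the $P_i$. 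Quantifying universally over all admissible
  interpretations and all worlds $w$ on both sides, the correspondence of the
  previous paragraph matches the universal quantifier $\forall P_1 \cdots
  \forall P_n$ guarded by $\isup$ with the universal quantifier over
  valuations, and the $\forall x$ with the universal quantifier over worlds.
  Hence the two statements are equivalent.

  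The only subtle point, and the step I would treat most carefully, is the
  match between the guard $\isup$ and the side condition on valuations: one
  must confirm that $\isup(P)$ genuinely characterises the upsets containing
  $\expl$ rather than merely the upsets, which is why $\isup(P)$ includes the
  conjunct $P\false$ forcing $\expl$ to satisfy $P$. Once this is checked the
  rest is a direct translation between the two universal quantifications,
  appealing to the model-level proposition. The argument is thus a
  straightforward lift of the model-level result to frames by quantifying over
  the admissible valuations, so no further induction on $\phi$ is needed beyond
  that already carried out for the previous proposition.
\end{proof}
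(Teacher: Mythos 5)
Your proposal is correct and is exactly the intended argument: the paper states this proposition without proof, and the evident justification is precisely the bijection you describe between valuations $V:\Prop\to\upp(\mo{X})$ and interpretations of the $P_i$ satisfying the $\isup$ guard, combined with the preceding model-level proposition. Nothing is missing.
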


  We are now in possession of (global) frame correspondents
  for all formulas. However, these are expressed
  in $\SOL$ as they contain second-order quantifiers.
  Next, we turn to the objective of showing that for a certain class of
  formulas the second-order correspondent can be algorithmically reduced to an
  equivalent first-order correspondent, via the elimination of its second-order quantifiers.

\subsection{Sahlqvist Correspondence}\label{subsec:Sahlqvist}

We introduce a class of syntactically defined formulas, called \emph{Sahlqvist formulas},
whose structure allows us to eliminate the second-order quantifiers from their second-order translation,
while preserving equivalence.

\begin{definition}\label{def:Sahl} \
  \begin{enumerate}
    \item A formula $\phi \in \Form$ is called \emph{positive} if it
          is constructed from
          proposition letters, $\top$, $\bot$, $\wedge$, $\vee$, $\Box$ and $\Diamond$.
    \item A \emph{boxed atom} is a formula of the form
          $\Box^n p = \underbrace{\Box \cdots \Box}_{\text{$n$ times}} p$, where $p \in \Prop$.
          We view $p = \Box^0p$ as a boxed atom as well.
    \item A \emph{Sahlqvist antecedent} is a formula constructed from boxed
          atoms using $\top$, $\bot$, $\wedge$ and $\vee$.
    \item A \emph{Sahlqvist formula} is an implication 
          $\phi \to \psi$ where $\phi$ is a Sahlqvist antecedent
          and $\psi$ is a positive formula.
  \end{enumerate}
\end{definition}

\begin{example}
  Some examples of Sahlqvist formulas are
  $\Box p \to p$, $p \to \Diamond p$, $p \to \Box p$, $\Box p \to \Box\Box p$,
  $\Box p \to \Diamond p$, $p \to \Box\Diamond p$ and $\Box\Box p \to \Box p$.
\end{example}

The use of positivity in Sahlqvist formulas is motivated by
the fact that it preserves truth when expanding a valuation.

\begin{definition}
A formula $\phi$ is \emph{upward monotone} if
for any model $(\mo{X},V)$ and world $x \in X$ 
and valuation $V' : \Prop \to \upp(\mo{X})$ such that $V(p) \subseteq V'(p)$ for all $p$,
the following holds:
  \begin{equation*}
    (\mo{X},V),x \Vdash \phi
    \quad\text{implies}\quad
    (\mo{X},V'),x \Vdash \phi.
  \end{equation*}
\end{definition}

\begin{lemma}\label{lem:polmonot}
  If $\phi$ is positive then it is upward monotone.
\end{lemma}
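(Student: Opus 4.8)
The plan is to proceed by structural induction on the positive formula $\phi$, following exactly the grammar given in Definition~\ref{def:Sahl}(1): positive formulas are built from proposition letters, $\top$, $\bot$ using $\wedge$, $\vee$, $\Box$ and $\Diamond$. Throughout, I fix a frame $\mo{X}$, two valuations $V \subseteq V'$ (meaning $V(p) \subseteq V'(p)$ for all $p$), and I aim to show $V(\phi) \subseteq V'(\phi)$ as upsets, which is equivalent to the stated implication at every world $x$. Phrasing the induction as an inclusion of denotations rather than a world-by-world implication makes the inductive bookkeeping cleaner.

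The base cases are immediate: for a proposition letter $p$ we have $V(p) \subseteq V'(p)$ by hypothesis, while $V(\top) = X = V'(\top)$ and $V(\bot) = \{\expl\} = V'(\bot)$ regardless of the valuation. For the conjunction and disjunction cases, I use that $\cap$ and $\cup$ are monotone in both arguments, so from the induction hypotheses $V(\psi) \subseteq V'(\psi)$ and $V(\chi) \subseteq V'(\chi)$ I get $V(\psi) \cap V(\chi) \subseteq V'(\psi) \cap V'(\chi)$ and similarly for $\cup$. The modal cases require inspecting the definitions of $\dbox$ and $\ddiamond$ from Section~\ref{subsec:frames}: both operators are monotone on $\upp(\mo{X})$, since enlarging the target upset $a$ to $a'$ only makes the conditions ``$z \in a$'' (for $\dbox$) and ``$\exists z.\, yRz$ and $z \in a$'' (for $\ddiamond$) easier to satisfy. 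Hence $V(\psi) \subseteq V'(\psi)$ yields $\dbox V(\psi) \subseteq \dbox V'(\psi)$ and $\ddiamond V(\psi) \subseteq \ddiamond V'(\psi)$, which are exactly $V(\Box\psi) \subseteq V'(\Box\psi)$ and $V(\Diamond\psi) \subseteq V'(\Diamond\psi)$.

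The key point to get right — and the reason positivity is exactly the correct hypothesis — is that $\phi$ contains \emph{no} implication (in particular no negation) at a positive position. Were an implication or negation allowed, the antecedent would sit in a contravariant (downward monotone) position and the induction would break, since $\To$ is antitone in its first argument. Because the grammar of positive formulas omits $\to$ entirely, every subformula occurs monotonically, and the induction goes through with only the monotonicity of the five connectives $\wedge, \vee, \Box, \Diamond$ (and the constants). So there is no genuine obstacle here: the only thing to verify carefully is the monotonicity of $\dbox$ and $\ddiamond$ in their argument, which is a routine unfolding of their definitions and is the mild ``main step'' of the proof.
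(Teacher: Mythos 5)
Your proof is correct and is exactly the argument the paper intends: the paper dispatches this lemma with ``a routine induction on the structure of $\phi$,'' and your write-up is that induction carried out explicitly, with the right observation that monotonicity of $\cap$, $\cup$, $\dbox$ and $\ddiamond$ in their arguments is all that is needed once $\to$ is excluded from the grammar. No gaps.
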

\begin{proof}
  This follows from a routine induction on the structure of $\phi$.
\end{proof}

We finally establish a correspondence theorem in the style of Sahlqvist~\cite{Sah75},
between formulas and first-order properties.
As alluded to earlier, we obtain this result by eliminating second-order
quantifiers, by notably leveraging the upward monotonicity of the 
(positive) consequent of any Sahlqvist formula.

\begin{theorem}\label{thm:Sahl}
Every Sahlqvist formula has a computable, first-order, global correspondent on frames.
\end{theorem}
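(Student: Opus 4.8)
The plan is to eliminate the second-order quantifiers of $\so(\phi \to \psi)$ by substituting a first-order definable \emph{minimal valuation}, following the classical Sahlqvist--van Benthem strategy but adapted to the birelational reading of $\st$ and to the side conditions $\isup$. Write $\phi \to \psi$ with proposition letters among $p_1, \dots, p_n$ and corresponding predicates $P_1, \dots, P_n$. First I would simplify the shape of $\so(\phi \to \psi)$. Unfolding $\st_x(\phi \to \psi) = \forall y(x \leq y \wedge \st_y(\phi) \to \st_y(\psi))$ and using reflexivity of $\leq$ (so that instantiating $x := y$ recovers the inner implication, while the converse is immediate), the sentence $\so(\phi \to \psi)$ is equivalent over every frame to
\[
  \forall P_1 \cdots \forall P_n\, \forall y\Big(\textstyle\bigwedge_i \isup(P_i) \wedge \st_y(\phi) \to \st_y(\psi)\Big).
\]
Since $\st$ commutes with $\wedge$ and $\vee$, and $\phi$ is a Sahlqvist antecedent, I would put $\phi$ in disjunctive normal form $\bigvee_j \beta_j$ with each $\beta_j$ a conjunction of boxed atoms (the constants $\top,\bot$ impose no constraint on the $P_i$ and are handled directly); distributing the disjunction across the implication reduces the problem to finitely many conjuncts of the shape $\forall \vec P\, \forall y(\bigwedge_i \isup(P_i) \wedge \st_y(\beta_j) \to \st_y(\psi))$. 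Thus it suffices to treat a single conjunction $\beta$ of boxed atoms as antecedent and to conjoin the resulting correspondents at the end.

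The core step is the minimal valuation. For a boxed atom $\Box^m p_i$, the translation $\st_y(\Box^m p_i)$ is a block of alternating universal quantifiers along $\leq$ and $R$ ending in $P_i$ applied to the last variable; hence making $\st_y(\beta)$ true \emph{forces} $P_i$ to contain every world reachable from $y$ by one of these $\leq/R$-patterns. Crucially, because Sahlqvist antecedents forbid $\Diamond$, every occurrence of a $P_i$ in $\st_y(\beta)$ is universally guarded, so these constraints are genuine lower bounds rather than existential (disjunctive) demands; this is exactly where the diamond-free restriction earns its keep. I would therefore define, for each $i$, a first-order formula $\sigma_i(y, w)$ asserting that $w = \false$, or that some world forced by one of the boxed atoms $\Box^m p_i$ of $\beta$ lies $\leq$-below $w$. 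Interpreting $P_i$ as $\{\, w \mid \sigma_i(y,w) \,\}$ yields the upward closure of the forced worlds together with $\expl$; this is the smallest interpretation satisfying both $\isup(P_i)$ and $\st_y(\beta)$, and it is patently first-order definable.

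It then remains to prove the elimination: $\forall \vec P\, \forall y(\bigwedge_i \isup(P_i) \wedge \st_y(\beta) \to \st_y(\psi))$ is equivalent to $\forall y\,\big(\st_y(\psi)[P_i \mapsto \sigma_i]\big)$. The forward direction instantiates each $P_i$ by the minimal valuation, which satisfies $\isup(P_i) \wedge \st_y(\beta)$ by construction. For the converse I would use positivity of $\psi$: since $\psi$ is positive, each $P_i$ occurs only positively in $\st_y(\psi)$, so $\st_y(\psi)$ is monotone in the $P_i$ --- the first-order shadow, via the standard translation, of Lemma~\ref{lem:polmonot}. Hence for any $\vec P$ satisfying $\isup(P_i) \wedge \st_y(\beta)$ we have $\sigma_i \subseteq P_i$ by minimality, and monotonicity lifts $\st_y(\psi)[\sigma]$ to $\st_y(\psi)[\vec P]$. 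Substituting the definable $\sigma_i$ into $\st_y(\psi)$ removes all second-order quantifiers, and conjoining over the disjuncts $\beta_j$ produces the desired first-order sentence; every operation (normal form, reading off the forced worlds, substitution) is effective, which gives computability. I expect the main obstacle to be the bookkeeping in the minimal-valuation step: correctly reading the $\leq/R$-reachability pattern of each nested boxed atom in the birelational translation, and verifying that the upward closure together with $\expl$ is simultaneously the minimal $\isup$-valuation making $\st_y(\beta)$ true.
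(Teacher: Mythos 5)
Your proposal matches the paper's proof essentially step for step: the same simplification of $\so(\phi\to\psi)$ via reflexivity of $\leq$, the same disjunctive-normal-form reduction to conjunctions of boxed atoms, the same minimal valuation $\sigma(P_i)$ (the $\leq$-upward closure of the worlds reached by the $\leq/R$-patterns of the boxed atoms, together with $\false$), and the same elimination argument combining instantiation with upward monotonicity of the positive consequent (Lemma~\ref{lem:polmonot}). The subtleties you flag at the end --- that $\sigma(P_i)$ must itself satisfy $\isup$, and the bookkeeping of the $R_\Box^{\ell}$ reachability pattern --- are exactly the points the paper's Steps 3 and 4 verify explicitly.
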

\begin{proof}
  By Proposition~\ref{prop:so-trans} we have $\mo{X} \Vdash \psi \to \chi$
  if and only if $\mo{X}^{\circ} \models \so(\psi \to \chi)$.
  We assume that no two quantifiers bind the same variable.
  Let $p_1, \ldots, p_n$ be the propositional variables occurring in
  $\psi$, and write $P_1, \ldots, P_n$ for their corresponding unary predicates.
  We assume that every proposition letter that occurs in $\chi$
  also occurs is $\psi$, for otherwise we may replace it by $\bot$ to
  obtain a formula which is equivalent in terms of validity on frames.

  \bigskip\noindent
  {\it Step 1.}
    By unfolding $\so(\psi \to \chi)$ we get the following formula, where $\ISUP$ is the conjunction
    of all the $\isup(P_i)$ for $1\leq i\leq n$.
     $$\forall P_1 \cdots \forall P_n\forall x(\ISUP \to \forall y(x\leq y \to \st_y(\psi) \to \st_y(\chi)))$$
     Given that $\forall x \forall y(x\leq y \to \st_y(\psi) \to \st_y(\chi))$ is equivalent to
     $\forall x(\st_x(\psi) \to \st_x(\chi))$, we can simplify the SO-translation of our implication to
     $$\forall P_1 \cdots \forall P_n\forall x(\ISUP \wedge \st_x(\psi) \to\POS)$$
     where we named $\POS$ the positive formula $\st_x(\chi)$.
  
  \bigskip\noindent
  {\it Step 2.}
    Using distributivity, we can write $\psi$ as a disjunction
    $\psi_1 \vee \cdots \vee \psi_k$ such that each of the $\psi_i$ is a
    conjunction of boxed atoms. Next, we rewrite
    \begin{align*}
      \forall P_1 \cdots \forall P_n &\forall x (\ISUP \wedge \st_x(\psi) \to \POS) \\
        &= \forall P_1 \cdots \forall P_n \forall x \Big(\Big(\bigvee_{i=1}^k \ISUP \wedge \st_x(\psi_i)\Big) \to \POS \Big) \\
        &= \forall P_1 \cdots \forall P_n \forall x \Big(\bigwedge_{i=1}^k (\ISUP \wedge \st_x(\psi_i) \to \POS) \Big) \\
        &= \bigwedge_{i=1}^k \big( \forall P_1 \cdots \forall P_n \forall x (\ISUP \wedge \st_x(\psi_i) \to \POS) \big)
    \end{align*}
    
    It suffices to find a correspondent for each of the conjuncts,
    as the conjunction of these will then be a correspondent of $\psi \to \chi$.
    So, without loss of generality, we assume that $\psi$ is a conjunction of
    boxed atoms, so that $\st_x(\psi)$ is a conjunction of formulas of the form
    $\forall y(xR_{\Box}^my \to P_iy)$, where $xR_{\Box}^my$ is a notation for

    $$\exists z_1\dots\exists z_{2m}(x\leq z_1 \wedge z_1Rz_2\wedge \dots \wedge z_{2m-2}\leq z_{2m-1} \wedge z_{2m-1}Rz_{2m}\land z_{2m}\leq y)$$
    where all $z_i$ are different and different from $x$ and $y$. Observe the critical case $xR_{\Box}^0y=x\leq y$.
    So we can write $\so(\psi \to \chi)$ as
    \begin{equation}\label{eq:Sahl-1}
      \forall P_1 \cdots \forall P_n \forall x (\ISUP \wedge \BOXAT \to \POS),
    \end{equation}
    where $\BOXAT$ is a conjunction of boxed atoms.
    
    \bigskip\noindent
    {\it Step 3.}
    Now we read off the minimal instances of the $P_i$ making the antecedent
    true. Let $P_i$ be a unary predicate and let
    $\forall y_1(xR_{\Box}^{\ell_1}y_1 \to P_iy_1), \ldots, 
     \forall y_t(xR_{\Box}^{\ell_t}y_t \to P_iy_t)$
    be the boxed atoms in $\psi$ containing $P_i$.
    Then we define
     \begin{equation*}
      \sigma(P_i)
        := \lambda u. ((u \neq \false) \to
           ((xR_{\Box}^{\ell_1}u) \vee \cdots \vee (xR_{\Box}^{\ell_t}u))).
    \end{equation*}
    Then for each second-order model $\mo{M}$ we have
    \begin{equation}\label{eq:Sahl-obs}
      \mo{M} \models (\ISUP\wedge\BOXAT)[w]
      \quad\text{implies}\quad
      \mo{M} \models \forall y (\sigma(P_i)y \to P_iy)[w].
    \end{equation}
    Note that the presence of $\mo{M}\models\ISUP[w]$ above is crucial as it
    makes sure that $\false$, an element satisfying $\sigma(P_i)$, is in the interpretation of $P_i$.
    
    \bigskip\noindent
  {\it Step 4.}
    We now use the $\sigma(P_i)$ as instantiations of the $P_i$.
    Let
    \begin{equation*}
      [\sigma(P_1)/P_1, \ldots, \sigma(P_n)/P_n] \forall x(\ISUP \wedge \BOXAT \to \POS)
    \end{equation*}
    be the formula arising from removing the second order quantifiers
    in~\eqref{eq:Sahl-1} and replacing each $P_i$ with $\sigma(P_i)$.
    By construction,
    $[\sigma(P_1)/P_1, \ldots, \sigma(P_n)/P_n](\ISUP \wedge \BOXAT)$ is true: 
    while it is obvious for $\BOXAT$, some care is required for $\ISUP$.
    First observe that the implication $\sigma(P_i)\false$ is trivially true,
    as $\false\neq\false$ is a contradiction.
    Second, we can establish that $\forall y \forall z((y \leq z) \wedge \sigma(P_i)y \to \sigma(P_i)z)$
    by noting that if $y=\false$ then $z=\false$
    and that  if $xR_{\Box}^{\ell}y$ then $y\leq z$ gives us
    $xR_{\Box}^{\ell}z$.
    With this established, we simplify our formula to the equivalent
    \begin{equation}\label{eq:Sahl-2}
      \forall x([\sigma(P_1)/P_1, \ldots, \sigma(P_n)/P_n]\POS).
    \end{equation}
    Since every proposition letter in $\chi$ is assumed to be in $\psi$,
    the formula $\forall x([\sigma(P_1)/P_1, \ldots, \sigma(P_n)/P_n]\POS)$
    contains no unary predicates. Hence it is a first order frame condition.
    
    We complete the proof by showing that for any \CK-frame $\mo{X}$,
    we have $\mo{X}^{\circ}$ satisfies the formula from~\eqref{eq:Sahl-1}
    (which is equivalent to $\so(\psi \to \chi)$) if and only if it satisfies
    the formula from~\eqref{eq:Sahl-2}.
    The implication from~\eqref{eq:Sahl-1} to~\eqref{eq:Sahl-2} is an instantiation
    of the second order quantifiers. For the other implication, assume
    \begin{equation*}
      \mo{X}^{\circ} \models \forall x([\sigma(P_1)/P_1, \ldots, \sigma(P_n)/P_n]\POS)
    \end{equation*}
    and let $\mo{M}$ a second-order model on $\mo{X}^\circ$ interpreting $P_1, \ldots, P_n$
    and $w$ an interpretation of $x$
    such that
    \begin{equation*}
      \mo{M} \models (\ISUP \wedge \BOXAT)[w]
    \end{equation*}
    We need to show that $\mo{M} \models \POS[w]$.
    From the above and~\eqref{eq:Sahl-obs}, we get for every $P_i$ that
    its interpretation in $\mo{M}$ is an extension of $\sigma(P_i)$, i.e.~
    \begin{equation}\label{eq:Sahl-ext}
      \mo{M} \models \forall y (\sigma(P_i)y \to P_iy)[w]
    \end{equation}
    Additionally, our initial assumption gives us
    \begin{equation*}
      \mo{M} \models [\sigma(P_1)/P_1, \ldots, \sigma(P_n)/P_n]\POS[w]
    \end{equation*}
    Now, $\mo{M} \models\ISUP[w]$ ensures us that the interpretations
    of $P_1,\ldots,P_n$ are indeed upsets containing $\false$.
    Consequently, the above combined with the positivity of $\POS$ and \eqref{eq:Sahl-ext}
    give us $\mo{M} \models \POS[w]$ via an application of Lemma~\ref{lem:polmonot}.
\end{proof}

We put the algorithm within our result to use by considering some examples of
interest to the modal logic community.

\begin{example}
  Consider $\Box p \to p$.
  The second order translation, once simplified via step 1, is
  \begin{align*}
    \so(\Box p \to p) &
      = \forall P \forall x(\isup(P) \wedge \st_x(\Box p) \to \st_x(p)) \\
     &  = \forall P \forall x( \isup(P) \wedge \underbrace{\forall y( x \leq y \to \forall z(y R z \to Pz))}_{\BOXAT} \to Px).
  \end{align*}
  The boxed atom of $p$ gives rise to
  $\sigma(P) = \lambda u. ((u \neq \false)\to\exists v \exists w(x \leq v \wedge vRw \wedge w \leq u))$.
  Substituting this for $P$ in the second order translation makes the antecedent
  true, and leaves us with
  \begin{equation*}
    \forall x (\sigma(P)x)
      = \forall x((x \neq \false) \to \exists v \exists w (x \leq v \wedge vRw \wedge w \leq x)).
  \end{equation*}
  Thus, in a diagram, validity of $\Box p \to p$ corresponds to the following
  being true for all $x$:
  \begin{equation*}
    \begin{tikzpicture}
        \node (x) at (0,0) {$x$};
        \node     at (-.55,0) {$\expl \neq$};
        \node (v) at (.5,1) {$v$};
        \node (w) at (.5,-1) {$w$};
        \draw[-latex, bend left=15, dashed] (x) \ito{left} (v);
        \draw[-latex, bend left=15, dashed] (v) \rto{right} (w);
        \draw[-latex, bend left=15, dashed] (w) \ito{left} (x);
    \end{tikzpicture}
  \end{equation*}
  In this case, the antecedent of the correspondent also holds if $x = \false$,
  because we have $\expl \leq \expl R \expl \leq \expl$, so we can remove the
  condition $x \neq \expl$ from the diagram:
  \begin{equation*}
    \begin{tikzpicture}
        \node (x) at (0,0) {$x$};
        \node (v) at (.5,1) {$v$};
        \node (w) at (.5,-1) {$w$};
        \draw[-latex, bend left=15, dashed] (x) \ito{left} (v);
        \draw[-latex, bend left=15, dashed] (v) \rto{right} (w);
        \draw[-latex, bend left=15, dashed] (w) \ito{left} (x);
    \end{tikzpicture}
  \end{equation*}
\end{example}

\begin{example}
  Consider $p \to \Diamond p$. The simplified second order translation is
  \begin{align*}
    \so(p \to \Diamond p)
      &= \forall P \forall x (\isup(P) \wedge \st_x(p) \to \st_x(\Diamond p)) \\
      &= \forall P \forall x (\isup(P) \wedge Px \to \forall y(x \leq y \to \exists z(yRz \wedge Pz))).
  \end{align*}
  The atom $p$ in the antecedent gives $\sigma(P) = \lambda u.(u \neq \false) \to (x \leq u)$.
  Substituting this for $P$ in the second order translation makes the antecedent
  true, and yields the first-order correspondent
  \begin{equation*}
    \forall x \forall y(x \leq y \to \exists z(yRz \wedge (z \neq \false) \to (x \leq z))).
  \end{equation*}
  This is equivalent to the slightly simpler frame condition
  $\forall x \exists z (xRz \wedge ((z \neq \false) \to (x \leq z)))$.
\end{example}

\begin{example}
  Consider $\Box p \to \Diamond p$. Then
  \begin{equation*}
    \so(\Box p \to \Diamond p)
      = \forall P \forall x( \isup(P) \wedge \forall y( x \leq y \to \forall z(y R z \to Pz)) 
        \to \forall s (x \leq s \to \exists t (sRt \wedge Pt))).
  \end{equation*}
  We have the same definition of $\sigma(P)$ as above. Filling this in gives:
  \begin{equation*}
    \forall x\forall s (x \leq s \to \exists t(sRt \wedge ((t \neq \false) \to \exists v \exists w (x \leq v R w \leq t)))).
  \end{equation*}
  In a diagram:
  $$
    \begin{tikzpicture}
        \node (x) at (0,.1) {$x$};
        \node (s) at (0,2.4) {$s$};
        \node (v) at (.5,1.25) {$v$};
        \node (w) at (2,1) {$w$};
        \node (t) at (2,2) {$t$};
        \node (e) at (2,2.8) {$\expl$};
        \draw[-latex, bend left=15] (x) \ito{left} (s);
        \draw[-latex, bend right=15, dashed] (x) \ito{right} (v);
        \draw[-latex, dashed, bend left=10] (v) \rto{below} (w);
        \draw[-, dashed] (s) \rto{above,pos=.8} (.6,2.4);
        \draw[-latex, dashed] (.6,2.4) to[out=0,in=150] (t);
        \draw[-latex, dashed] (.6,2.4) to[out=0,in=210] (e);
        \draw[-latex, dashed] (w) \ito{right} (t);
    \end{tikzpicture}
  $$
  We now see that we can simplify this by identifying $v = s$ and $w = t$
  to get a simpler condition. Furthermore, the universal quantification over
  $x$ implies that we may take $s = x$. So we get:
  $$
    \forall x \exists t (x R t).
  $$
  In other words, we simply find seriality of $R$.
\end{example}

\begin{example}
  In this example we consider the transitivity axiom $\Box p \to \Box\Box p$.
  In the constructive case, this no longer exactly corresponds to transitivity.
  So let us compute its correspondent.
  We have
  \begin{equation*}
    \so(\Box p \to \Box\Box p)
      = \forall P \forall x(\isup(P) \wedge \forall y \forall z(x \leq y Rz \to Pz))
        \to \forall s_1 \forall s_2 \forall s_3 \forall s_4(x \leq s_1 R s_2 \leq s_3 R s_4 \to Ps_4)
  \end{equation*}
  This gives $\sigma(P) = \lambda u. (u \neq \false) \to (\exists v \exists w(x \leq v \wedge v R w \wedge w \leq u))$,
  so that a first-order correspondent is given by
  \begin{equation*}
    \forall x \forall s_1 \forall s_2 \forall s_3 \forall s_4(x \leq s_1 R s_2 \leq s_3 R s_4 \neq \false \to \exists v \exists w (x \leq v R w \leq s_4)).
  \end{equation*}
  In a picture:
  \begin{equation*}
    \begin{tikzpicture}[xscale=.8]
        \node (x)  at (0,-.5) {$x$};
        \node (s1) at (0,1) {$s_1$};
        \node (s2) at (2,1) {$s_2$};
        \node (s3) at (2,2) {$s_3$};
        \node (s4) at (4,2) {$s_4$};
        \node (e)  at (4.7,2) {$\neq \expl$};
        \node (v)  at (.75,.5) {$v$};
        \node (w)  at (3.75,.5) {$w$};
        \draw[-latex, bend left=10] (x) \ito{left} (s1);
        \draw[-latex, bend left=5]  (s1) \rto{above} (s2);
        \draw[-latex] (s2) \ito{left} (s3);
        \draw[-latex] (s3) \rto{above} (s4);
        \draw[-latex, dashed, bend right=10] (x) \ito{right} (v);
        \draw[-latex, dashed, bend right=5] (v) \rto{below} (w);
        \draw[-latex, dashed, bend right=5] (w) \ito{right} (s4);
    \end{tikzpicture}
  \end{equation*}
  This can be slightly simplified by equating $x$ and $s_1$, resulting
  in the following diagram:
  \begin{equation*}
    \begin{tikzpicture}[xscale=.8]
        \node (x)  at (0,0) {$x$};
        \node (s2) at (2,0) {$s_2$};
        \node (s3) at (2,2) {$s_3$};
        \node (s4) at (4,2) {$s_4$};
        \node (e)  at (4.7,2) {$\neq \expl$};
        \node (v)  at (0,1) {$v$};
        \node (w)  at (4,1) {$w$};
        \draw[-latex, bend right=5]  (x) \rto{above} (s2);
        \draw[-latex] (s2) \ito{left,pos=.7} (s3);
        \draw[-latex] (s3) \rto{above} (s4);
        \draw[-latex, dashed, bend right=10] (x) \ito{left} (v);
        \draw[-latex, dashed, bend right=5] (v) \rto{below,pos=.75} (w);
        \draw[-latex, dashed, bend right=5] (w) \ito{right} (s4);
    \end{tikzpicture}
  \end{equation*}
  Finally, writing $R^{\Box} = ({\leq} \circ R \circ {\leq})$ yields:
  \begin{center}
    \begin{tikzpicture}[xscale=.8,baseline=1]
        \node (x)  at (0,0) {$x$};
        \node (s2) at (2,0) {$s_2$};
        \node (s3) at (2,1) {$s_3$};
        \node (s4) at (4,1) {$s_4$};
        \node (e)  at (4.7,1) {$\neq \expl$};
        \draw[-latex, bend left=5]  (x) \rto{above} (s2);
        \draw[-latex] (s2) \ito{left,pos=.7} (s3);
        \draw[-latex] (s3) \rto{above} (s4);
        \draw[-latex, dashed, bend right=37] (x) to node[below,pos=.6]{\footnotesize{$R^{\Box}$}} (s4);
    \end{tikzpicture}
    \qquad or \qquad
    \begin{tikzpicture}[xscale=.8,baseline=1]
        \node (x)  at (0,0) {$x$};
        \node (s3) at (2,1) {$s_3$};
        \node (s4) at (4,1) {$s_4$};
        \node (e)  at (4.7,1) {$\neq \expl$};
        \draw[-latex, bend left=5]  (x) to node[above,pos=.4]{\footnotesize{$R^{\Box}$}} (s3);
        \draw[-latex] (s3) \rto{above} (s4);
        \draw[-latex, dashed, bend right=20] (x) to node[below,pos=.6]{\footnotesize{$R^{\Box}$}} (s4);
    \end{tikzpicture}
  \end{center}
\end{example}

\subsection{Sahlqvist Completeness}\label{subsec:Sahl-compl}

  We leverage the correspondence just established to show that adding a set $\Lambda$ of Sahlqvist 
  formulas as axioms to \CK~leads to a logic which is complete with respect to the
  class of frames $\Lambda$ corresponds to.
  
  In the classical case, this is proved by showing that Sahlqvist formulas are
  \emph{d-persistent}: their validity on a descriptive frame entails their validity
  on the underlying frame.
  By algebraic completeness the formulas in $\Lambda$ are always evaluated to $\top$
  in the Lindenbaum algebra $\LT{\Lambda}$,
  hence valid in the dual descriptive frame $\LT{\Lambda}_*$.
  D-persistence then entails that $\Lambda$ is valid on the underlying
  (non-descriptive) frame $\LT{\Lambda}_+$.
  By Sahlqvist correspondence, this ensures that the ``canonical model" considered
  in the completeness proof, i.e.~$\LT{\Lambda}_+$, is indeed in the adequate class of frames.
  
  Unfortunately, the simplicity of this argument cannot be replicated here.
  The dual of the Lindenbaum algebra is to blame: 
  it is very ``noisy", in the sense that the $\leq$-clusters contain too wide a variety of segments.
  This prevents minimal valuations on this descriptive frame from being 
  \emph{closed}, a crucial component in the Sahlqvist completeness proof.
  We resolve this by introducing an intermediate step before 
  forgetting the admissible structure: we massage the descriptive frame 
  into a \emph{semi-descriptive} frame by pruning $\leq$-clusters.
  This procedure ensures that the minimal valuation is closed,
  allowing us to prove Sahlqvist completeness.

\begin{definition}\label{def:semi-descr}
  A \emph{semi-descriptive \CK-frame} is a general \CK-frame
  $\mo{G} = (X, \expl, \leq, R, A)$ that satisfies:
  \begin{enumerate}
    \item[\eqref{it:D1}]
          if $x \not\leq y$ then there exists $a \in A$ such that
          $x \in a$ and $y \notin a$, for all $x, y \in X$;
    \myitem{D2$'$}\label{it:D2p}
          for each $x \in X$,
          \begin{equation*}
            R[x] = \bigcap \{ a \in A \mid R[x] \subseteq a \} 
                   \cap \bigcap \{ -b \in -A \mid R[x] \subseteq -b \};
          \end{equation*}
    \item[\eqref{it:D3}]
          if $B \subseteq A \cup -A$ has the finite intersection property,
          then $\bigcap B \neq \emptyset$;
    \myitem{D4$'$}\label{it:D4p}
          for any $x \in X$ there exists $\widehat{x} \in X$ such that
          $x \sim \widehat{x}$ and $R[\widehat{x}] = \bigcap \{ a \in A \mid x \in \dbox a \}$.
  \end{enumerate}
\end{definition}

  Semi-descriptive frames have several useful properties,
  which allow for the Sahlqvist canonicity theorem to go through.
  These are most easily expressed using topological language.
  To this end, for any general \CK-frame $\mo{G} = (X, \expl, \leq, R, A)$
  let $\tau_A$ be the topology generated by the (clopen) subbases $A \cup -A$.
  Then $\mo{G}$ satisfies~\eqref{it:D3} if and only if $(X, \tau_A)$ is compact.
  Moreover, using the fact that $A$ is closed under binary unions and
  intersections, it can be seen that a set $C \subseteq X$ is \emph{closed}
  if and only if for each $x \in X \setminus C$ there exist $a \in A$
  and $-b \in -A$ such that $x \in a \cap -b$ and $a \cap -b$ is disjoint
  from $C$.

\begin{lemma}\label{lem:semdes-top}
  Let $\mo{G} = (X, \expl, \leq, R, A)$ be a semi-descriptive frame. Then
  \begin{enumerate}
    \item $\{ x \} = \bigcap \{ a \in A \mid x \in a \} 
                \cap \bigcap \{ -b \in -A \mid x \in -b \}$
          for every $x \in X$;
    \item if $C \subseteq X$ is closed, then so is
          ${\uparrow}_{\leq}C := \{ y \in X \mid x \leq y \text{ for some } x \in C \}$;
    \item if $C$ is a closed upset, then $R[C] := \bigcup \{ R[x] \mid x \in C \}$ is closed.
  \end{enumerate}
\end{lemma}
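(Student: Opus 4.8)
The plan is to work topologically in $(X,\tau_A)$, where by the remarks preceding the lemma every $a\in A$ is clopen, $A$ is closed under finite unions and under $\dbox$, and \eqref{it:D3} makes the space compact, so that every closed set is compact. I will repeatedly use the recorded characterisation that a set $D$ is closed iff every $w\notin D$ can be separated from $D$ by a subbasic clopen from $A\cup -A$; concretely it suffices to produce, for each such $w$, some $a\in A$ with $D\subseteq a$ and $w\notin a$, whence $-a$ is the required neighbourhood.

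For (1), the inclusion $\supseteq$ is clear since $x$ lies in every set on the right. For $\subseteq$, let $y$ lie in the intersection. If $x\not\leq y$ then \eqref{it:D1} yields $a\in A$ with $x\in a$, $y\notin a$, contradicting $y\in\bigcap\{a\in A\mid x\in a\}$; so $x\leq y$. If $y\not\leq x$ then \eqref{it:D1} yields $a\in A$ with $y\in a$, $x\notin a$, so $-a\in -A$ has $x\in -a$, $y\notin -a$, contradicting the second factor; so $y\leq x$. Thus $x\sim y$, and since $\leq$ is antisymmetric on a semi-descriptive frame (its clusters having been pruned to singletons) we get $y=x$. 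For (2), take $w\notin{\uparrow}_{\leq}C$, so $x\not\leq w$ for all $x\in C$, and use \eqref{it:D1} to pick $a_x\in A$ with $x\in a_x$, $w\notin a_x$. These cover the compact set $C$; a finite subcover gives $a=a_{x_1}\cup\cdots\cup a_{x_k}\in A$ with $C\subseteq a$, hence ${\uparrow}_{\leq}C\subseteq a$ as $a$ is an upset, and $w\notin a$. So $-a$ separates $w$ from ${\uparrow}_{\leq}C$.

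Part (3) is the crux and the only place the modal structure is needed. Let $w\notin R[C]$; I aim to cover $C$ by admissible boxes whose $R$-images avoid $w$. Fix $x\in C$. By \eqref{it:D4p} there is $\widehat{x}\sim x$ with $R[\widehat{x}]=\bigcap\{a\in A\mid x\in\dbox a\}$, and since $C$ is an upset and $x\leq\widehat{x}$ we have $\widehat{x}\in C$, so $R[\widehat{x}]\subseteq R[C]$ and hence $w\notin R[\widehat{x}]$. Therefore $w$ lies outside $\bigcap\{a\in A\mid x\in\dbox a\}$, giving some $a_x\in A$ with $x\in\dbox a_x$ and $w\notin a_x$. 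Now $\dbox a_x\in A$ is a clopen neighbourhood of $x$, and from the definition of $\dbox$ one reads off $R[\dbox a_x]\subseteq a_x$, since any $u\in\dbox a_x$ satisfies $R[u]\subseteq a_x$ (take $u\leq u$). The family $\{\dbox a_x\mid x\in C\}$ covers the compact set $C$; a finite subcover together with $a=a_{x_1}\cup\cdots\cup a_{x_m}\in A$ yields $R[C]\subseteq\bigcup_i R[\dbox a_{x_i}]\subseteq a$ with $w\notin a$, so $-a$ separates $w$ from $R[C]$.

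The \textbf{main obstacle} is the very first move in (3): for each $x\in C$ one must find an admissible set $a_x$ that both excludes $w$ and satisfies $x\in\dbox a_x$, so that $\dbox a_x$ is a neighbourhood of $x$ with $R$-image trapped inside $a_x$. This is exactly what \eqref{it:D4p} provides, and it becomes applicable only because the upset hypothesis on $C$ lets us pull the representative $\widehat{x}$ back into $C$ and thereby conclude $w\notin R[\widehat{x}]$. Once this admissible box is in hand, (2) and (3) reduce to the same routine compactness-and-finite-union pattern.
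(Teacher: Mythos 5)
Your proofs of items (2) and (3) are correct and follow essentially the same route as the paper: separate a point outside the set in question by covering the compact set $C$ with admissible upsets (obtained from \eqref{it:D1} in item (2), and from \eqref{it:D4p} together with the observation $R[\dbox a]\subseteq a$ in item (3)) and pass to a finite subcover. The one substantive step in (3) --- using that $C$ is an upset to pull $\widehat{x}$ back into $C$ and thereby conclude $w\notin R[\widehat{x}]$ --- is exactly the paper's move as well.

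Item (1) is where there is a genuine problem, and it lies in your appeal to antisymmetry. A semi-descriptive frame is, by Definition~\ref{def:semi-descr}, still only a \emph{preordered} structure, and the pruning of Lemma~\ref{lem:des-semdes} does not collapse clusters to singletons: it keeps, within each cluster, every world that equals its own convex closed companion, and a single cluster may contain several such worlds (distinct convex-closed successor sets $R[x]\neq R[y]$ with $x\sim y$). So the parenthetical ``its clusters having been pruned to singletons'' is false, and your argument only establishes that the displayed intersection equals the cluster $\{y\in X\mid y\sim x\}$, which is all that \eqref{it:D1} can possibly give, since admissible sets cannot separate two points of the same cluster. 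To be fair, the paper's own one-line proof (``follows immediately from \eqref{it:D1}'') glosses over the same point, and the statement of item (1) appears to need weakening to ``the cluster of $x$ is closed''; that weaker form suffices for the only downstream use (closedness of the minimal valuation in Theorem~\ref{thm:sahlcan}, which only ever takes $\leq$-upsets and $R$-images of these sets, and these are insensitive to replacing a point by its cluster). But as written, your derivation of $y=x$ from $y\sim x$ is unjustified.
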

\begin{proof}
  The first item follows immediately from~\eqref{it:D1}.
  For the second item, suppose $C \subseteq X$ is closed and $y \notin {\uparrow}C$.
  Then $x \not\leq y$ for each $x \in C$, so we get some $a_x \in A$ containing
  $x$ but not $y$. Then
  \begin{equation*}
    C \subseteq \bigcup \{ a_x \mid x \in C \},
  \end{equation*}
  so by compactness we can find $a_1, \ldots, a_n$ such that
  $C \subseteq a_1 \cup \cdots \cup a_n$.
  Since each of $a_1, \ldots, a_n$ is an upset, we find
  ${\uparrow}C \subseteq a_1 \cup \cdots \cup a_n$,
  and hence $-a_1 \cap \cdots \cap -a_n$ is an open set containing $y$
  disjoint from ${\uparrow}C$, and we may conclude that ${\uparrow}C$ is closed.
  
  Finally, suppose $C$ is a closed upset.
  The for each $x \in C$ we have $\widehat{x} \in C$, because $C$ is an upset
  and $x \sim \widehat{x}$. By construction, $R[x] \subseteq R[\widehat{x}]$,
  so that
  \begin{equation*}
    R[C] = \bigcup \{ R[\widehat{x}] \mid x \in C \}.
  \end{equation*}
  Now suppose $y \notin R[C]$.
  Then for each $x \in C$ we have $y \notin R[\widehat{x}]$,
  so by definition of $\widehat{x}$ there exists some $a_x \in A$ such that
  $x \in \dbox a$ while $y \notin a_x$. This yields an open cover
  \begin{equation*}
    C \subseteq \bigcup \{ \dbox a_{\widehat{x}} \mid x \in C \}
  \end{equation*}
  and since $C$ is a closed, hence compact, we can find
  $a_1, \ldots, a_n$ such that $C \subseteq \dbox a_1 \cup \cdots \cup \dbox a_n$.
  As a consequence, $-a_1 \cap \cdots \cap -a_n$ is an open set
  containing $y$ disjoint from $R[C]$.
  This proves that $R[C]$ is closed.
\end{proof}

  Any descriptive \CK-frame can be turned into a semi-descriptive \CK-frame
  by pruning part of the worlds. This process is such that no entire cluster
  is omitted, and such that the remaining worlds satisfy precisely the same
  formulas in the pruned frame as they did in the original one.

\begin{definition}\label{def:ccc}
  Let $\mo{D} = (X, \expl, \leq, R, A)$ be a descriptive \CK-frame.
  For a world $x \in X$, let
  \begin{equation*}
    C_x = \bigcap \{ a \in A \mid R[x] \subseteq a \} 
          \cap \bigcap \{ -b \in -A \mid R[x] \cap b = \emptyset \}.
  \end{equation*}
  Then we can use~\eqref{it:D4} to find a (unique) world $\ccc{x} \in X$ such
  that $\ccc{x} \sim x$ and $R[\ccc{x}] = C_x$.
  We call $\ccc{x}$ the \emph{convex closed companion} of $x$.
\end{definition}
  
\begin{lemma}\label{lem:des-semdes}
  Let $\mo{D} = (X, \expl, \leq, R, A)$ be a descriptive frame.
  Define $\ccc{X} = \{ x \in X \mid \ccc{x} = x \}$ and let
  $\cleq$ and $\cR$ be the restrictions of $\leq$ and $R$ to $\ccc{X}$.
  For $a \in A$, define $\ccc{a} = a \cap \ccc{X}$ and let $\ccc{A} = \{ \ccc{a} \mid a \in A \}$.
  Then
  \begin{equation*}
    \ccc{\mo{D}} := (\ccc{X}, \expl, \cleq, \cR, \ccc{A})
  \end{equation*}
  is a semi-descriptive frame.
\end{lemma}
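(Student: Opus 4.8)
The plan is to verify, in order, that $\ccc{\mo D}$ is a \CK-frame, that $\ccc A$ is closed under all operations (so that $\ccc{\mo D}$ is a \emph{general} \CK-frame), and finally that the four semi-descriptive conditions hold. Throughout I would first record the basic properties of the companion map $x \mapsto \ccc x$. Since $x \sim \ccc x$ and every $a \in A$ is an upset, membership in admissible sets is insensitive to passing to companions: $x \in a$ iff $\ccc x \in a$. Moreover $R[x] \subseteq C_x = R[\ccc x]$, because any $z \in R[x]$ lies in every $a \supseteq R[x]$ and avoids every $b$ disjoint from $R[x]$. The one point needing a short computation is \emph{idempotency}, $\ccc x \in \ccc X$: using that $C_x$ is itself contained in every $a \in A$ with $R[x] \subseteq a$ and disjoint from every $b \in A$ with $R[x] \cap b = \emptyset$, one shows $C_{\ccc x} = C_x$, whence $\ccc{\ccc x} = \ccc x$ by the uniqueness coming from \eqref{it:D1} and \eqref{it:D2}. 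The same membership computation gives $\ccc\expl = \expl$, so $\expl \in \ccc X$, and $\ccc{\mo D}$ is then immediately a \CK-frame, with $\ccc A$ containing $\{\expl\} = \ccc{\{\expl\}}$ and its top $\ccc X$ (the image of $X \in A$), and closed under $\cap,\cup$ since $\ccc a \cap \ccc b = \ccc{a \cap b}$ and dually for unions.

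The heart of the closure argument is the identity, for all $a, b \in A$,
\begin{equation*}
  \dbox_{\ccc{\mo D}} \ccc a = \ccc{\dbox a}, \qquad
  \ddiamond_{\ccc{\mo D}} \ccc a = \ccc{\ddiamond a}, \qquad
  \ccc a \To_{\ccc{\mo D}} \ccc b = \ccc{a \to b},
\end{equation*}
where the left-hand operations are computed inside $\ccc{\mo D}$. In each case the inclusion $\supseteq$ is routine because the quantifiers of $\ccc{\mo D}$ range over a subset of those of $\mo D$. For $\subseteq$ I would replace arbitrary witnesses $y, z \in X$ by their companions: from $x \le y\,R\,z$ in $\mo D$ one gets $x \le \ccc y$ (as $y \le \ccc y$), while $z \in R[y] \subseteq R[\ccc y]$ together with $z \sim \ccc z$ and \eqref{it:D2} yields $\ccc y\,\cR\,\ccc z$; membership preservation then transports the conclusion $\ccc z \in a$ back to $z \in a$. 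The diamond case additionally uses that $z \in C_y \cap a$ forces $R[y] \cap a \neq \emptyset$ (otherwise $-a$ would appear in the intersection defining $C_y$). These identities show $\ccc A$ is closed under $\To,\dbox,\ddiamond$, completing the proof that $\ccc{\mo D}$ is a general \CK-frame.

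For the semi-descriptive axioms, \eqref{it:D1} is inherited verbatim (if $x \not\le y$ the separating $a \in A$ given by \eqref{it:D1} for $\mo D$ restricts to $\ccc a$). Axiom \eqref{it:D2p} follows from the defining property $R[x] = C_x$ of a point $x \in \ccc X$: for $z$ in the right-hand intersection one gets $z \in a$ for every admissible $a \supseteq R[x]$ and $z \notin b$ for every admissible $b$ disjoint from $R[x]$, by matching these against the sets $\ccc a$ and $-\ccc b$ occurring in the intersection, whence $z \in C_x \cap \ccc X = \cR[x]$; the reverse inclusion is immediate. For compactness \eqref{it:D3}, given $B \subseteq \ccc A \cup -\ccc A$ with the finite intersection property I would lift it to a family $\widetilde B \subseteq A \cup -A$, obtain $x \in \bigcap \widetilde B$ from \eqref{it:D3} for $\mo D$, and note $\ccc x \in \bigcap \widetilde B \cap \ccc X = \bigcap B$, since membership preservation respects both admissible sets and their complements.

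The main obstacle is \eqref{it:D4p}. Using the operation-preservation identity the target set rewrites as $\ccc X \cap U$, where $U = \bigcap \{ a \in A \mid x \in \dbox a \}$, so it suffices to produce $\widehat x \sim x$ in $\ccc X$ with $R[\widehat x] = U$. I would obtain such a world by applying \eqref{it:D4} of the descriptive frame $\mo D$ to $U$: the $\dbox$-hypothesis is immediate, while for the $\ddiamond$-hypothesis I argue that if $U \cap a = \emptyset$ then, since $\dbox$ preserves finite meets and the family $\{ a \mid x \in \dbox a \}$ is therefore closed under intersection, compactness produces a single $a_0$ with $x \in \dbox a_0$ and $a_0 \cap a = \emptyset$, forcing $x \notin \ddiamond a$. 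Finally I must check that the witness $x'$ returned by \eqref{it:D4} already lies in $\ccc X$: since $U$ is an intersection of admissible sets, $U = \bigcap \{ a \in A \mid U \subseteq a \}$, so the definition of $C_{x'}$ gives $C_{x'} = U = R[x']$ and hence $\ccc{x'} = x'$. Taking $\widehat x = x'$ then discharges \eqref{it:D4p} and completes the proof.
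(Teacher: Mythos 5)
Your proof is correct and follows essentially the same route as the paper's: the same decomposition into showing $\ccc{\mo{D}}$ is a general frame via the pruning identities $\ccc{a \To b} = \ccc{a} \To \ccc{b}$, $\ccc{\dbox a} = \dbox\ccc{a}$, $\ccc{\ddiamond a} = \ddiamond\ccc{a}$, the same companion-replacement trick for witnesses, and the same appeal to \eqref{it:D4} of $\mo{D}$ to establish \eqref{it:D4p}. The only local deviations are that you spell out the idempotency $\ccc{\ccc{x}} = \ccc{x}$ and the membership $x' \in \ccc{X}$ (which the paper dismisses as ``clearly''), and that you discharge the $\ddiamond$-hypothesis of \eqref{it:D4} by a compactness argument where the paper uses the shorter observation that $R[x] \subseteq U$ and $x \in \ddiamond a$ already give $U \cap a \supseteq R[x] \cap a \neq \emptyset$.
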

\begin{proof}
  Observe that $R[\expl] = \{ \expl \} \in A$, so $\ccc{\expl} = \expl \in \ccc{X}$.
  By definition of $\cleq$ the inconsistent world still satisfies the required
  maximality condition. We now prove that $\ccc{\mo{D}}$ is a general \CK-frame
  that satisfies the four conditions from Definition~\ref{def:semi-descr}.
  
  \medskip\noindent
  \textit{Claim 1: $\ccc{\mo{D}}$ is a general frame.} 
  The family $\ccc{A}$ contains $\{ \expl \}$ and $\ccc{X}$
  because $A$ contains $\{ \expl \}$ and $X$.
  A routine verification shows that $\ccc{A}$ is closed under binary intersections,
  and binary unions.
  Using the fact that for each $x \in X$ we have $x \sim \ccc{x} \in \ccc{X}$,
  it can easily be verified that $\ccc{a} \To \ccc{b} = \ccc{a \To b}$,
  so $\ccc{A}$ is also closed under $\To$.
  For the modal cases, we show $\ccc{\dbox a} = \dbox \ccc{a}$
  and $\ccc{\ddiamond a} = \ddiamond \ccc{a}$.

  Suppose $\ccc{x} \in \ccc{\dbox a}$. Then $\ccc{x} \in \dbox a \cap \ccc{X}$.
  So for all $y, z$ such that $\ccc{x} \leq y R z$ we have $z \in a$.
  In particular, if $y, z \in \ccc{X}$ then $\ccc{x} \ccc{\leq} y \ccc{R} z$,
   which implies $z \in a \cap \ccc{X}$, so $z \in \ccc{a}$.
  Therefore $\ccc{x} \in \dbox\ccc{a}$.
  For the converse, suppose $\ccc{x} \notin \ccc{\dbox a}$.
  Then $\ccc{x} \notin \dbox a$, so there exist $y, z \in X$ such that
  $\ccc{x} \leq y R z$ and $z \notin a$.
  Since $y \sim \ccc{y}$ and $z \sim \ccc{z}$,
  with the help of~\eqref{it:D2} we find
  $\ccc{x} \cleq \ccc{y} \ccc{R} \ccc{z}$.
  Furthermore, since $a$ is an upset we have $\ccc{z} \in a$, hence
  $\ccc{z} \in a \cap \ccc{X} = \ccc{a}$. Therefore $\ccc{x} \notin \dbox \ccc{a}$.
  
  Suppose $\ccc{x} \in \ccc{\ddiamond a}$. Then $\ccc{x} \in \ddiamond a \cap \ccc{X}$.
  So for all $y$ such that $\ccc{x} \leq y$ there is a $z$ with $y R z$ and $z \in a$.
  By letting $y \in \ccc{X}$ such that $\ccc{x} \ccc{\leq} y$, we obtain a $z\in X$ with $y R z$ and $z\in a$.
  Now, consider $\ccc{z}$: by $\ccc{z}\sim z$ any $a\in A$ (resp.~$-a\in -A$) satisfies
  $z\in a$ iff $\ccc{z}\in a$, given that it is an upset (resp.~downset).
  Following Definition~\ref{def:ccc} we get $y \ccc{R} \ccc{z}$ from $y R z$.
  Therefore $\ccc{x} \in \ddiamond\ccc{a}$.
  For the converse, suppose $\ccc{x} \notin \ccc{\ddiamond a}$.
  Then $\ccc{x} \notin \ddiamond a$, so there exists $y \in X$ with $\ccc{x} \leq y$
  such that for all $z\in X$ if $y R z$ then $z \notin a$.
  In particular, we obtain that for all $z\in \ccc{X}$ if $y \ccc{R} z$ then $z \notin \ccc{a}$.
  Again, we exploit $y \sim \ccc{y}$ to get $\ccc{x}\ccc{\leq}\ccc{y}$. 
  If we show that $z\in \ccc{X}$ if $\ccc{y} \ccc{R} z$ then $z \notin \ccc{a}$, then we are done.
  For such a $z$ we notably get $z\in  \bigcap \{ -b \in -A \mid R[y] \cap b = \emptyset \}$
  from Definition~\ref{def:ccc}.
  As $R[y]\cap a=\emptyset$, we get that $z\in -a$. 
  Set-theoretically speaking, $\ccc{a}\subseteq a$ informs us that $-a \subseteq -\ccc{a}$, hence $z\in -\ccc{a}$.
  As we established that $z\notin \ccc{a}$, we proved $\ccc{x} \notin \ddiamond \ccc{a}$.
  
  \medskip\noindent
  \textit{Claim 2: For any $x \in \ccc{X}$ and $a \in A \cup -A$, we have
    $R[x] \subseteq a$ iff $\cR[x] \subseteq \ccc{a}$.}
    The left-to-right inclusion follows immediately from the
    definitions of $\cR$ and $\ccc{a}$.
    For the converse, suppose $R[x] \not\subseteq a$.
    Then there exists some $y \in R[x]$ such that $y \notin a$.
    This implies that $\ccc{y} \in R[x]$ because $\mo{D}$ satisfies~\eqref{it:D2}
    and $y \sim \ccc{y}$, so $\ccc{y} \in \cR[x]$.
    Besides, $\ccc{y} \notin a$ because $a$ is an upset or a downset
    and $y \sim \ccc{y}$. Therefore $\cR[x] \not\subseteq \ccc{a}$.
  
  \medskip\noindent
  \textit{Claim 3: $\ccc{\mo{D}}$ satisfies~\eqref{it:D1}, \eqref{it:D2p}, \eqref{it:D3} and \eqref{it:D4p}.}
    The fact that $\mo{D}$ satisfies~\eqref{it:D1} implies that $\ccc{\mo{D}}$
    satisfies this as well. To see that $\ccc{\mo{D}}$ satisfies~\eqref{it:D2p},
    combine the definition of $\ccc{x}$, Claim 2, and the fact that
    $\ccc{-b} = -\ccc{b}$ for all $b \in A$, to find that for any
    $x \in \ccc{X}$,
    \begin{align*}
      \cR[x]
        &= \ccc{X} \cap R[x]
        &\text{(definition of $\cR$)} \\
        &= \ccc{X} \cap \bigcap \{ a \in A \mid R[x] \subseteq a \} 
                   \cap \bigcap \{ -b \in -A \mid R[x] \subseteq -b \}
        &\text{(because $x \in \ccc{X}$)} \\
        &= \ccc{X} \cap \bigcap \{ a \in A \mid \cR[x] \subseteq \ccc{a} \} 
                   \cap \bigcap \{ -b \in -A \mid \cR[x] \subseteq \ccc{-b} \}
        &\text{(by Claim 2)} \\
        &= \bigcap \{ a \cap \ccc{X} \in \ccc{A} \mid \cR[x] \subseteq \ccc{a} \} 
           \cap \bigcap \{ -b \cap \ccc{X} \in -\ccc{A} \mid \cR[x] \subseteq \ccc{-b} \} \\
        &= \bigcap \{ \ccc{a} \in \ccc{A} \mid \cR[x] \subseteq \ccc{a} \} 
           \cap \bigcap \{ -\ccc{b} \in -\ccc{A} \mid \cR[x] \subseteq -\ccc{b} \}
        &\text{(definition of $\ccc{a}$)}
    \end{align*}

    For~\eqref{it:D3},
    suppose $B \subseteq \ccc{A} \cup -\ccc{A}$ has the finite intersection property.
    Then clearly the set
    $B^{\circ} := \{ b \in A \cup -A \mid \ccc{b} \in B \} \subseteq A \cup -A$
    has the finite intersection property, so $\mo{D}$ satisfies~\eqref{it:D3}
    we find $\bigcap B^{\circ} \neq \emptyset$.
    Let $y \in \bigcap B^{\circ}$. Then $\ccc{y} \in \bigcap B$,
    so $\bigcap B \neq \emptyset$.
    Therefore $\ccc{\mo{D}}$ satisfies~\eqref{it:D3}.

    Finally, to prove that~\eqref{it:D4p} holds, suppose $x \in \ccc{X}$
    and let $U = \bigcap \{ a \in A \mid x \in \dbox a \} \subseteq X$.
    Then $R[x] \subseteq U$ by definition of $\dbox$.
    If $x \in \ddiamond a$ for some $a \in A$ then $R[x] \cap a \neq \emptyset$,
    hence $U \cap a \neq \emptyset$. 
    Using the fact that $\mo{D}$ satisfies~\eqref{it:D4}, we find some
    $\widehat{x} \in X$ such that $x \sim \widehat{x}$ and $R[\widehat{x}] = U$.
    Then clearly $\widehat{x} \in \ccc{X}$. Furthermore,
    \begin{align*}
      \cR[\widehat{x}]
        &= \ccc{X} \cap R[\widehat{x}]
        &\text{(definition of $\cR$)} \\
        &= \ccc{X} \cap \bigcap \{ a \in A \mid R[x] \subseteq a \}
        &\text{(definition of $\widehat{x}$)} \\
        &= \ccc{X} \cap \bigcap \{ a \in A \mid \cR[x] \subseteq \ccc{a} \}
        &\text{(by Claim 2)} \\
        &= \bigcap \{ a \cap \ccc{X} \mid \cR[x] \subseteq \ccc{a} \} \\
        &= \bigcap \{ \ccc{a} \mid \cR[x] \subseteq \ccc{a} \},
        &\text{(definition of $\ccc{a}$)}
    \end{align*}
    as desired.
\end{proof}

  Recall that we can extend the valuation $V$ of a (general) \CK-frame
  to all formulas by letting $V(\phi)$ be the truth set of $\phi$.

\begin{lemma}\label{lem:ccc-D}
  Let $\mo{D} = (X, \expl, \leq, R, A)$ be a descriptive frame
  and let $V$ be an admissible valuation for $\mo{D}$.
  Define the valuation $\ccc{V}$ for $\ccc{\mo{D}}$ by
  $\ccc{V}(p) = V(p) \cap \ccc{X}$.
  Then for any formula $\phi$ we have $\ccc{V}(\phi) = \ccc{V(\phi)}$.
\end{lemma}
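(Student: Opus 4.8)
The plan is to prove the identity by induction on the structure of $\phi$, with the entire semantic content supplied by Claim~1 of Lemma~\ref{lem:des-semdes}. That claim established that restriction to $\ccc{X}$ commutes with every \CK-algebra operation, i.e.
\begin{equation*}
  \ccc{a} \cap \ccc{b} = \ccc{a \cap b}, \quad
  \ccc{a} \cup \ccc{b} = \ccc{a \cup b}, \quad
  \ccc{a} \To \ccc{b} = \ccc{a \To b}, \quad
  \ccc{\dbox a} = \dbox \ccc{a}, \quad
  \ccc{\ddiamond a} = \ddiamond \ccc{a},
\end{equation*}
where on the left the operations $\To, \dbox, \ddiamond$ are those of $\ccc{\mo{D}}$ (computed via $\cleq$ and $\cR$) and on the right those of $\mo{D}$. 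Since $V$ is admissible and $A$ is closed under all operations, every $V(\phi)$ lies in $A$, so these identities apply with $a = V(\phi)$ and $b = V(\psi)$; note in particular that $\ccc{V}(p) = V(p) \cap \ccc{X} = \ccc{V(p)} \in \ccc{A}$ contains $\expl$, so $\ccc{V}$ is a genuine admissible valuation on $\ccc{\mo{D}}$.

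First I would dispatch the base cases. For a proposition letter $p$ the claim $\ccc{V}(p) = \ccc{V(p)}$ holds by the very definition of $\ccc{V}$. For $\bot$ we have $\ccc{V}(\bot) = \{ \expl \}$, and since $\expl \in \ccc{X}$ this equals $\{ \expl \} \cap \ccc{X} = \ccc{V(\bot)}$.

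For the inductive step I would treat each connective uniformly: expand $\ccc{V}$ of the compound formula using the clauses of Definition~\ref{def:modsem} interpreted \emph{in $\ccc{\mo{D}}$}, apply the induction hypothesis to the immediate subformulas, and then invoke the matching identity above. For instance, in the implication case
\begin{equation*}
  \ccc{V}(\phi \to \psi)
    = \ccc{V}(\phi) \To \ccc{V}(\psi)
    = \ccc{V(\phi)} \To \ccc{V(\psi)}
    = \ccc{V(\phi) \To V(\psi)}
    = \ccc{V(\phi \to \psi)},
\end{equation*}
and the cases for $\wedge, \vee, \Box, \Diamond$ are identical, using the corresponding identity for $\cap, \cup, \dbox, \ddiamond$ respectively.

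The only thing to be careful about — and it is bookkeeping rather than a genuine obstacle — is to keep track of \emph{which} frame each operation is computed in: the recursive clauses defining $\ccc{V}$ must be read as using the relations $\cleq$ and $\cR$ of $\ccc{\mo{D}}$, so that the left-hand operations in the displayed identities are exactly the ones appearing, before one transfers them across to $\mo{D}$ via Lemma~\ref{lem:des-semdes}. All the real work has already been done there, so the induction goes through routinely.
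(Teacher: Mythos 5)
Your proposal is correct and follows essentially the same route as the paper: induction on $\phi$, base cases by definition, and inductive cases discharged by the commutation identities $\ccc{a \To b} = \ccc{a} \To \ccc{b}$, $\ccc{\dbox a} = \dbox\ccc{a}$, $\ccc{\ddiamond a} = \ddiamond\ccc{a}$ established in Claim~1 of Lemma~\ref{lem:des-semdes}, applied to $a = V(\phi) \in A$ (admissibility of $V$). Your explicit remarks on which frame each operation is computed in, and on why $V(\phi)$ lies in $A$, are sound and if anything slightly more careful than the paper's own one-case sketch.
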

\begin{proof}
    This follows from a straightforward induction on the structure of $\phi$.
    While the base case holds by definition, for the inductive cases 
    we use that $x \in a$ iff $\ccc{x} \in \ccc{a}$ for all $x \in X$ and $a \in A$.
    We give the cases of $\Diamond$.
    
   From $x\in\ccc{V}(\Diamond\phi)$ we get $x\in\ddiamond\ccc{V}(\phi)$ by definition. By induction hypothesis we 
   have $x\in\ddiamond\ccc{V(\phi)}$, hence $x\in\ccc{\ddiamond V(\phi)}$ via the equality 
   $\ddiamond\ccc{a}=\ccc{\ddiamond a}$ obtained as the one for $\ddiamond$ above.
   As all transformations we performed are equivalences, we established $\ccc{V}(\Diamond\phi)=\ccc{\ddiamond V(\phi)}$.
\end{proof}
  
  It follows from Lemma~\ref{lem:ccc-D} that a descriptive frame $\mo{D}$
  and its pruning $\ccc{\mo{D}}$ (in)validate precisely the same formulas.
  In what follows, we prove that validity of Sahlqvist formulas is preserved
  when moving from a semi-descriptive frame $\mo{G}$
  to its underlying \CK-frame $\kappa\mo{G}$.
  Combining these, we conclude the completeness of extensions of \CK with
  Sahlqvist formulas, with respect to the adequate class of frames:
  the validity of Sahlqvist formulas can travel all the way from the
  Lindenbaum-Tarski algebra $\LT{}$ to the frame $\kappa\ccc{\LT{}_{*}}$
  generated via duality and pruning.

\begin{definition}
  If $\mo{G} = (X, \expl, \leq, R, A)$ is a general \CK-frame,
  then we write $\kappa\mo{G} = (X, \expl, \leq, R)$ for the underlying
  \CK-frame.
  In particular, if $\mo{D}$ is a descriptive frame then
  $\kappa\overline{\mo{D}}$ denotes the \CK-frame underlying the pruned
  general frame $\overline{\mo{D}}$.
\end{definition}

\begin{definition}
  A formula $\phi$ is called \emph{pruning persistent} or \emph{p-persistent}
  if for every descriptive
  \CK-frame $\mo{D}$,
  \begin{equation*}
    \mo{D} \Vdash \phi
    \quad\text{implies}\quad
    \kappa\overline{\mo{D}} \Vdash \phi.
  \end{equation*}
\end{definition}

\begin{definition}
  Let $\mo{D} = (X, \expl, \leq, R, A)$ be a general \CK-frame.
  We call a (not-necessarily admissible)
  valuation $V$ of $\mo{D}$
  \emph{closed} if $V(p) = \bigcap \{ a \in A \mid V(p) \subseteq a \}$
  for all $p \in \Prop$.
  Furthermore, we write $V \lessdot U$ if $U$ is an admissible valuation
  for $\mo{D}$ such that $V(p) \subseteq U(p)$ for all $p \in \Prop$.
\end{definition}

\begin{lemma}\label{lem:closval}
  Let $\mo{G} = (X, \expl, \leq, R, A)$ be a semi-descriptive \CK-frame
  and let $V$ be a closed valuation of the proposition letters.
  Then for any positive formula $\phi$ we have
  \begin{equation*}
    V(\phi) = \bigcap_{V \lessdot U} U(\phi).
  \end{equation*}
\end{lemma}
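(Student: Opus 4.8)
The plan is to prove the two inclusions separately. The inclusion $V(\phi) \subseteq \bigcap_{V \lessdot U} U(\phi)$ holds uniformly for every positive $\phi$ and needs no induction: whenever $V \lessdot U$ we have $V(p) \subseteq U(p)$ for all $p$, so upward monotonicity of positive formulas (Lemma~\ref{lem:polmonot}) gives $V(\phi) \subseteq U(\phi)$, and intersecting over all such $U$ yields the claim. The content of the lemma is therefore the reverse inclusion $\bigcap_{V \lessdot U} U(\phi) \subseteq V(\phi)$, which I would establish by induction on the structure of the positive formula $\phi$, taking as induction hypothesis the full equality for immediate subformulas. Throughout I would use that $A$ is closed under all operations, so that $U(\psi) \in A$, hence clopen in $\tau_A$, for every admissible $U$ and every formula $\psi$.

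For the base cases, $\top$ and $\bot$ are immediate since every $U$ agrees with $V$ on them. For a proposition letter $p$ I would use closedness of $V$: for each $a \in A$ with $V(p) \subseteq a$, the valuation sending $p$ to $a$ and every other letter to $X$ is admissible and lies above $V$, so $\bigcap_{V \lessdot U} U(p) \subseteq \bigcap \{ a \in A \mid V(p) \subseteq a \} = V(p)$. The conjunction case is immediate, since intersection distributes over intersection and then the hypothesis applies. For disjunction, given $x \in \bigcap_{V \lessdot U}(U(\psi) \cup U(\chi))$ with $x \notin V(\psi) \cup V(\chi)$, the hypothesis produces admissible $U_1, U_2$ above $V$ with $x \notin U_1(\psi)$ and $x \notin U_2(\chi)$; the valuation $U$ with $U(p) = U_1(p) \cap U_2(p)$ is again admissible and above $V$ by closure of $A$ under binary intersection, and monotonicity forces $x \notin U(\psi)$ and $x \notin U(\chi)$, contradicting $x \in U(\psi) \cup U(\chi)$.

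The two modal cases split sharply, and the diamond case is the main obstacle. The box case is easy: if $x \in \bigcap_{V \lessdot U} \dbox U(\psi)$ and $x \leq y R z$, then $z \in U(\psi)$ for every admissible $U \supseteq V$, so $z \in \bigcap_{V \lessdot U} U(\psi) = V(\psi)$ by the hypothesis, whence $x \in \dbox V(\psi)$; here the quantifier over successors is universal and simply commutes with the intersection. For the diamond case the existential witness can depend on $U$, so I would instead fix $y \geq x$ and consider, for each admissible $U \supseteq V$, the set $R[y] \cap U(\psi)$, which is nonempty by hypothesis. Each such set is closed, since $R[y]$ is closed by~\eqref{it:D2p} and $U(\psi)$ is clopen. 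This family has the finite intersection property: for $U_1, \dots, U_n$ the pointwise intersection $U$ is admissible and above $V$, and monotonicity gives $R[y] \cap U(\psi) \subseteq \bigcap_i (R[y] \cap U_i(\psi))$ with the left side nonempty. By compactness~\eqref{it:D3} the full intersection $R[y] \cap \bigcap_{V \lessdot U} U(\psi) = R[y] \cap V(\psi)$ is nonempty, yielding a single $z$ with $y R z$ and $z \in V(\psi)$; as $y \geq x$ was arbitrary this gives $x \in \ddiamond V(\psi)$. The crux is thus the interplay in the diamond case between~\eqref{it:D2p}, which makes each $R[y]$ closed, and compactness~\eqref{it:D3} — precisely the semi-descriptive properties that the pruning construction was designed to secure.
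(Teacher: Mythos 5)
Your proof is correct and follows essentially the same route as the paper's: induction on $\phi$, with the $\Diamond$ case resolved by combining closedness of $R[y]$ from~\eqref{it:D2p} with compactness~\eqref{it:D3} and pointwise intersections of finitely many admissible valuations. The only differences are presentational — you argue the diamond case directly via the finite intersection property where the paper argues contrapositively, and you spell out the atomic and disjunction cases that the paper delegates to the definition of closedness and to the classical reference.
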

\begin{proof}
  We use induction on the structure of $\phi$.
  If $\phi$ is a proposition letter then the lemma follows immediately from
  the definition of a closed valuation, and if $\phi = \bot$ then it follows
  from the fact that $\bot$ is always interpreted as $\{ \expl \}$.
  The cases for conjunction and disjunction proceed as in~\cite[Theorem~5.91]{BRV01}.
  We consider the remaining cases.

  \medskip\noindent
  \textit{Case $\phi = \Box\psi$.}
    In this case, we have
    \begin{equation*}
      V(\Box\psi)
        = \{ x \in X \mid R[{\uparrow}x] \subseteq V(\psi) \}
        = \bigcap_{V \lessdot U} \{ x \in X \mid R[{\uparrow}x] \subseteq U(\psi) \}
        = \bigcap_{V \lessdot U} U(\Box\psi).
    \end{equation*}

  \medskip\noindent
  \textit{Case $\phi = \Diamond\psi$.}
    It follows from the fact that $\psi$ is positive that $V \lessdot U$ implies
    $V(\Diamond\psi) \subseteq U(\Diamond\psi)$,
    so $V(\Diamond\psi) \subseteq \bigcap_{V \lessdot U} U(\Diamond\psi)$.
    For the converse inclusion, suppose $x \notin V(\Diamond\psi)$.
    Then there exists some $y \geq x$ such that $R[y] \cap V(\Diamond\psi) = \emptyset$.
    Using the induction hypothesis and the fact that $\mo{G}$ is semi-descriptive,
    we find
    \begin{equation*}
      \underbrace{%
        \bigcap \{ a \in A \mid R[y] \subseteq a \} 
        \cap \bigcap \{ -b \in -A \mid R[y] \subseteq -b \}}_{= R[y]}
      \cap
        \underbrace{\bigcap \{ U(\psi) \in A \mid V \lessdot U \}}_{=V(\psi)}
      = \emptyset.
    \end{equation*}
    Since $\ccc{\mo{G}}$ is compact (by Lemma~\ref{lem:des-semdes}),
    we can find a finite number $U_1, \ldots, U_n$ of admissible valuations
    such that $R[y] \cap U_1(\psi) \cap \cdots \cap U_n(\psi) = \emptyset$.
    Now define admissible valuation $U'$ by $U'(p) = U_1(p) \cap \cdots \cap U_n(p)$
    for all $p \in \Prop$. Then $V \lessdot U'$,
    and by construction $x \notin U'(\Diamond\psi)$.
    Therefore $x \notin \bigcap_{V \lessdot U} U(\Diamond\psi)$.
    This proves $\bigcap_{V \lessdot U} U(\Diamond\psi) \subseteq V(\psi)$.
\end{proof}

\begin{theorem}\label{thm:sahlcan}
  Every Sahlqvist formula is p-persistent.
\end{theorem}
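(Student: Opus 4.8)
The plan is to follow the classical Sahlqvist canonicity argument via minimal valuations, with the pruning construction of Lemma~\ref{lem:des-semdes} playing the role that prime-filter (or ultrafilter) extensions play classically. First I would reduce to semi-descriptive frames. Given a descriptive frame $\mo{D}$ with $\mo{D} \Vdash \phi$, Lemmas~\ref{lem:des-semdes} and~\ref{lem:ccc-D} tell us that the pruning $\ccc{\mo{D}}$ is semi-descriptive and validates exactly the same formulas, so $\ccc{\mo{D}} \Vdash \phi$. Since $\kappa\ccc{\mo{D}}$ is precisely the \CK-frame whose validity we must establish, it suffices to prove the following for an arbitrary \emph{semi-descriptive} frame $\mo{G} = (X, \expl, \leq, R, A)$: if $\mo{G} \Vdash \psi \to \chi$ (validity under admissible valuations) then $\kappa\mo{G} \Vdash \psi \to \chi$ (validity under all valuations), where $\psi$ is a Sahlqvist antecedent and $\chi$ is positive.

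I would argue by contradiction. Suppose there is an arbitrary valuation $W$ of $\kappa\mo{G}$ and a world $x$ with $(\kappa\mo{G}, W), x \Vdash \psi$ and $(\kappa\mo{G}, W), x \not\Vdash \chi$. Writing $\psi$ by distributivity as a disjunction $\bigvee_i \psi_i$ of conjunctions of boxed atoms, some disjunct $\psi_i$ holds at $x$. I then define the \emph{minimal valuation} $V$ making $\psi_i$ true at $x$: for each proposition letter $p$, set $V(p) = \{\expl\} \cup \bigcup \{ R_{\Box}^{\ell}[x] \mid \Box^{\ell}p \text{ is a conjunct of } \psi_i \}$, where $R_{\Box}^{\ell}[x]$ denotes the set of worlds reachable from $x$ by the relation $xR_{\Box}^{\ell}y$ introduced in Theorem~\ref{thm:Sahl}. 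By construction $(\kappa\mo{G}, V), x \Vdash \psi_i$, hence $x \Vdash \psi$; and since $(\kappa\mo{G}, W), x \Vdash \Box^{\ell}p$ forces $R_{\Box}^{\ell}[x] \subseteq W(p)$, we obtain minimality $V(p) \subseteq W(p)$ for all $p$.

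The crux, and the step where semi-descriptiveness is essential, is showing that $V$ is a \emph{closed} valuation. I would prove by induction on $\ell$ that each $R_{\Box}^{\ell}[x]$ is a closed upset: the base case $R_{\Box}^{0}[x] = {\uparrow}x$ is closed by items~1 and~2 of Lemma~\ref{lem:semdes-top}, and the inductive step, which forms ${\uparrow}R[\,\cdot\,]$ of the previous stage, uses items~3 and~2 of the same lemma. A finite union of closed upsets is again a closed upset, and for a closed upset $C$ a compactness argument (exactly as in the proof of Lemma~\ref{lem:semdes-top}(2), using~\eqref{it:D1} and closure of $A$ under finite unions) gives $C = \bigcap \{ a \in A \mid C \subseteq a \}$. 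Thus $V$ is closed in the sense required by Lemma~\ref{lem:closval}.

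Finally I would close the loop. Since $\chi$ is positive it is upward monotone (Lemma~\ref{lem:polmonot}), so $V(p) \subseteq W(p)$ and $(\kappa\mo{G}, W), x \not\Vdash \chi$ give $(\kappa\mo{G}, V), x \not\Vdash \chi$, i.e.~$x \notin V(\chi)$. As $V$ is closed and $\chi$ positive, Lemma~\ref{lem:closval} yields $V(\chi) = \bigcap_{V \lessdot U} U(\chi)$; since $x \notin V(\chi)$ there is an \emph{admissible} valuation $U$ with $V \lessdot U$ and $x \notin U(\chi)$. The antecedent $\psi$ is itself positive (built from boxed atoms using $\wedge$ and $\vee$), hence upward monotone, so $V(p) \subseteq U(p)$ together with $(\kappa\mo{G}, V), x \Vdash \psi$ give $(\mo{G}, U), x \Vdash \psi$. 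With $U$ admissible we conclude $x \notin U(\psi \to \chi)$, so $\mo{G} \not\Vdash \psi \to \chi$, contradicting the hypothesis. The main obstacle is the closedness of the minimal valuation, which is exactly what the semi-descriptive conditions~\eqref{it:D2p} and~\eqref{it:D4p} (via Lemma~\ref{lem:semdes-top}) are engineered to supply; the rest is the monotonicity bookkeeping licensed by the positivity requirements in the definition of a Sahlqvist formula.
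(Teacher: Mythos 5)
Your proposal is correct and follows essentially the same route as the paper: prune the descriptive frame to a semi-descriptive one via Lemmas~\ref{lem:des-semdes} and~\ref{lem:ccc-D}, build the minimal valuation from the boxed atoms of a disjunct of the antecedent, establish its closedness via Lemma~\ref{lem:semdes-top}, and then combine Lemma~\ref{lem:closval} with the upward monotonicity of positive formulas to pass to an admissible valuation refuting the implication. The only (cosmetic) differences are that you argue by contradiction directly in the modal semantics rather than through the second-order translation $\ISUP\wedge\BOXAT\to\POS$, and you spell out more explicitly than the paper the step from ``topologically closed upset'' to ``closed valuation'' via compactness, \eqref{it:D1} and closure of $A$ under finite unions.
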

\begin{proof}
  Let $\phi = \psi \to \chi$ be a Sahlqvist formula and $\mo{D}$ a descriptive
  \CK-frame such that $\mo{D} \Vdash \phi$.
  Then by Lemma~\ref{lem:ccc-D}, $\ccc{\mo{D}} \Vdash \phi$.
  We prove that $\kappa\ccc{\mo{D}} \Vdash \phi$ by proving that
  $\ccc{\mo{D}}$ satisfies the Sahlqvist correspondent of $\phi$, which,
  after steps 2 and 3 of the proof of Theorem~\ref{thm:Sahl}, 
  is equivalent to
  
  $$\ccc{\mo{D}}^\circ\models\forall P_1\dots\forall P_n\forall x(\ISUP\land\BOXAT\to\POS)$$
  
  \noindent Noting that $x$ does not appear free in $\ISUP$, to establish our result we need to pick arbitrary interpretations of all $P_i$
  satisfying $\isup(P_i)$.
  Additionally, the formula $\forall x(\ISUP\land\BOXAT\to\POS)$ is insensitive to the interpretation
  of second-order predicates not present in it.
  Consequently, without loss of generality we can restrict our attention to the interpretation of all predicates satisfying $\isup$,
  that is \emph{valuations}.
  This reduces our goal to

  $$\forall V.\;\;(\ccc{\mo{D}},V)^\circ\models\forall x(\BOXAT\to\POS)$$
  
  \noindent Let us therefore pick a world $w$ of $\ccc{\mo{D}}$, and consider the minimal valuation
  defined below.
  
  $$V_m(p):=\bigcup\{
  	\ccc{R}_\Box^{\ell}[w]   \mid
	\text{the formula }
	\forall y(w\ccc{R}_{\Box}^{\ell}y \to Py)	\text{ appears in }\BOXAT
	\}$$
          
  \noindent Next, we note the equivalence between 
  $\forall V.\;\;(\ccc{\mo{D}},V)^\circ\models\BOXAT\to\POS[w]$ and
  
  $$(\ccc{\mo{D}},V_m)^\circ\models\BOXAT\to\POS [w]$$
  
  \noindent One direction of the equivalence is immediate: instantiate $V$ by $V_m$.
  For the other direction, recall that $V_m$ is a minimal valuation for $\BOXAT[w]$, which
  tells us of any valuation satisfying this formula that it is an extension of $V_m$.
  Now observe that $(\ccc{\mo{D}},V_m)^\circ\models\BOXAT[w]$ holds
  by construction of $V_m$, which entails that $(\ccc{\mo{D}},V_m)^\circ\models\POS[w]$
  by assumption.
  It suffices to use together the positivity of $\POS[w]$, this last fact, and Lemma~\ref{lem:polmonot} to get
  $(\ccc{\mo{D}},V)^\circ\models\POS[w]$.
  
  With this equivalence and the observation $(\ccc{\mo{D}},V_m)^\circ\models\BOXAT[w]$,
  we reduce our goal to 

  $$\;\;(\ccc{\mo{D}},V_m)^\circ\models\POS [w]$$
  
  \noindent To establish this fact, we leverage an important property of our minimal valuation:
  $V_m$ is closed.
  By applying finitely many times the items of Lemma~\ref{lem:semdes-top}, we can reach this conclusion.
  Indeed, item 1 informs us that singleton set are closed,
  the upset of a singleton set is also closed by item 2, 
  and the union of the sets of modal successors of these upsets is also closed by item 3.
  By finitely repeating this process, we can show that $V_m$ is closed, given
  that it is characterised by finite paths in $\ccc{R}_\Box^{\ell}[w]$.
  
  As it is closed, we can apply Lemma~\ref{lem:closval} on $V_m$ to put our goal to its final form:
  
  $$\forall U.\;V_m\lessdot U\;\to\;(\ccc{\mo{D}},U)^\circ\models\POS [w]$$
  
  Let $U$ be a valuation such that $V_m\lessdot U$, making $U$ an \emph{admissible} valuation extending $V_m$.
  Therefore, we obtain $(\ccc{\mo{D}},U)^\circ\models\BOXAT[w]$.
  Given that $\BOXAT$ is the \emph{first-order} correspondent of $\psi$,
  we straightforwardly get $(\ccc{\mo{D}},U),w\Vdash\psi$.
  Our initial assumption  $\ccc{\mo{D}}\Vdash\psi\to\chi$ therefore leads to $(\ccc{\mo{D}},U),w\Vdash\chi$.
  We finish our proof by exploiting once more the first-order correspondence to get $(\ccc{\mo{D}},U)^\circ\Vdash\POS[w]$.
\end{proof}

\begin{theorem}
For any set $\Lambda$ of Sahlqvist formulas,
the logic $\log{CK}\oplus\Lambda$ is complete w.r.t.
the class of frames characterised by the first-order frame correspondents generated from  $\Lambda$.
\end{theorem}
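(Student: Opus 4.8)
Write $\ms{F}_\Lambda$ for the class of \CK-frames satisfying the first-order correspondents of the formulas in $\Lambda$ produced by Theorem~\ref{thm:Sahl}; the plan is to prove soundness and completeness with respect to $\ms{F}_\Lambda$ separately. Soundness is routine: the theorems of \CK\ are valid on every \CK-frame, and by Theorem~\ref{thm:Sahl} each $\psi \in \Lambda$ is valid on exactly those frames that satisfy its first-order correspondent, hence on all of $\ms{F}_\Lambda$. As frame validity is preserved by uniform substitution and the deduction rules of $\log{CK}$, every theorem of $\log{CK}\oplus\Lambda$ is valid on $\ms{F}_\Lambda$.

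For completeness I argue contrapositively: assuming $\phi$ is not a theorem of $\log{CK}\oplus\Lambda$, I produce a frame in $\ms{F}_\Lambda$ that refutes it. By completeness with respect to descriptive frames (the theorem following the duality) there is a descriptive \CK-frame $\mo{D}$ with $\mo{D} \Vdash \psi$ for all $\psi \in \Lambda$ but $\mo{D} \not\Vdash \phi$; concretely one may take $\mo{D} = \LT{\Lambda}_*$, the dual of the Lindenbaum-Tarski algebra, since $\phi$ failing to be a theorem gives $\LT{\Lambda} \not\models \phi$ by Lemma~\ref{lem:algtrulem}, which transports across the isomorphism $\LT{\Lambda} \cong (\LT{\Lambda}_*)^*$ of the duality, while every $\psi \in \Lambda$ is a theorem and hence valid in $\LT{\Lambda}$, so that $\LT{\Lambda}_* \Vdash \psi$.

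Now I push these facts down to an ordinary \CK-frame. For each Sahlqvist formula $\psi \in \Lambda$, p-persistence (Theorem~\ref{thm:sahlcan}) turns $\mo{D} \Vdash \psi$ into $\kappa\ccc{\mo{D}} \Vdash \psi$; by Theorem~\ref{thm:Sahl} this says precisely that $\kappa\ccc{\mo{D}}$ satisfies the first-order correspondent of $\psi$, and since this holds for every $\psi \in \Lambda$ we conclude $\kappa\ccc{\mo{D}} \in \ms{F}_\Lambda$.

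It remains to transfer the refutation of $\phi$ to this frame, the one step requiring a moment's care, though it is easy in this direction. Since $\mo{D} \not\Vdash \phi$, Lemma~\ref{lem:ccc-D} gives $\ccc{\mo{D}} \not\Vdash \phi$, i.e.~some \emph{admissible} valuation falsifies $\phi$ at a world of the semi-descriptive frame $\ccc{\mo{D}}$; but this admissible valuation is in particular a valuation by upsets containing $\expl$ on the underlying frame $\kappa\ccc{\mo{D}}$, and satisfaction depends only on the frame and the valuation, so the same valuation witnesses $\kappa\ccc{\mo{D}} \not\Vdash \phi$. Hence $\kappa\ccc{\mo{D}} \in \ms{F}_\Lambda$ refutes $\phi$, so $\phi$ is not valid on $\ms{F}_\Lambda$, as required. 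The substantive difficulty of this theorem has already been absorbed into the preceding results, above all into the pruning construction behind p-persistence (Theorem~\ref{thm:sahlcan}), which replaces the classical d-persistence step that fails in the \CK-setting; what remains here is assembly, the only delicate point being the final transfer across $\kappa$, which succeeds because forgetting admissibility only enlarges the stock of valuations and therefore preserves refutations.
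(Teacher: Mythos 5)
Your proof is correct and follows essentially the same route as the paper: refute $\phi$ in the Lindenbaum--Tarski algebra, pass to its dual descriptive frame, prune via Lemma~\ref{lem:ccc-D}, forget admissibility, and use p-persistence (Theorem~\ref{thm:sahlcan}) together with Sahlqvist correspondence (Theorem~\ref{thm:Sahl}) to place $\kappa\ccc{\LT{\Lambda}_*}$ in the right frame class. The only addition is your explicit soundness paragraph, which the paper leaves implicit.
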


\begin{proof}
Assume $\not\vdash_{\Lambda}\phi$ for $\phi\in\Formulas$.
The proof of Theorem~\ref{thm:pdt} informs us that $\top\not\leq\int{\val}_{\scriptscriptstyle\LT{\Lambda}}(\phi)$.
Therefore, we get that $\LT{\Lambda}_{*}\not\Vdash\phi$,
so Lemma~\ref{lem:ccc-D} yields $\ccc{\LT{\Lambda}_{*}}\not\Vdash\phi$.
Since every admissible valuation is in particular a valuation, this implies
$\kappa\ccc{\LT{\Lambda}_{*}}\not\Vdash\phi$.

Now, it suffices to show that $\kappa\ccc{\LT{\Lambda}_{*}}$
is in the adequate class of frames.
For any $\lambda\in\Lambda$ we have $\top=\int{\val}_{\scriptscriptstyle\LT{\Lambda}}(\lambda)$ and
hence  $\LT{\Lambda}_{*}\Vdash\lambda$.
Any such $\lambda$ is a Sahlqvist implication and therefore p-persistent by Theorem~\ref{thm:sahlcan}, 
giving us $\kappa\ccc{\LT{\Lambda}_{*}}\Vdash\lambda$.
By Theorem~\ref{thm:Sahl} the first-order correspondent of $\lambda$ holds of $\kappa\ccc{\LT{\Lambda}_{*}}$.
As $\lambda$ is arbitrary, we get that $\kappa\ccc{\LT{\Lambda}_{*}}$ is indeed in the
class of frames characterised by the first-order frame correspondents generated from  $\Lambda$,
allowing us to conclude completeness.
\end{proof}

\section{Goldblatt-Thomason theorem}\label{Sec:GT}

  In this section we use the duality between descriptive \CK-frames and
  \CK-algebras to obtain a definability theorem akin to Goldblatt and Thomason's
  definability theorem from~\cite{GolTho74}.
  This gives sufficient conditions for a class of \CK-frames to be axiomatic.
  More precisely, it states that a class of \CK-frames that is closed under
  so-called \emph{segment extensions} is axiomatic if and only if it reflects
  segment extensions and is closed under disjoint unions, generated subframes
  and bounded morphic images.
  The proof is obtained by dualising Birkhoff's variety theorem,
  using the segment extensions as a bridge between frames and algebras.
  
  The Goldblatt-Thomason theorem for classical modal logics usually makes use
  of some form of \emph{ultrafilter extension} of a frame to bridge the gap
  between algebras and frames
  (see e.g.~\cite[Definition~2.57]{BRV01}, \cite[Definition~4.34]{Han03}
  and~\cite[Section~3.2]{KurRos07}).
  When working with positive modal logics or intuitionistic modal logics, this is
  often replaced by the \emph{prime filter extension} of a frame
  (such as in~\cite[Section~5]{CelJan01}, \cite[Section~6]{Gol05} and~\cite{Gro22gt}).
  In either case, this extension of a frame $\mo{X}$ is obtained as the double
  dual of a frame, that is, as the frame underlying the descriptive frame dual
  to the complex algebra of $\mo{X}$.
  When we employ the same method, we do not end up with a frame based on the
  collection of prime filters of $\mo{X}$, but rather it is based on the
  set of \emph{segments}. Accordingly, we define the \emph{segment extension}
  of a \CK-frame as follows.

\begin{definition}
  The \emph{segment extension} of a \CK-frame $\mo{X}$ is defined as
  \begin{equation*}
    \seg\mo{X} = (\mo{X}^+)_+.
  \end{equation*}
\end{definition}

  Concretely, $\seg\mo{X}$ consists of the collection of segments of the
  complex algebra $\mo{X}^+$ of $\mo{X}$,
  with $\expl^{\seg} = (\upp(\mo{X}), \{ \upp(\mo{X}) \})$
  and relations given by
  \begin{align*}
    (\ff{p}, \Gamma) \subsetsim (\ff{q}, \Delta) &\iff \ff{p} \subseteq \ff{q} \\
    (\ff{p}, \Gamma) R^{\seg} (\ff{q}, \Delta) &\iff \ff{q} \in \Gamma
  \end{align*}

\begin{definition}
  The \emph{segment extension} of a \CK-model $\mo{M}$ is given by
  \begin{equation*}
    \seg\mo{M} = (\seg\mo{X}, V^{\seg}),
  \end{equation*}
  where $V^{\seg}(q) = \{ (\ff{p}, \Gamma) \in X^{\seg} \mid V(q) \in \ff{p} \}$.
\end{definition}

  \CK-frames and -models are closely related to their segment extensions:

\begin{lemma}\label{lem:seg-ext-truth}
  Let $\mo{X} = (X, \expl, \leq, R)$ be a \CK-frame and
  $\mo{M} = (\mo{X}, V)$ a \CK-model.
    \begin{enumerate}
    \item For all segments $(\ff{p}, \Gamma) \in X^{\seg}$ we have
          $\seg\mo{M}, (\ff{p}, \Gamma) \Vdash \phi$ iff $V(\phi) \in \ff{p}$.
    \item For all worlds $x \in X$ we have
          $\mo{M}, x \Vdash \phi$ iff $\seg\mo{M}, \Eta(x) \Vdash \phi$.
    \item If $\seg\mo{X} \Vdash \phi$ then $\mo{X} \Vdash \phi$.
  \end{enumerate}
\end{lemma}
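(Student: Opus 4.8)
The plan is to prove the three items in order; item~(1) is the crux and items~(2) and~(3) follow quickly from it. For item~(1) I would recognise the statement as a truth lemma and reduce it to the single identity
\begin{equation*}
  V^{\seg}(\phi) = \Thet(V(\phi)) \quad \text{for all } \phi \in \Formulas,
\end{equation*}
where $V : \Formulas \to \mo{X}^+$ is the extension of the valuation to all formulas and $\Thet$ is the canonical map of the algebra $\mo{X}^+$. Granting the identity, satisfaction at a segment unwinds immediately: $\seg\mo{M}, (\ff{p}, \Gamma) \Vdash \phi$ iff $(\ff{p}, \Gamma) \in V^{\seg}(\phi) = \Thet(V(\phi))$, and by the definition of $\Thet$ the latter holds iff $V(\phi) \in \ff{p}$, as required.

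It remains to prove the identity, which I would do by induction on $\phi$. The base case $\phi = p$ is the definition of $V^{\seg}$, and the case $\phi = \bot$ uses $V^{\seg}(\bot) = \{ \expl^{\seg} \} = \Thet(\bot)$. The inductive cases are precisely the assertions that $\Thet$ commutes with the connectives: conjunction and disjunction use $\Thet(a \wedge b) = \Thet(a) \cap \Thet(b)$ and $\Thet(a \vee b) = \Thet(a) \cup \Thet(b)$, while $\to$, $\Box$ and $\Diamond$ are exactly the three clauses of Lemma~\ref{lem:A}. Conceptually, the identity simply records that $\Thet$, a $\CK$-algebra homomorphism $\mo{X}^+ \to (\seg\mo{X})^+$, carries the interpretation of the valuation $p \mapsto V(p)$ in $\mo{X}^+$ to the interpretation of $p \mapsto V^{\seg}(p) = \Thet(V(p))$ in $(\seg\mo{X})^+$; the only point requiring care is to match each operator of $\mo{X}^+$ with the corresponding operator of its double dual $(\seg\mo{X})^+$.

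For item~(2) I would specialise item~(1) to the segment $\Eta(x) = (\eta(x), \eta[R[x]])$: by item~(1), $\seg\mo{M}, \Eta(x) \Vdash \phi$ iff $V(\phi) \in \eta(x)$, and since $\eta(x) = \{ a \in \mo{X}^+ \mid x \in a \}$ this membership is by definition $x \in V(\phi)$, i.e.\ $\mo{M}, x \Vdash \phi$. For item~(3), suppose $\seg\mo{X} \Vdash \phi$ and let $V$ be an arbitrary valuation on $\mo{X}$ with $\mo{M} = (\mo{X}, V)$. Then $V^{\seg}$ is a valuation on $\seg\mo{X}$, so $\seg\mo{M} = (\seg\mo{X}, V^{\seg}) \Vdash \phi$; in particular $\seg\mo{M}, \Eta(x) \Vdash \phi$ for every $x \in X$, whence item~(2) gives $\mo{M}, x \Vdash \phi$ for every $x$, i.e.\ $\mo{M} \Vdash \phi$. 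As $V$ was arbitrary, $\mo{X} \Vdash \phi$. The main obstacle is thus the modal inductive step of item~(1), but that work is already done --- it is exactly the box and diamond clauses of Lemma~\ref{lem:A} --- so the remaining effort is only in assembling the induction and in the bookkeeping of items~(2) and~(3).
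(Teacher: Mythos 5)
Your proposal is correct and follows essentially the same route as the paper: both reduce item~(1) to the identity $V^{\seg}(\phi) = \Thet(V(\phi))$, prove it by induction with the $\to$, $\Box$ and $\Diamond$ cases discharged by Lemma~\ref{lem:A}, and then derive items~(2) and~(3) by exactly the specialisation to $\Eta(x)$ and the arbitrary-valuation argument you describe.
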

\begin{proof}
  Since $\Thet(V(\phi)) = \{ (\ff{p}, \Gamma) \in X^{\seg} \mid V(\phi) \in \ff{p} \}$, the first item is equivalent to
  \begin{equation*}
    V^{\seg}(\phi) = \Thet(V(\phi)).
  \end{equation*}
  We prove this by induction on the structure of $\phi$.
  If $\phi = \bot$ then
  \begin{equation*}
    V^{\seg}(\bot)
      = \{ \expl^{\seg} \}
      = \Thet( \{ \expl \})
      = \Thet( V(\bot)).
  \end{equation*}
  If $\phi \in \Prop$ then the result follows immediately from the definitions
  of $V^{\seg}$ and $\Thet$.
  The inductive cases for $\phi = \psi \wedge \chi$ and $\phi = \psi \vee \chi$
  are routine, and
  the cases for implication, box and diamond follow immediately from Lemma~\ref{lem:A}.
  For example, if $\phi = \Box\psi$ then we find
  \begin{align*}
    \Thet(V(\Box\psi))
      &= \Thet(\dbox V(\psi))
      &\text{(by definition of $\dbox$)} \\
      &= \dbox\Thet(V(\psi))
      &\text{(by Lemma~\ref{lem:A})} \\
      &= V^{\seg}(\Box\psi)
      &\text{(by definition of $\dbox$)}
  \end{align*}

  The second item then follows from the first and the definition of
  $\Eta$ via
  \begin{equation*}
    \mo{M}, x \Vdash \phi
      \iff x \in V(\phi)
      \iff V(\phi) \in \eta(x)
      \iff \seg\mo{M}, \Eta(x) \Vdash \phi.
  \end{equation*}
  For the third item, let $V$ be any valuation for $\mo{X}$ and $x \in X$.
  Then $V^{\seg}$ is a valuation of $\seg\mo{X}$ and by assumption
  $(\seg\mo{X}, V^{\seg}), \Eta(x) \Vdash \phi$, so by (2)
  we get $(\mo{X}, V), x \Vdash \phi$.
  Since $V$ and $x$ are arbitrary, this proves $\mo{X} \Vdash \phi$.
\end{proof}

\begin{lemma}\label{lem:alg-axiomatic-variety}
  $\ms{C} \subseteq \CKAlg$ is axiomatic
  if and only if it is a variety of algebras.
\end{lemma}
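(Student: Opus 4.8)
The plan is to identify axiomatic classes with equationally definable subclasses of $\CKAlg$ and then invoke Birkhoff's variety theorem. First I would record that $\CKAlg$ is itself a variety: its signature is $(\top, \bot, \wedge, \vee, \to, \Box, \Diamond)$, and every defining condition of Definition~\ref{def:CK-alg} is either an equation or an inequality $s \leq t$, the latter being equivalent to the equation $s \wedge t = s$; together with the (equational) Heyting algebra axioms this exhibits $\CKAlg$ as an equational class, hence a variety. Under this signature the formulas of $\CK$ over $\Prop$ are literally the terms with variables drawn from $\Prop$, and for a valuation $v$ the interpretation $\int{v}(\phi)$ is precisely the evaluation of the term $\phi$ under the assignment $v$.

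The crux is a dictionary between formula validity and equations. On one hand, unfolding Definition~\ref{def:alg_satisfy}, we have $\alg{A} \models \phi$ exactly when $\alg{A}$ satisfies the equation $\phi = \top$. On the other hand, in any Heyting algebra an equation $s = t$ holds if and only if $s \leftrightarrow t = \top$, so an arbitrary equation is equivalent to the validity of the formula $s \leftrightarrow t$. Consequently a subclass of $\CKAlg$ is of the form $\Alg\Phi$ for some set of formulas $\Phi$ if and only if it is cut out of $\CKAlg$ by a set of equations: given $\Phi$, use the equations $\{\phi = \top \mid \phi \in \Phi\}$; given equations $\{s_i = t_i\}$, use the formulas $\{s_i \leftrightarrow t_i\}$.

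It then remains to recall Birkhoff's theorem~\cite{Bir35}: a class of algebras of a fixed signature is definable by equations if and only if it is closed under homomorphic images, subalgebras and products, i.e.~is a variety. Since $\CKAlg$ is already closed under these operations, a subclass $\ms{C} \subseteq \CKAlg$ is a variety precisely when it is equationally definable relative to $\CKAlg$. Chaining this with the dictionary of the previous paragraph gives that $\ms{C}$ is axiomatic if and only if it is a variety.

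I do not expect a genuine obstacle here; the result is essentially the standard equivalence between equational logic and modal-algebraic validity, transported to the present signature. The only points demanding care are the clean identification of formulas with algebraic terms, so that $\int{v}(\phi)$ really is term evaluation, and the Heyting-algebra observation $s = t \iff s \leftrightarrow t = \top$ that lets equations be rephrased as formulas; one should also ensure that $\Prop$ supplies enough variables (it suffices that $\Prop$ be infinite) for Birkhoff's theorem to apply.
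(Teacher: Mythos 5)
Your proposal is correct and follows essentially the same route as the paper: both rest on the dictionary identifying validity of a formula $\phi$ with the equation $\phi=\top$ (via substituting variables for proposition letters) and, conversely, an equation $s=t$ with validity of $s\leftrightarrow t$. The only cosmetic difference is that the paper reads ``variety'' directly as ``equationally definable class'' and so never invokes Birkhoff inside this lemma (it does so later, in Lemma~\ref{lem:mdv}), whereas you add the Birkhoff step to connect with the $HSP$ characterisation; this is harmless and does not change the argument.
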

\begin{proof}
  If
  $\ms{C} = \{ \alg{A} \in \CKAlg \mid \alg{A} \models \Phi \}$, 
  then it is precisely the variety of algebras satisfying
  $\phi^x \leftrightarrow \top$, where $\phi \in \Phi$ and $\phi^x$ is
  the formula we get from $\phi$ by replacing the proposition letters with
  variables from some set $S$ of variables.
  Conversely, suppose $\ms{C}$ is a variety of algebras given by a set
  $E$ of equations using variables in $S$. For each equation $\phi = \psi$
  in $E$, let $(\phi \leftrightarrow \psi)^p$ be the formula we get from
  uniformly replacing the variables in $\phi \leftrightarrow \psi$ with proposition
  letters. Then we have
  $\mc{C} = \Alg \{ (\phi \leftrightarrow \psi)^p \mid \phi = \psi \in E \}$.
\end{proof}

  For a class $\ms{K}$ of \CK-frames, write
  $\ms{K}^+ = \{ \mo{X}^+ \mid \mo{X} \in \ms{K} \}$ for the collection
  of corresponding complex algebras.
  Also, if $\ms{C}$ is a class of algebras, then we write
  $H\ms{C}$, $S\ms{C}$ and $P\ms{C}$ for its closure under
  \textbf{h}omomorphic images, \textbf{s}ubalgebras and \textbf{p}roducts,
  respectively.

\begin{lemma}\label{lem:mdv}
  A class $\ms{K} \subseteq \CKFrm$ is axiomatic
  if and only if
  \begin{equation}\label{eq:mdv}
    \ms{K} = \{ \mo{X} \in \CKFrm
                \mid \mo{X}^+ \in HSP(\ms{K}^+) \}.
  \end{equation}
\end{lemma}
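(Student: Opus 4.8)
The plan is to reduce the whole statement to Birkhoff's variety theorem through the complex-algebra functor, using as a dictionary the equivalence
\[
  \mo{X} \Vdash \phi \quad\text{iff}\quad \mo{X}^+ \models \phi .
\]
This is exactly Lemma~\ref{lem:frm-to-alg}, once one reads the empty-premise instance of Definition~\ref{def:ptd} (taking $\Lambda = \Lambda' = \emptyset$) as algebraic satisfaction in the sense of Definition~\ref{def:alg_satisfy}. Given this, a frame class $\ms{K}$ defined by a set $\mathsf{Ax}$ satisfies $\ms{K} = \{ \mo{X} \mid \mo{X}^+ \models \mathsf{Ax} \} = \{ \mo{X} \mid \mo{X}^+ \in \Alg\mathsf{Ax} \}$. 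Here I first have to check that ``satisfies the theorems of $\CK \oplus \mathsf{Ax}$'' (Definition~\ref{def:frm-axiomatic}) coincides with ``satisfies $\mathsf{Ax}$'': the nontrivial inclusion is a soundness claim, which I obtain by noting $\mo{X}^+ \in \Alg\mathsf{Ax}$ and invoking Theorem~\ref{thm:pdt} to conclude that every theorem of $\CK \oplus \mathsf{Ax}$ is satisfied by $\mo{X}^+$, hence valid on $\mo{X}$.

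For the direction assuming the displayed equation~\eqref{eq:mdv}, I would argue as follows. By Birkhoff's variety theorem $HSP(\ms{K}^+)$ is a variety, so by Lemma~\ref{lem:alg-axiomatic-variety} it is an axiomatic class of algebras, say $HSP(\ms{K}^+) = \Alg\Phi$ for some set $\Phi$ of formulas. Translating through the dictionary then gives $\ms{K} = \{ \mo{X} \mid \mo{X}^+ \in \Alg\Phi \} = \{ \mo{X} \mid \mo{X} \Vdash \Phi \}$, which is axiomatic by Definition~\ref{def:frm-axiomatic} (again using the soundness remark above to pass from $\Phi$ to the theorems of $\CK \oplus \Phi$).

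For the converse, suppose $\ms{K}$ is axiomatic, defined by $\mathsf{Ax}$, and write $\ms{K}'$ for the right-hand side of~\eqref{eq:mdv}. The inclusion $\ms{K} \subseteq \ms{K}'$ is immediate, since $\mo{X} \in \ms{K}$ gives $\mo{X}^+ \in \ms{K}^+ \subseteq HSP(\ms{K}^+)$. For $\ms{K}' \subseteq \ms{K}$ I would use the dictionary in the other direction: from $\ms{K} = \{ \mo{X} \mid \mo{X}^+ \in \Alg\mathsf{Ax} \}$ we get $\ms{K}^+ \subseteq \Alg\mathsf{Ax}$, and since $\Alg\mathsf{Ax}$ is a variety (Lemma~\ref{lem:alg-axiomatic-variety}) it is closed under $H$, $S$ and $P$, whence $HSP(\ms{K}^+) \subseteq \Alg\mathsf{Ax}$. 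Therefore $\mo{X}^+ \in HSP(\ms{K}^+)$ forces $\mo{X}^+ \in \Alg\mathsf{Ax}$, i.e.\ $\mo{X} \Vdash \mathsf{Ax}$, i.e.\ $\mo{X} \in \ms{K}$.

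The computational content is light: Birkhoff's theorem and Lemma~\ref{lem:alg-axiomatic-variety} do the real work, and the two inclusions are short once the dictionary is in place. The one point that needs genuine care---and the closest thing to an obstacle---is the bookkeeping around Definition~\ref{def:frm-axiomatic}: establishing that defining a frame class by a \emph{set of formulas} coincides with defining it by the \emph{theorems} of the associated logic, which is precisely the soundness half transferred from the algebraic completeness of Theorem~\ref{thm:pdt}.
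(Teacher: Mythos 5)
Your proposal is correct and follows essentially the same route as the paper: the forward direction rests on the fact that $H$, $S$ and $P$ preserve validity (which you phrase equivalently as $\Alg\mathsf{Ax}$ being a variety containing $\ms{K}^+$), and the converse is Birkhoff's variety theorem read back through the dictionary $\mo{X} \Vdash \phi$ iff $\mo{X}^+ \models \phi$ of Lemma~\ref{lem:frm-to-alg}. Your extra care about reconciling ``satisfies $\mathsf{Ax}$'' with ``satisfies the theorems of $\CK \oplus \mathsf{Ax}$'' via Theorem~\ref{thm:pdt} is a detail the paper leaves implicit, but it does not change the argument.
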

\begin{proof}
  Suppose $\ms{K}$ is axiomatic, for the set of formulas $\Phi$.
  Then it follows from Lemma~\ref{lem:frm-to-alg} and the fact that $H$,
  $S$ and $P$ preserve validity of formulas that
  \eqref{eq:mdv} holds.
  Conversely, suppose \eqref{eq:mdv} holds.
  Since $HSP(\ms{K}^+)$ is a variety, Birkhoff's variety theorem
  states that it is of the form $\Alg\Phi$.
  It follows that $\ms{K}$ is axiomatic for $\Phi$.
\end{proof}

  We now have all the ingredients to prove a $\CK$-analogue of the Goldblatt-Thomason theorem.

\begin{theorem}\label{thm:gt}
  Let $\ms{K} \subseteq \CKFrm$ be closed under 
  segment extensions.
  Then $\ms{K}$ is axiomatic if and only if $\ms{K}$ reflects 
  segment extensions and is closed under disjoint
  unions, generated subframes and bounded morphic images.
\end{theorem}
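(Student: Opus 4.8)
The plan is to prove both directions using the dual equivalence of Section~\ref{subsec:dual} to transport Birkhoff's theorem across the duality, with the algebra/frame reduction already packaged in Lemma~\ref{lem:mdv}.

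For the forward direction, suppose $\ms{K}$ is axiomatic, say $\ms{K} = \{ \mo{X} \in \CKFrm \mid \mo{X} \Vdash \Phi \}$ for a set $\Phi$ of formulas. Closure under disjoint unions, generated subframes and bounded morphic images is then immediate from Proposition~\ref{prop:ax-class-1}. For reflection of segment extensions, I would take any $\mo{X}$ with $\seg\mo{X} \in \ms{K}$, i.e.\ $\seg\mo{X} \Vdash \Phi$, and invoke Lemma~\ref{lem:seg-ext-truth}(3) to get $\mo{X} \Vdash \phi$ for each $\phi \in \Phi$; hence $\mo{X} \in \ms{K}$. Note the standing hypothesis that $\ms{K}$ is closed under segment extensions is not needed here—it is a genuine extra assumption, mirroring closure under ultrafilter extensions in the classical theorem, and will be consumed in the converse.

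For the backward direction, by Lemma~\ref{lem:mdv} it suffices to establish the inclusion $\{ \mo{X} \mid \mo{X}^+ \in HSP(\ms{K}^+) \} \subseteq \ms{K}$, the reverse inclusion being trivial. So I fix $\mo{X}$ with $\mo{X}^+ \in HSP(\ms{K}^+)$. By the standard description of a generated variety, there are frames $\mo{G}_i \in \ms{K}$ ($i \in I$), a subalgebra $\alg{B} \leq \prod_{i \in I} \mo{G}_i^+$, and a surjective homomorphism $g : \alg{B} \twoheadrightarrow \mo{X}^+$. Putting $\mo{G} = \coprod_{i \in I} \mo{G}_i$, closure of $\ms{K}$ under disjoint unions gives $\mo{G} \in \ms{K}$, and Lemma~\ref{lem:prod-coprod} gives $\prod_i \mo{G}_i^+ \cong \mo{G}^+$; thus $\alg{B}$ is, up to isomorphism, a subalgebra of $\mo{G}^+$, with inclusion $\iota : \alg{B} \hookrightarrow \mo{G}^+$. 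Dualizing, I apply $(\cdot)_*$ to obtain general bounded morphisms $\iota_* : (\mo{G}^+)_* \to \alg{B}_*$ and $g_* : (\mo{X}^+)_* \to \alg{B}_*$, with $\iota_*$ surjective and $g_*$ an embedding. Forgetting admissible structure via $\kappa$, and using $\kappa(\mo{G}^+)_* = \seg\mo{G}$ and $\kappa(\mo{X}^+)_* = \seg\mo{X}$, this produces a surjective bounded morphism $\seg\mo{G} \twoheadrightarrow \kappa\alg{B}_*$ and an embedding $\seg\mo{X} \hookrightarrow \kappa\alg{B}_*$. Now I chase the closure conditions: $\seg\mo{G} \in \ms{K}$ because $\mo{G} \in \ms{K}$ and $\ms{K}$ is closed under segment extensions; then $\kappa\alg{B}_* \in \ms{K}$ as a bounded morphic image; then $\seg\mo{X} \in \ms{K}$ as a generated subframe; and finally $\mo{X} \in \ms{K}$ since $\ms{K}$ reflects segment extensions. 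This yields the missing inclusion, and $\ms{K}$ is axiomatic.

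The hard part will be the dualization step: I must justify that the dual of an injective homomorphism is a surjective general bounded morphism, that the dual of a surjective homomorphism is an embedding, and—crucially—that after applying $\kappa$ these become a genuine bounded morphic image and a genuine generated subframe in $\CKFrm$. The cleanest route is to observe that $(\cdot)_*$ is half of a dual equivalence, hence interchanges (regular) monomorphisms and (regular) epimorphisms; it then remains to identify regular monos and epis in $\CKDescr$ with embeddings and surjective general bounded morphisms, and to check that discarding the admissible sets sends these to generated-subframe inclusions and onto maps of CK-frames. Alternatively one argues concretely from the explicit formula $h_*(\ff{p}', \Gamma') = (h^{-1}(\ff{p}'), h^{-1}[\Gamma'])$ together with Lemma~\ref{lem:bm_props}, reading surjectivity of $\iota_*$ off the boundedness clauses and injectivity of $g_*$ off the surjectivity of $g$. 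Everything else—the universal-algebra decomposition, Lemma~\ref{lem:prod-coprod}, and the bookkeeping of closure conditions—is routine.
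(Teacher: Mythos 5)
Your proposal is correct and follows essentially the same route as the paper: the forward direction via Proposition~\ref{prop:ax-class-1} and Lemma~\ref{lem:seg-ext-truth}, and the converse via Lemma~\ref{lem:mdv}, the $HSP$ decomposition, Lemma~\ref{lem:prod-coprod}, dualization of the injection and surjection, and the same chase through the closure conditions. The ``hard part'' you flag --- that duals of injective homomorphisms are surjective bounded morphisms, duals of surjective homomorphisms are generated-subframe embeddings, and that this survives forgetting the admissible sets --- is likewise left implicit in the paper's diagram, so your explicit acknowledgment of it is, if anything, more careful than the original.
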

\begin{proof}
  The implication from left to right follows from the fact that axiomatic classes
  are closed under disjoint unions, generated subframes, bounded morphic images
  (Proposition~\ref{prop:ax-class-1}) and reflect segment extensions
  (Lemma~\ref{lem:seg-ext-truth}).

  For the converse, by Lemma~\ref{lem:mdv} it suffices to prove that
  $\ms{K} = \{ \mo{X} \in \CKFrm \mid \mo{X}^+ \in HSP(\ms{K}^+) \}$.
  So let $\mo{X} = (X, \exp, \leq, R) \in \CKFrm$ and
  suppose $\mo{X}^+ \in HSP(\ms{K}^+)$.
  Then there are $\mo{Z}_i \in \ms{K}$ such that $\mo{X}^+$ is the
  homomorphic image of a subalgebra $\alg{A}$ of the
  product of the $\mo{Z}_i^+$.
  In a diagram:
  \begin{center}
    \begin{tikzcd}
      \mo{X}^+
        & [2.5em]
          \alg{A}
            \arrow[l, ->>, "\text{surjective}" {above,pos=.46}]
            \arrow[r, >->, "\text{injective}"]
        & [2.5em]
          \prod \mo{Z}_i^+
    \end{tikzcd}
  \end{center}
  Since $\prod\mo{Z}_i^+ \cong (\coprod \mo{Z}_i)^+$ by Lemma~\ref{lem:prod-coprod},
  dually this yields
  \begin{center}
    \begin{tikzcd}
      (\mo{X}^+)_+
            \arrow[r, >->, "\text{gen.~subframe}"]
        & [5em]
          \alg{A}_+
        & [8em]
          \big(\big(\coprod \mo{Z}_i\big)^+\big)_+
            \arrow[l, ->>, "\text{bounded morphic image}" {above,pos=.46}]
    \end{tikzcd}
  \end{center}
  We have $\coprod \mo{Z}_i \in \ms{K}$ because $\ms{K}$ is closed under coproducts,
  and $\big((\coprod \mo{Z}_i)^+\big)_+ \in \ms{K}$ because $\ms{K}$ is
  closed under segment extensions.
  Then $\alg{A}_+ \in \ms{K}$ and $\seg\mo{X} = (\mo{X}^+)_+ \in \ms{K}$ because 
  $\ms{K}$ is closed under bounded morphic images and generated subframes.
  Finally, since $\ms{K}$ reflects segment extensions we find
  $\mo{X} \in \ms{K}$.
\end{proof}

\section{Conclusion}

  We have continued the semantic study of the logic $\CK$ by providing a duality
  theorem and using this to obtain analogues of Sahlqvist results and of the
  Goldblatt-Thomason theorem. This paves the way for further
  investigation of the semantics of $\CK$ and related logics.
  In particular, we are interested in the following questions:

\begin{description}
  \item[Extending the class of Sahlqvist formulas]
        While the class of Sahlqvist formulas from Definition~\ref{def:Sahl}
        covers a wide variety of examples, it does not admit the use of
        the diamond operator in the antecedent.
        The difficulty in handling diamonds is caused by the $\forall\exists$-pattern
        of its interpretation. It would be interesting to investigate
        whether we can ease this restriction, allowing (some appearances of)
        diamonds in the antecedent.
  \item[Different dualities]
        As we have seen, descriptive frames for $\CK$ are necessarily based on
        preordered sets, rather than partially ordered ones.
        However, there appears to be a choice in how big we make our clusters,
        i.e.~which segments we admit.
        In Section~\ref{sec:duality} we opted for the largest possible
        solution: we use all segments. An advantage of this is that we can
        define bounded morphisms between (descriptive) frames as usual.
        On the other hand, as we saw in Section~\ref{subsec:Sahl-compl}
        this choice of descriptive frames forced us to add an extra
        construction to the proof of Sahlqvist canonicity.
        It would be interesting to see if there exists a middle ground
        between these competing interests.
  \item[Expressivity of the diamond-free fragment]
  Many developments of intuitionistic modal logic eschew the $\Diamond$ modality,
  proceeding with $\Box$ alone. Hence, there may be value in expressivity results, such
  as a Goldblatt-Thomason style theorem, for the logic without $\Diamond$. In particular the
  comparison of these results for the logics with and without $\Diamond$ would help us
  understand when $\Diamond$ is in fact necessary to capture frame conditions of interest.
  However the problem of the axiomatisation of the $\Diamond$-free fragment of Intuitionistic
  $\log{K}$~\cite{DasMar23} will not be resolved so easily, because there the question is
  the \emph{finite} axiomatisability of a logic, a considerably more difficult phenomenon to study.
\end{description}

{
\footnotesize
\bibliographystyle{plainnat}
\bibliography{modal-int.bib}
}

\end{document}